\newtheorem{thm}{Theorem}[section]
\newtheorem{lem}[thm]{Lemma}
\newtheorem{prop}[thm]{Proposition}
\newtheorem{rem}[thm]{Remark}
\theoremstyle{definition}
\numberwithin{equation}{section}
\renewcommand{\Re}{\hbox{Re}\,}
\renewcommand{\Im}{\hbox{Im}\,}
\newcommand{\C}{\mathbb{C}}
\newcommand{\R}{\mathbb{R}}
\newcommand{\supp}{\operatorname{supp}}
\newcommand{\Z}{\mathbb{Z}}
\def\hat{\widehat}
\def\tilde{\widetilde}
\def \bfo {\begin {eqnarray*} }
\def \efo {\end {eqnarray*} }
\def \ba {\begin {eqnarray*} }
\def \ea {\end {eqnarray*} }
\def \beq {\begin {eqnarray}}
\def \eeq {\end {eqnarray}}
\def \supp {\hbox{supp }}
\def \dist {\hbox{dist}}
\def \det {\hbox{det}}
\def \p {\partial}
\def\hat{\widehat}
\def\tilde{\widetilde}
\def \bfo {\begin {eqnarray*} }
\def \efo {\end {eqnarray*} }
\def \ba {\begin {eqnarray*} }
\def \ea {\end {eqnarray*} }
\def \beq {\begin {eqnarray}}
\def \eeq {\end {eqnarray}}
\def \supp {\hbox{supp }}
\def \dist {\hbox{dist}}
\def \det {\hbox{det}}
\def \p {\partial}
\begin{document}

 \title[inverse problems for polyharmonic operators]{Inverse boundary problems for polyharmonic operators with unbounded potentials}

\author[Krupchyk]{Katsiaryna Krupchyk}

\address
        {K. Krupchyk, Department of Mathematics\\
University of California, Irvine\\ 
CA 92697-3875, USA }

\email{katya.krupchyk@uci.edu}

\author[Uhlmann]{Gunther Uhlmann}

\address
       {G. Uhlmann, Department of Mathematics\\
       University of Washington\\
       Seattle, WA  98195-4350\\
       USA\\
       Department of Mathematics and Statistics \\
       University of Helsinki\\
         P.O. Box 68 \\
         FI-00014   Helsinki\\
         Finland}
\email{gunther@math.washington.edu}

\maketitle

\begin{abstract}
We show that the knowledge of the Dirichlet--to--Neumann map on the boundary of a bounded open set in $\R^n$ for the 
perturbed polyharmonic operator $(-\Delta)^m +q$ with $q\in L^{\frac{n}{2m}}$, $n>2m$, determines the potential $q$ in the set uniquely. In the course of the proof, we construct a special Green function for the polyharmonic operator and establish its mapping properties in suitable weighted $L^2$ and $L^p$ spaces. The $L^p$ estimates for  the special Green function are derived from $L^p$ Carleman estimates with linear weights for the polyharmonic operator.

\end{abstract}

\section{Introduction}

Let $\Omega\subset\R^n$ be a bounded open set with $C^\infty$ boundary, and let $(-\Delta)^m$, $m=1,2,\dots,$ be a polyharmonic operator. Let $q\in L^{\frac{n}{2m}}(\Omega)$ be a complex valued potential.  We shall assume throughout the paper that $n>2m$.  

 Let $\gamma$ be the Dirichlet trace operator, given by
\[
\gamma:H^{m}(\Omega)\to  \prod_{j=0}^{m-1}H^{m-j-1/2}(\p \Omega),\quad \gamma u=(u|_{\p\Omega},\p_{\nu}u|_{\p \Omega},\dots,\p_{\nu}^{m-1}u|_{\p \Omega}),
\]
which is bounded and surjective, see \cite[Theorem 9.5, p. 226]{Grubbbook2009}. Here and in what follows $H^s(\Omega)$ and $H^s(\p \Omega)$, $s\in \R$, are the standard $L^2$--based Sobolev spaces in $\Omega$ and its boundary $\p\Omega$, respectively, and  $\nu$ is the exterior unit normal to the boundary.  We shall also set 
\[
H^m_0(\Omega)=\{u\in H^m(\Omega): \gamma u=0\}.
\]
An application of the Sobolev embedding theorem shows that the operator of multiplication by $q$ is continuous:  $H^m_0(\Omega)\to H^{-m}(\Omega)$, and 
standard arguments, see Appendix \ref{sec_appendix}, imply that the operator 
\begin{equation}
\label{eq_int_1}
(-\Delta)^m+q: H^m_0(\Omega)\to H^{-m}(\Omega)=(H_0^m(\Omega))'
\end{equation}
is Fredholm  of index zero. Furthermore, the  operator in \eqref{eq_int_1}  has a discrete spectrum. 

We shall assume throughout the paper that 
\begin{itemize}
\item[\textbf{(A)}] $0$ is not in the spectrum of the operator \eqref{eq_int_1}.
\end{itemize}
It follows that  for $f=(f_0,\dots, f_{m-1})\in\prod_{j=0}^{m-1} H^{m-j -1/2}(\p \Omega)$, the Dirichlet problem, 
\begin{equation}
\label{eq_int_2}
\begin{aligned}
((-\Delta)^m+q)u&=0\quad\textrm{in}\quad \Omega,\\
\gamma u&=f \quad\textrm{on}\quad \p\Omega,
\end{aligned}
\end{equation}
has a unique solution $u\in H^m(\Omega)$. We introduce the Dirichlet--to--Neumann map by
\[
\langle \Lambda_q f, \overline{h} \rangle=\sum_{|\alpha|=m}  \frac{m!}{\alpha!} \int_\Omega  D^\alpha u \overline{D^\alpha v} dx + \int_\Omega u q\overline{v} dx,
\]
where 
$h\in \prod_{j=0}^{m-1}H^{m-j-1/2}(\p \Omega)$, and $v\in H^m(\Omega)$ is such that $\gamma v=h$.  
It is shown in Appendix \ref{sec_appendix} that $\Lambda_q$ is well defined and 
\[
\Lambda_q: \prod_{j=0}^{m-1} H^{m-j -1/2}(\p \Omega)\to \bigg(\prod_{j=0}^{m-1} H^{m-j -1/2}(\p \Omega)\bigg)'=\prod_{j=0}^{m-1} H^{-m+j +1/2}(\p \Omega)
\]
is a linear continuous map.  Notice that when $m=1$, we recover the standard Dirichlet--to--Neumann map for the Schr\"odinger equation, given by
\begin{equation}
\label{eq_int_DN_map_Schr}
H^{1/2}(\p \Omega)\ni f\mapsto  \p_\nu u|_{\p\Omega}\in H^{-1/2}(\p \Omega).
\end{equation}

The inverse boundary value problem for the perturbed polyharmonic operator \eqref{eq_int_1} is to determine a potential $q$ in $\Omega$ from the knowledge of the Dirichlet--to--Neumann map $\Lambda_q$. 

This problem has been studied extensively in the case of the Schr\"odinger operator, i.e.  when $m=1$. It was shown in \cite{Sylvester_Uhlmann_1987}  that a potential $q\in L^\infty(\Omega)$ is uniquely  determined in $\Omega$ from the knowledge of the Dirichlet--to--Neumann map \eqref{eq_int_DN_map_Schr} for the Schr\"odinger equation.  
The proof of this result in \cite{Sylvester_Uhlmann_1987} is based on a construction of complex geometric optics solutions to the Schr\"odinger equation, with an $L^2$ control of the remainder. The proof also goes through for some unbounded potentials, e.g. $q\in L^{n}(\Omega)$.  
In \cite{Lavine_Nachman}  a global uniqueness result was established for $q\in L^{n/2}(\Omega)$, following an earlier result of  \cite{Chanillo_1990}  for $q\in L^{n/2+\varepsilon}(\Omega)$, $\varepsilon >0$.  It turns out that $L^2$  methods are no longer sufficient in the proofs in \cite{Chanillo_1990}  and \cite{Lavine_Nachman}, and it becomes essential to control the remainders of complex geometric optics solutions in suitable $L^p$ spaces.  Inverse boundary value problems for the Schr\"odinger equation on certain classes of manifolds were studied in  \cite{Dos_Santos_F_Kenig_Salo_Uhlmann} and \cite{Dos_Santos_F_Kenig_Salo_unbound}, in the case of  $L^\infty$  and $L^{n/2}$ potentials, respectively.

Turning our attention to the case of polyharmonic operators, let us remark that the areas of physics and geometry where such operators occur, include  the
study of the Kirchhoff plate equation in the theory of elasticity, and the study of the Paneitz-Branson operator in conformal geometry, see \cite{Gazzola_book}.   It was shown in \cite{Isakov_1991} that  a potential $q\in L^\infty(\Omega)$ can be uniquely recovered from the knowledge of the Dirichlet--to--Neumann map $\Lambda_q$ for the perturbed biharmonic equation, i.e. $m=2$.  In \cite{Ikehata_1991} an alternative approach to this problem  was developed and the uniqueness result was extended to $q\in  L^{n/2}(\Omega)$, $n>4$.  The proofs in \cite{Isakov_1991} and \cite{Ikehata_1991}  rely upon $L^2$  methods only.  Inverse spectral problems for a potential perturbation of the polyharmonic operator were studied in \cite{Krupchyk_Paivarinta},  and  inverse boundary value problems for a first order perturbation of the polyharmonic operator were addressed in \cite{Krupchyk_Lassas_Uhlmann_Funct, Krupchyk_Lassas_Uhlmann_Trans}, again using $L^2$  techniques.

The purpose of this paper is to study the problem of recovering a potential $q\in L^{\frac{n}{2m}}(\Omega)$, $n>2m$, from the Dirichlet--to--Neuman map $\Lambda_q$, associated to the perturbed polyharmonic operator $(-\Delta)^m+q$. This can be viewed as an extension of the study of \cite{Lavine_Nachman} for the Schr\"odinger equation to  the case of the polyharmonic equation.  

The assumption $q\in L^{\frac{n}{2m}}(\Omega)$, $n>2m$, seems natural as it guarantees that the strong unique continuation principle holds for the operator $(-\Delta)^m+q$, see \cite{Jerison_Kenig_1985}  in the case $m=1$, and  \cite{Laba_1988}  in the case $m\ge 2$.  Furthermore, it is known that  this condition is optimal in the class of $L^p$ potentials, see \cite{Jerison_Kenig_1985}. 

While the operator of multiplication by  $q\in L^\infty(\Omega)$ is continuous: $H^m(\Omega)\to L^2(\Omega)$, taking  $q\in L^{\frac{n}{2m}}(\Omega)$, we get a continuous operator: $H^m(\Omega)\to L^{\frac{2n}{n+2m}}(\Omega)$. Here  $  L^2(\Omega)\subset L^{\frac{2n}{n+2m}}(\Omega) $, and therefore, when constructing complex geometric optics solutions for the perturbed polyharmonic operator, it becomes crucial to control the remainders not only in $L^2(\Omega)$ but also in $L^{\frac{2n}{n-2m}}(\Omega)$, which is  the dual space of  $L^{\frac{2n}{n+2m}}(\Omega)$.

The following uniqueness result is the main result of this paper. 
\begin{thm}
\label{thm_main}
Let $q_1,q_2\in L^{\frac{n}{2m}}(\Omega)$, $n>2m$, and assume that \emph{(A)} holds for $(-\Delta)^m+q_j$, $j=1,2$.  If $\Lambda_{q_1}=\Lambda_{q_2}$, then $q_1=q_2$ in $\Omega$. 
\end{thm}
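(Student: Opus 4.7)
The overall strategy is the by-now classical one: derive an integral identity from $\iL_{q_1}=\iL_{q_2}$, plug in two families of complex geometric optics (CGO) solutions with a free parameter, let the parameter tend to infinity, and recognize the resulting identity as the vanishing of the Fourier transform of $q_1-q_2$.

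The first step is the boundary-integral identity. From $\iL_{q_1}=\iL_{q_2}$, together with the symmetry of the sesquilinear form defining $\iL_q$ shown in the appendix, one obtains
\[
\int_\iO (q_1-q_2)\, u_1\,\overline{u_2}\, dx =0
\]
for every $u_1\in H^m(\iO)$ solving $((-\Delta)^m+q_1)u_1=0$ and every $u_2\in H^m(\iO)$ solving $((-\Delta)^m+\overline{q_2})u_2=0$. Thus, to conclude $q_1\equiv q_2$, it suffices to exhibit sufficiently many pairs $(u_1,u_2)$ whose product $u_1\overline{u_2}$ approximates $e^{-ix\cdot\xi}$ for arbitrary $\xi\in\R^n$.

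The second, and main, step is the construction of such CGO solutions of the form $u=e^{x\cdot\zeta}(1+r)$, where $\zeta\in\C^n$ satisfies $\zeta\cdot\zeta=0$ and $|\zeta|$ is large. Conjugating the operator, the remainder $r$ must solve
\[
e^{-x\cdot\zeta}(-\Delta)^m e^{x\cdot\zeta} r = -q -qr .
\]
Inverting the conjugated polyharmonic operator is precisely what the ``special Green function'' advertised in the abstract is designed to do: one uses it to produce a right inverse $G_\zeta$ with norm bounds, both in weighted $L^2$ and, crucially, from $L^{\frac{2n}{n+2m}}(\iO)$ to $L^{\frac{2n}{n-2m}}(\iO)$, with operator norm tending to $0$ as $|\zeta|\to\infty$. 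The $L^p$ bound is obtained via the $L^p$ Carleman estimates with linear weights stated in the paper. Since multiplication by $q\in L^{n/2m}$ is bounded $L^{\frac{2n}{n-2m}}\to L^{\frac{2n}{n+2m}}$ by H\"older, a Neumann-series/contraction argument in $L^{\frac{2n}{n-2m}}(\iO)$ produces a unique small solution $r$, again with $\|r\|_{L^{\frac{2n}{n-2m}}}\to 0$ as $|\zeta|\to\infty$.

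The third step is to pick, for each $\xi\in\R^n$, two families $\zeta_1(\tau),\zeta_2(\tau)\in\C^n$ with $\zeta_j\cdot\zeta_j=0$, $\zeta_1+\zeta_2=-i\xi$, and $|\zeta_j|\to\infty$ as $\tau\to\infty$ (the standard Sylvester--Uhlmann choice, available because $n\ge 2$). Substituting $u_1,u_2$ into the orthogonality identity and expanding,
\[
\int_\iO (q_1-q_2)e^{-ix\cdot\xi}(1+r_1)(1+\overline{r_2})\,dx=0.
\]
The leading term gives the Fourier transform of $q_1-q_2$; the error terms are controlled by H\"older's inequality, the $L^{n/2m}$ bound on $q_1-q_2$, and the decay of $\|r_j\|_{L^{\frac{2n}{n-2m}}}$, and vanish as $\tau\to\infty$. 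Hence $\widehat{q_1-q_2}(\xi)=0$ for all $\xi$, yielding $q_1=q_2$.

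The principal obstacle is the $L^p$ mapping properties of the special Green function $G_\zeta$; the $L^2$ estimate alone is insufficient once $q$ is unbounded, and the proof requires genuine $L^p$ Carleman inequalities with linear weights for the polyharmonic operator, together with a careful handling of the inhomogeneous boundary data so that $G_\zeta$ actually returns functions in $H^m(\iO)$ to which the boundary integral identity applies. A secondary but nontrivial point is justifying the integration-by-parts identity for the rough potentials $q_j\in L^{n/2m}$, which is the role of the appendix.
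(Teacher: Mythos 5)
Your overall architecture (integral identity, CGO solutions with $\zeta_1+\overline{\zeta_2}=i\xi$, passage to the limit) is the same as the paper's, but there is a genuine gap in the second step, and it propagates into the third. You claim that a contraction/Neumann-series argument in $L^{\frac{2n}{n-2m}}$ produces $r$ with $\|r\|_{L^{\frac{2n}{n-2m}}}\to 0$ as $|\zeta|\to\infty$. This does not follow from the available estimates: the bound $\|G_\zeta^{(m)}\|_{L^{\frac{2n}{n+2m}}\to L^{\frac{2n}{n-2m}}}\le C$ coming from the uniform Sobolev/Carleman inequality is only \emph{uniform} in $\zeta$ — it has no decay as $|\zeta|\to\infty$ — so the operator $r\mapsto G_\zeta^{(m)}(qr)$ has norm $\lesssim \|q\|_{L^{n/2m}}$ on $L^{\frac{2n}{n-2m}}$, which is neither small nor tending to zero. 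The only estimate that decays in $|\zeta|$ is the weighted $L^2$ estimate $\|G_\zeta^{(m)}f\|_{L^2_\sigma}\le C|\zeta|^{-m}\|f\|_{L^2_{\sigma+2m-1}}$, and that one cannot absorb an unbounded $q$. The paper resolves this by writing $q=d_1d_2$ with $d_j=|q|^{1/2}\in L^{n/m}$, truncating $d_j$ to bounded functions $d_{j,\tau}$, and showing $\|d_2G_\zeta^{(m)}d_1\|_{L^2\to L^2}\to 0$ by using the decaying weighted $L^2$ estimate on the truncated (bounded, compactly supported) pieces and the uniform $L^p$ estimate on the pieces with small $L^{n/m}$ norm; the Neumann series is then run in $L^2(\R^n)$, and the conclusion is only $\|r\|_{L^{\frac{2n}{n-2m}}}=\mathcal{O}(1)$ together with $\|r\|_{L^2(K)}\to 0$ on compact sets — \emph{not} decay in $L^{\frac{2n}{n-2m}}$.

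Consequently your final limiting argument, which leans on "the decay of $\|r_j\|_{L^{\frac{2n}{n-2m}}}$", is not justified as stated. The correct way to kill the error terms $\int(q_2-q_1)e^{i\xi\cdot x}(r_1+\overline{r_2}+r_1\overline{r_2})\,dx$ is to approximate $q_2-q_1$ in $L^{n/2m}$ by a bounded $q^\sharp$: the $q^\sharp$ part is handled by $\|r_j\|_{L^2(\Omega)}\to 0$, and the part with small $L^{n/2m}$ norm is handled by H\"older together with the mere $\mathcal{O}(1)$ bound on $\|r_j\|_{L^{\frac{2n}{n-2m}}}$. This double use of approximation by bounded functions is the essential mechanism that makes the unbounded-potential case work; without it your argument does not close. (Two minor points: you should take $\zeta_1+\overline{\zeta_2}=i\xi$, not $\zeta_1+\zeta_2=-i\xi$, since it is $u_1\overline{u_2}$ that appears in the identity; and the $H^m(\Omega)$ membership of the CGO solutions is obtained by interior elliptic regularity on a slightly larger domain, not by any adjustment of boundary data.)
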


In the case $m=1$ this result is due to \cite{Lavine_Nachman}.  In the proof of Theorem \ref{thm_main} we shall follow the method of \cite{Lavine_Nachman}, which we learned from the work \cite{Dos_Santos_F_Kenig_Salo_unbound}.

The crucial role in \cite{Lavine_Nachman}, following \cite{Sylvester_Uhlmann_1987} and \cite{Faddeev_1965},  is played by the  fundamental solution  
\[
g_\zeta^{(1)}=\mathcal{F}^{-1} \bigg( \frac{1}{p_\zeta (\xi)}\bigg)\in \mathcal{S}'(\R^n).
\] 
of the conjugated Laplacian $e^{-x\cdot\zeta} (-\Delta) e^{x\cdot\zeta}=-\Delta-2\zeta\cdot \nabla$ with $\zeta\in \C^n\setminus\{0\}$, $\zeta\cdot\zeta=0$. 
 Here $p_\zeta(\xi)=|\xi|^2-2i\zeta\cdot\xi$ is the symbol of the operator, and $\mathcal{F}$ is the Fourier transformation.  
 The argument of  \cite{Lavine_Nachman} consists of two main ingredients.  The first one is the use of two fundamental estimates for the convolution operator $G_\zeta^{(1)}f=g_\zeta^{(1)}*f$, in suitable weighted $L^2$ and $L^p$ spaces. Such estimates have been established in \cite{Sylvester_Uhlmann_1987} and  \cite{Kenig_Ruiz_Sogge_1987}, respectively, see also \cite{Chanillo_1990}.  The second ingredient is an approximation of an $L^p$ function by  a sequence of $L^\infty$ functions.

To follow the method of  \cite{Lavine_Nachman}, a tempered fundamental solution of the conjugated polyharmonic operator $e^{-x\cdot\zeta} (-\Delta)^m e^{x\cdot\zeta}=(-\Delta-2\zeta\cdot \nabla)^m$ with $\zeta\in \C^n\setminus\{0\}$, $\zeta\cdot\zeta=0$, with good mapping properties of the corresponding convolution operator in appropriate weighted $L^2$ and $L^p$ spaces, should be available.

A new difficulty in the construction of such a fundamental solution, compared with the case of the Laplacian, arises since, while $1/p_\zeta(\xi)\in \mathcal{S}'(\R^n)\cap L^1_{\text{loc}}(\R^n)$, we have $1/(p_\zeta(\xi))^m\notin L^1_{\text{loc}}(\R^n)$ for $m\ge 2$, and therefore, it cannot be viewed as a distribution directly.  Here we find a way to regularize $1/(p_\zeta(\xi))^m$ and  obtain the following result, where $L^2_\sigma (\R^n)$, $\sigma\in \R$, stands  for the weighted $L^2$ space with the norm
\begin{equation}
\label{eq_int_weighted_L_2}
\|f\|_{L^2_\sigma}=\bigg( \int_{\R^n} (1+|x|^2)^\sigma |f(x)|^2dx\bigg)^{1/2}.
\end{equation}
\begin{thm}
\label{thm_main_2}
Let $m=1, 2, 3,\dots$, and let $\zeta\in \C^n\setminus\{0\}$ be such that $\zeta\cdot\zeta=0$. There exists  $g_\zeta^{(m)}\in \mathcal{S}'(\R^n)$  such that 
\[
(-\Delta-2\zeta\cdot \nabla)^m g_\zeta^{(m)}=\delta,
\]  
and such that the operator $G_\zeta^{(m)}: \mathcal{S}(\R^n)\to \mathcal{S}'(\R^n)$,  defined by
\[
G_\zeta^{(m)} f=g_\zeta^{(m)}*f, 
\]
enjoys the following properties: 
\begin{itemize}
\item[(i)] 
For $|\zeta|\ge s_0>0$, the operator
 $G^{(m)}_{\zeta}$
can be extended to a bounded operator $G_\zeta^{(m)}: L^2_{\sigma+2m-1}(\R^n)\to L^2_{\sigma}(\R^n)$, $-m<\sigma<1-m$, such that
\begin{equation}
\label{eq_thm_2_est_1}
\|G^{(m)}_\zeta f\|_{L^2_\sigma}\le \frac{C}{|\zeta|^m}\|f\|_{L^2_{\sigma+2m-1}}.
\end{equation}
\item[(ii)]
$G_\zeta^{(m)}$ extends  to a bounded operator $G_\zeta^{(m)}: L^{\frac{2n}{n+2m}}(\R^n)\to L^{\frac{2n}{n-2m}}(\R^n)$ with the bound
\begin{equation}
\label{eq_thm_2_est_2}
\|G_\zeta^{(m)}f\|_{L^{\frac{2n}{n-2m}}}\le C\|f\|_{L^{\frac{2n}{n+2m}}},
\end{equation}
uniformly in $\zeta$. When $f\in L^{\frac{2n}{n+2m}}(\R^n)$,  the function $w=G_\zeta^{(m)}f$  is the unique $L^{\frac{2n}{n-2m}}(\R^n)$ -- solution of the equation
$
(-\Delta-2\zeta\cdot \nabla)^m w= f$.
\end{itemize}
\end{thm}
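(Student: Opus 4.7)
My plan is to construct $g_\zeta^{(m)}$ by regularizing the formal frequency multiplier $1/p_\zeta(\xi)^m$, then to derive (i) by iterating the known weighted $L^2$ estimate of Sylvester--Uhlmann for $G_\zeta^{(1)}$ together with a weight-shifting argument, and (ii) from the $L^p$ Carleman estimates with linear weights for the polyharmonic operator announced in the abstract. The key structural observation, used throughout, is that under the assumption $\zeta\cdot\zeta=0$ the zero set $\Sigma_\zeta=\{\xi\in\R^n:p_\zeta(\xi)=0\}$ is a codimension-two real submanifold of $\R^n$ along which $p_\zeta$ vanishes simply.

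The construction of $g_\zeta^{(m)}$ comes first. Because $p_\zeta$ vanishes only to first order on the codimension-two set $\Sigma_\zeta$, the reciprocal $1/p_\zeta$ is locally integrable---which is what lets the Faddeev case $m=1$ go through by direct Fourier inversion---but $1/p_\zeta^m$ is not locally integrable for $m\ge 2$. I introduce tubular coordinates around a smooth point of $\Sigma_\zeta$, expand $p_\zeta$ in the two transverse directions, and regularize $1/p_\zeta^m$ by a Hadamard finite-part prescription: subtract the non-integrable Taylor terms in the transverse variables and define the remainder as a principal value. Patching with a smooth partition of unity and adding the locally smooth contribution from outside a tubular neighbourhood of $\Sigma_\zeta$ yields a tempered distribution $\tilde m_\zeta$ on $\R^n$ with $p_\zeta^m\,\tilde m_\zeta=1$ in $\mathcal{S}'(\R^n)$. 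Setting $g_\zeta^{(m)}:=\mathcal{F}^{-1}\tilde m_\zeta$ produces a tempered fundamental solution of $(-\Delta-2\zeta\cdot\nabla)^m$.

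For (i), I would realize $G_\zeta^{(m)}$ as the $m$-fold composition of $G_\zeta^{(1)}$ chained through a sequence of intermediate weighted $L^2$ spaces, choosing the intermediate weights so that each step falls in the admissible range $-1<\delta<0$ of the Sylvester--Uhlmann estimate $\|G_\zeta^{(1)} u\|_{L^2_\delta}\le C|\zeta|^{-1}\|u\|_{L^2_{\delta+1}}$. The successive admissible ranges do not directly abut, and the extra $m-1$ units of weight in the statement ($2m-1$ total, against $m$ from a naive count) are precisely what is needed to reconcile them: each iterate is followed by a shift of less than one unit of weight, which is handled by splitting into a near-field part bounded on a ball via the embedding $L^2_{\delta'}\hookrightarrow L^2_\delta$, and a far-field part where $p_\zeta$ is comparable to $|\xi|^2$ and a direct multiplier estimate applies. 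Composing $m$ such steps yields the asserted $|\zeta|^{-m}$ gain on the full range $-m<\sigma<1-m$. For (ii), the endpoint bound \eqref{eq_thm_2_est_2} is equivalent, by duality, to an $L^p$ Carleman estimate with linear weight for $(-\Delta)^m$ on Schwartz functions---the polyharmonic analogue of the Kenig--Ruiz--Sogge estimate for the Laplacian. The uniqueness claim follows by Fourier analysis: any $w\in L^{\frac{2n}{n-2m}}(\R^n)$ annihilated by $(-\Delta-2\zeta\cdot\nabla)^m$ has $\hat w$ supported on $\Sigma_\zeta$ with transverse order at most $m-1$; representing $w$ as an oscillatory layer integral over the codimension-two set $\Sigma_\zeta$ and applying the $L^{\frac{2n}{n-2m}}$ membership forces $w=0$.

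The main obstacle is (ii). Iteration of the Kenig--Ruiz--Sogge estimate for $G_\zeta^{(1)}$ is not available, because the intermediate Lebesgue exponents after $k<m$ iterates fall outside its admissible Stein--Tomas range. The required endpoint bound must come from a genuine $L^p$ Carleman estimate for $(-\Delta)^m$ with linear weight, proved by oscillatory integral techniques adapted directly to the higher-order symbol $p_\zeta^m$. This new $L^p$ Carleman estimate is the principal analytic novelty of the paper.
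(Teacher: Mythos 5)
Your construction of $g_\zeta^{(m)}$ is in the same spirit as the paper's: the paper also regularizes $1/p_\zeta^m$ near the codimension-two characteristic set $\Sigma_\zeta$ by passing to coordinates $(\eta_1,\eta_j)$ in which $p_\zeta=s(\eta_j+i\eta_1)$ and replacing $1/(\eta_j+i\eta_1)^m$ by the distributional derivative $\frac{(-1)^{m-1}}{(m-1)!}\p_{\eta_j}^{m-1}\big(\frac{1}{\eta_j+i\eta_1}\big)$, which is the finite-part prescription you describe. Your treatment of (ii) and of uniqueness is also essentially the paper's route, with one caveat: the linear-weight $L^p$ Carleman estimate for $(-\Delta)^m$ is not obtained here by a new oscillatory-integral analysis of $p_\zeta^m$; it is deduced softly, following Wolff, as a translation/scaling limit of the Jerison--Kenig logarithmic-weight estimates, and the passage from the a priori estimate to \eqref{eq_thm_2_est_2} requires checking that $G_\zeta^{(m)}$ preserves a dense subspace $X_\zeta=\{f:\hat f\in C_0^\infty(\R^n\setminus\Sigma_\zeta)\}$ of $L^{\frac{2n}{n+2m}}$, since it does not preserve $\mathcal{S}(\R^n)$.

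The genuine gap is in your proof of (i). The Sylvester--Uhlmann estimate maps $L^2_{\delta+1}\to L^2_\delta$ only for $-1<\delta<0$, so its output sits at a \emph{negative} weight while its input must sit at a \emph{positive} weight. To chain two applications you would need a bounded map $L^2_{\delta_1}\to L^2_{\delta_2+1}$ with $\delta_1<0<\delta_2+1$, i.e.\ an inclusion that \emph{increases} the weight, and no such inclusion exists on $\R^n$; the extra $m-1$ units of weight placed on $f$ at the start cannot be recovered after the first application, because the estimate only records that the output lies in $L^2_{\delta_1}$. Your proposed repair conflates physical and frequency space: the region where $p_\zeta(\xi)\sim|\xi|^2$ is a frequency-space far field and says nothing about the spatial decay of the intermediate iterates, while the ``near-field on a ball'' device only helps for compactly supported functions, which the intermediate iterates are not. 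The paper avoids iteration entirely: it computes the inverse Fourier transform of the localized piece explicitly as $c_m s^{-m}\frac{x_2^{m-1}}{x_2+ix_1}\otimes\delta(x'')$ and proves a \emph{single} weighted convolution estimate (Lemma \ref{lem_SU_modified_second}), absorbing the polynomial factor via $|y_2|^{m-1}\lesssim |x'|^{m-1}+|x'-y'|^{m-1}$ into the output and input weights respectively; this is precisely what produces the shift $2m-1$ and the range $-m<\sigma<1-m$ (namely $-1<\sigma+m-1<0$). To make your part (i) correct you would need to replace the iteration by such a direct kernel estimate, or else prove a weighted $L^2$ bound for $G_\zeta^{(1)}$ valid on a full unit-length range of positive input weights, which is not available.
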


When $m=1$, this result is known thanks to the works  \cite{Sylvester_Uhlmann_1987} ,  \cite{Kenig_Ruiz_Sogge_1987}, and  \cite{Chanillo_1990}.

A fundamental result of  \cite{Hormander_1958} and \cite{Lojasiewicz_1959} establishes the existence of a  tempered fundamental solution for a general partial differential operator with constant coefficients. As our applications require estimates for the corresponding convolution operators in (weighted) Lebesgue spaces,  in the proof of Theorem \ref{thm_main_2} we proceed much more concretely and construct the distribution $g_\zeta^{(m)}$ essentially explicitly.    
  In the case of the  biharmonic operator, a tempered fundamental solution of $(-\Delta-2\zeta\cdot\nabla)^2$  is constructed and weighted $L^2$ estimates  are obtained in \cite{Ikehata_1991}.  Our construction is different and works for a general polyharmonic operator. 
We should also mention that in \cite{Isakov_1991} a regular (non tempered) fundamental solution of $(-\Delta-2\zeta\cdot\nabla)^2$ is used, see \cite[Theorem 10.2.1]{Hormander_book_2}, and local $L^2$ estimates are obtained. It seems that local $L^p$ estimates are not easily obtained for the corresponding convolution operator, and therefore, this approach is not pursued in this work.   

An important ingredient in the proof of  \eqref{eq_thm_2_est_1} is the weighted $L^2$ estimate for the operator $G_\zeta^{(1)}$, obtained in \cite{Sylvester_Uhlmann_1987}.  To prove the estimate \eqref{eq_thm_2_est_2} we use uniform $L^p$ Carleman estimates with linear weights for the polyharmonic operator. Since we did not find any reference for such estimates,  in Section \ref{sec_Carleman_estimates}  we follow \cite{Wolff_1993} and derive them as a consequence of the corresponding $L^p$ Carleman estimates with logarithmic weights for the polyharmonic operator, established in \cite{Jerison_Kenig_1985}.

The paper is organized as follows.  In Section \ref{sec_Carleman_estimates}  we discuss  $L^p$ Carleman estimates with linear weights for the polyharmonic operator.  Section \ref{sec_Green_function} is devoted to the construction of a tempered fundamental solution to the conjugated polyharmonic operator $e^{-x\cdot\zeta} (-\Delta)^m e^{x\cdot\zeta}$ and to the proof of Theorem \ref{thm_main_2}.  
Section \ref{sec_cgo} contains the construction of complex geometric optics solutions  to the perturbed polyharmonic equation  with a potential $q\in L^{\frac{n}{2m}}(\Omega)$.  The proof of Theorem \ref{thm_main} is then concluded in Section \ref{sec_proof_thm_main}.   Appendix \ref{sec_appendix} is concerned with the wellposedness of the Dirichlet problem for $(-\Delta)^m+q$ with $q\in L^{\frac{n}{2m}}(\Omega)$, and is included for the completeness of the presentation.

\section{$L^p$ Carleman estimates with linear weights for polyharmonic operators}

\label{sec_Carleman_estimates}

The purpose of this section is to present $L^p$ Carleman estimates with linear weights for polyharmonic operators, which generalize the corresponding estimates of  \cite{Kenig_Ruiz_Sogge_1987}, obtained for the Laplacian.

In the work  \cite{Wolff_1993}  it is explained how to obtain the $L^p$ Carleman estimates with linear weights for the Laplacian of \cite{Kenig_Ruiz_Sogge_1987} from the $L^p$ Carleman estimates with logarithmic weights, established in \cite{Jerison_Kenig_1985}.  The work  \cite{Jerison_Kenig_1985} contains also  $L^p$ Carleman estimates with logarithmic weights for polyharmonic operators, and  following \cite{Wolff_1993}, we shall  derive $L^p$ Carleman estimates with linear weights for polyharmonic operators from these  estimates. 

Let us start by formulating the following result of \cite{Jerison_Kenig_1985}.

\begin{thm} [\cite{Jerison_Kenig_1985}]
Let $m$ be an integer, $1\le m<n/2$,  and suppose that $p=\frac{2n}{n+2m}$ and $q=\frac{2n}{n-2m}$ (i.e. $\frac{1}{p}+\frac{1}{q}=1$ and $\frac{1}{p}-\frac{1}{q}=\frac{2m}{n}$).  Let  $t>\frac{n}{q}$ and $\delta=\text{dist}(t- \frac{n}{q},\Z)>0$. Then there exists a constant $C=C(\delta,n)$, depending only on $\delta$ and $n$, such that  for every $u\in C^\infty_0(\R^n\setminus\{0\})$,
\begin{equation}
\label{eq_JK}
\||x|^{-t} u\|_{L^{q}(\R^n)}\le C\| |x|^{-t} (-\Delta)^m u\|_{L^{p}(\R^n)}.
\end{equation}
\end{thm}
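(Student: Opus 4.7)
The plan is to exploit scale invariance by passing to the cylinder $\R\times S^{n-1}$ via logarithmic polar coordinates $x=e^s\omega$, $s=\log|x|$. Setting $w(s,\omega)=r^{n/q-t}u(e^s\omega)$, the identity $dx=r^n\,ds\,d\omega$ turns the weighted norm $\||x|^{-t}u\|_{L^q(\R^n)}$ into $\|w\|_{L^q(\R\times S^{n-1})}$, and the relation $\tfrac1p-\tfrac1q=\tfrac{2m}{n}$ is exactly what is required for the matching substitution on the right-hand side to be consistent. From
\[
-\Delta=-r^{-2}\bigl[\partial_s^2+(n-2)\partial_s+\Delta_\omega\bigr]\qquad\text{and}\qquad \partial_s\circ r^{-2}=r^{-2}(\partial_s-2),
\]
an induction gives $r^{2m}(-\Delta)^m=(-1)^m\prod_{j=0}^{m-1}\bigl[(\partial_s-2j)^2+(n-2)(\partial_s-2j)+\Delta_\omega\bigr]$, and conjugating by $r^{\alpha}$ with $\alpha=n/q-t$ absorbs the weight into a shift of $\partial_s$. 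Estimate \eqref{eq_JK} thus reduces to a uniform cylinder estimate
\[
\|w\|_{L^q(\R\times S^{n-1})}\le C\|L_\alpha w\|_{L^p(\R\times S^{n-1})},\qquad w\in C_0^\infty(\R\times S^{n-1}),
\]
for a translation-invariant, constant-coefficient operator $L_\alpha$ on the cylinder.

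Next, I diagonalize $L_\alpha$ by Fourier transform in $s$ (dual variable $\tau\in\R$) and expansion in spherical harmonics $Y_k$, on which $-\Delta_\omega$ has the eigenvalue $k(k+n-2)=\nu_k^2-\bigl(\tfrac{n-2}{2}\bigr)^2$ with $\nu_k=k+\tfrac{n-2}{2}$. Completing the square in each factor produces the symbol
\[
\widehat{L_\alpha}(\tau,k)=\prod_{j=0}^{m-1}(i\tau-\beta_j-\nu_k)(i\tau-\beta_j+\nu_k),\qquad \beta_j=2j+1-m-t.
\]
Since $n/q=(n-2m)/2$, a short computation gives $\beta_j+\nu_k=(2j+k)-(t-n/q)$ and $\beta_j-\nu_k=(2j-k+2-n)-(t-n/q)$, so the hypothesis $\delta=\dist(t-n/q,\Z)>0$ yields the uniform lower bound $|\beta_j\pm\nu_k|\ge\delta$ for all admissible $j\in\{0,\dots,m-1\}$ and $k\in\N$. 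Using $|i\tau-\mu|^2=\tau^2+\mu^2$, this translates to $|\widehat{L_\alpha}(\tau,k)|\ge c\,\delta^{2m}$ on any bounded frequency region, while $|\widehat{L_\alpha}(\tau,k)|\gtrsim(\tau^2+\nu_k^2)^m$ at high frequencies; in particular $1/\widehat{L_\alpha}$ is a well-defined bounded function on $\R\times\N$.

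It remains to prove the $L^p\to L^q$ bound for the Fourier multiplier $1/\widehat{L_\alpha}$ on the cylinder, with constant depending only on $\delta$ and $n$---this is the main obstacle, and this is where the exponents are forced onto the conformal line $\tfrac1p-\tfrac1q=\tfrac{2m}{n}$. My plan is to Littlewood--Paley decompose in $\tau$ and group spherical harmonics into dyadic clusters $\{k:\nu_k\sim N\}$; on each such block the multiplier decays like $N^{-2m}$, which exactly matches the cost of Sogge's $L^p\to L^q$ spectral-projection bound for the spherical-harmonic cluster projector on $S^{n-1}$ (equivalent to the sharp Stein--Tomas restriction inequality). The dyadic pieces then sum to a uniform bound, while the finitely many low-frequency ``resonant'' blocks are handled separately by the $\delta^{2m}$ lower bound. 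The algebraic reductions of the previous paragraphs are formal; what genuinely drives the argument is the precise matching between the dyadic decay of $1/|\widehat{L_\alpha}|$ and the Sogge/Stein--Tomas estimate, and this is the nonroutine step. Once the multiplier bound is in hand, undoing the coordinate changes recovers \eqref{eq_JK}.
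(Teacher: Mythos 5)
The paper does not prove this theorem: it is quoted verbatim from Jerison--Kenig \cite{Jerison_Kenig_1985}, so there is no internal proof to compare against. Your reduction is, structurally, the one used in that reference (logarithmic polar coordinates, conjugation by the weight, Fourier transform in $s$ and spherical harmonics in $\omega$), and your algebra checks out: the factorization of $r^{2m}(-\Delta)^m$, the identification $\beta_j=2j+1-m-t$, and the observation that $\beta_j\pm\nu_k\equiv -(t-n/q)\pmod{\Z}$, hence $|\beta_j\pm\nu_k|\ge\delta$, are all correct.

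The genuine gap is in your treatment of the resonant frequencies, which is precisely the hard part of the theorem. Two of your claims are false for large $t$ (and $t$ must be allowed to be arbitrarily large: the application in Section 2 of the paper lets $t\to\infty$). First, $|\widehat{L_\alpha}(\tau,k)|\gtrsim(\tau^2+\nu_k^2)^m$ fails at high frequency: take $\tau=0$ and $\nu_k$ within distance $O(1)$ of $-\beta_j=t+m-1-2j$; then the factor $|i\tau-\beta_j-\nu_k|$ is only $\ge\delta$ while its partner $|i\tau-\beta_j+\nu_k|\sim\nu_k$, so on that block the symbol is only $\gtrsim\delta\,\nu_k^{2m-1}$, losing a factor $\nu_k/\delta$ against your claimed $\nu_k^{2m}$. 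Second, these near-resonances sit at $\nu_k\approx t$, i.e.\ at arbitrarily \emph{high} spherical-harmonic degree, not in "finitely many low-frequency blocks", and the pointwise bound $|\widehat{L_\alpha}|\ge\delta^{2m}$ cannot handle them: a bounded multiplier on the non-compact cylinder is not automatically $L^p\to L^q$ bounded for $p<q$ (one needs decay of the kernel in $s$, which here comes from the poles of $1/\widehat{L_\alpha}$ lying at distance $\ge\delta$ from the real axis, producing $e^{-\delta|s|}$ decay and hence the $C(\delta,n)$ dependence). The correct matching at the $O(m)$ resonant degrees is between the lower bound $\delta\,k^{2m-1}$ for the symbol and the single spectral projection bound $\|\Pi_k\|_{L^p(S^{n-1})\to L^q(S^{n-1})}\lesssim k^{2m-1}$ for exactly these conformal exponents; your dyadic scheme, calibrated to a uniform $N^{-2m}$ decay, is mis-tuned at resonance and would not close. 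Until the resonant blocks are handled with the sharp projection estimate and the explicit one-dimensional kernels, the argument is incomplete at its decisive step.
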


In the next result we establish the $L^p$ Carleman estimates with linear weights for the polyharmonic operator. 
\begin{prop}
Let $m$ be an integer, $1\le m<n/2$, and $p=\frac{2n}{n+2m}$ and $q=\frac{2n}{n-2m}$. Then 
\begin{equation}
\label{eq_KRS}
\|e^{k\cdot x} u\|_{L^{q}(\R^n)}\le C\|e^{k\cdot x} (-\Delta)^m u\|_{L^p(\R^n)}
\end{equation}
uniformly in $k\in \R^n$ and $u\in C_0^\infty(\R^n)$.
\end{prop}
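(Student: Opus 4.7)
The plan is to deduce the linear-weight estimate \eqref{eq_KRS} from the logarithmic-weight estimate \eqref{eq_JK} by a translation-and-limit argument, in the spirit of Wolff's reduction for $m=1$. Since the case $k=0$ is just the Hardy--Littlewood--Sobolev inequality for $(-\Delta)^m$ with $1/p-1/q=2m/n$, we may assume $k\ne 0$ and write $\omega=k/|k|$.

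Fix $u\in C_0^\infty(\R^n)$ with $\supp u \subset B(0,\rho)$. For large $R>\rho$, translate: let $v(y)=u(y-R\omega)$, so $\supp v\subset B(R\omega,\rho)\subset \R^n\setminus\{0\}$. Apply \eqref{eq_JK} to $v$ with parameter $t=R|k|$; this gives, after undoing the translation,
\[
\||x+R\omega|^{-t} u(x)\|_{L^q(\R^n)} \le C(\delta,n)\,\||x+R\omega|^{-t}(-\Delta)^m u(x)\|_{L^p(\R^n)}.
\]
On $\supp u$ we expand $|x+R\omega|^2=R^2(1+2\omega\cdot x/R+|x|^2/R^2)$, whence taking logarithms and using $t=R|k|$,
\[
-t\log|x+R\omega| = -t\log R - \frac{t}{2}\log\!\Bigl(1+\frac{2\omega\cdot x}{R}+\frac{|x|^2}{R^2}\Bigr) = -t\log R - k\cdot x + O(1/R),
\]
uniformly for $x\in\supp u$. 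Thus $|x+R\omega|^{-t}=R^{-t}e^{-k\cdot x}(1+O(1/R))$ on $\supp u$. Multiplying the Carleman estimate through by $R^{t}$ and sending $R\to\infty$ produces
\[
\|e^{-k\cdot x} u\|_{L^q(\R^n)} \le C\,\|e^{-k\cdot x}(-\Delta)^m u\|_{L^p(\R^n)},
\]
and replacing $k$ by $-k$ gives \eqref{eq_KRS}.

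The main subtlety — and the one place one must be careful — is that the constant $C(\delta,n)$ in \eqref{eq_JK} depends on $\delta=\mathrm{dist}(t-n/q,\Z)$, which oscillates and can vanish as $t=R|k|\to\infty$. The fix is to pass to the limit only along a sequence $R_j\to\infty$ chosen so that $R_j|k|-n/q$ has fractional part equal to (say) $1/2$; along such a sequence $\delta\equiv 1/2$, so $C(\delta,n)$ stays bounded, and the pointwise convergence of the weights on $\supp u$ together with dominated convergence yields the limiting inequality. Everything else (absorbing the $(1+O(1/R))$ factors, passing from $C_0^\infty$ to larger classes if needed) is routine.
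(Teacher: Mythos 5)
Your proposal is correct and is essentially the paper's argument: translate $u$ away from the origin, apply the Jerison--Kenig logarithmic-weight estimate with a translation/exponent ratio tied to $|k|$, expand the logarithm of the weight on the (fixed, compact) support of $u$, and pass to the limit along parameters kept away from $\Z+\frac{n}{q}$ so that the constant $C(\delta,n)$ stays bounded. The only differences are cosmetic (you translate by $R\omega$ with exponent $t=R|k|$ and fix the sign at the end, whereas the paper translates by $tk$ with exponent $t$ and substitutes $k\mapsto k/|k|^2$ at the end), and your explicit choice of a sequence with $\delta\equiv 1/2$ is, if anything, slightly more careful about the uniformity of the constant than the paper's phrasing.
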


\begin{proof}
First notice that when $k=0$, the inequality \eqref{eq_KRS} follows from the Hardy--Littlewood--Sobolev inequality, see \cite[Theorem 4.5.3]{Hormander_book_1}. 

Let now $k\in \R^n\setminus\{0\}$ be fixed and let $u\in C^\infty_0(\R^n)$.  Consider the function $\tilde u(x)=u(x+tk)$. Since for $t>0$ sufficiently large, $\tilde u=0$ near zero, applying \eqref{eq_JK} to $\tilde u$, we get 
\begin{equation}
\label{eq_2_1}
\||x|^{-t} \tilde u\|_{L^{q}(\R^n)}\le C\| |x|^{-t} (-\Delta)^m \tilde u\|_{L^{p}(\R^n)},
\end{equation}
for $t>0$ sufficiently large such that $\delta=\text{dist}(t- \frac{n}{q},\Z)>0$.
Making the change of variables $x+tk\mapsto x$ in \eqref{eq_2_1} , we obtain that
\begin{equation}
\label{eq_2_2}
\||x-tk|^{-t}  u\|_{L^{q}(\R^n)}\le C\| |x-tk|^{-t} (-\Delta)^m u\|_{L^{p}(\R^n)}.
\end{equation}
Writing 
\[
|x-tk|=t|k|\sqrt{1-2\frac{x\cdot k}{t|k|^2}+\frac{|x|^2}{t^2 |k|^2}},
\]
we see that \eqref{eq_2_2} is equivalent to
\begin{equation}
\label{eq_2_3}
\| \bigg( 1-2\frac{x\cdot k}{t|k|^2}+\frac{|x|^2}{t^2 |k|^2} \bigg)^{-t/2}  u\|_{L^{q}(\R^n)}\le C\| \bigg( 1-2\frac{x\cdot k}{t|k|^2}+\frac{|x|^2}{t^2 |k|^2} \bigg)^{-t/2} (-\Delta)^m u\|_{L^{p}(\R^n)}.
\end{equation}
We have
\begin{align*}
\bigg( 1-2\frac{x\cdot k}{t|k|^2}+\frac{|x|^2}{t^2 |k|^2} \bigg)^{-t/2}&=\exp\bigg(-\frac{t}{2}\log(1-2\frac{x\cdot k}{t|k|^2}+\frac{|x|^2}{t^2 |k|^2} )\bigg)\\
&= \exp\bigg(\frac{x\cdot k}{|k|^2}+\mathcal{O}_{k,\text{supp}\,u}(\frac{1}{t})\bigg)\to e^{\frac{x\cdot k}{|k|^2}},
\end{align*}
uniformly on $\supp u$, as $t\to \infty$ away from the set $\Z+\frac{n}{q}$.
Hence, passing to the limit in \eqref{eq_2_3},  we get 
\begin{equation}
\label{eq_sec_2KRS}
\| e^{\frac{x\cdot k}{|k|^2}} u\|_{L^{q}(\R^n)}\le C\| e^{\frac{x\cdot k}{|k|^2}} (-\Delta)^m u\|_{L^{p}(\R^n)},
\end{equation}
with the same constant as in \eqref{eq_JK}. 
Replacing $k\in \R^n\setminus\{0\}$ in \eqref{eq_sec_2KRS} by $k/|k|^2$, we recover \eqref{eq_KRS}. The proof is complete. 
\end{proof}

\section{A special Green function for the polyharmonic operator. Proof of Theorem \ref{thm_main_2}}

\label{sec_Green_function}

Let $\zeta\in\C^n\setminus\{0\}$ be such that $\zeta\cdot\zeta=0$, and let us consider the constant coefficient differential operator, 
\begin{equation}
\label{eq_1_0}
e^{-x\cdot \zeta} (-\Delta)^m e^{x\cdot \zeta}=(-\Delta-2\zeta\cdot \nabla)^m.
\end{equation}
The purpose of this section is to construct a suitable tempered fundamental solution of the operator \eqref{eq_1_0}, and to prove Theorem \ref{thm_main_2}.  
To that end let us consider the equation,
\begin{equation}
\label{eq_1_1}
(-\Delta-2\zeta\cdot \nabla)^m w=\delta \quad \text{in}\quad \R^n,
\end{equation}
where $w\in\mathcal{S}'(\R^n)$. 
Taking the Fourier transform in \eqref{eq_1_1}, we obtain that 
\[
(|\xi|^2-2i\zeta\cdot\xi)^m\hat w(\xi)=1.
\]
Here and in what follows we use the  normalization,
\[
\hat f(\xi)=\mathcal{F}(f)(\xi)=\int_{\R^n} e^{-ix\cdot\xi} f(x)dx, \quad f\in \mathcal{S}(\R^n), 
\]
extended to the space $\mathcal{S}'(\R^n)$ in the usual way. 

The condition $\zeta\cdot\zeta=0$ is equivalent to the fact that 
\begin{equation}
\label{eq_1_2}
\Re\zeta\cdot\Im\zeta=0,\quad |\Re\zeta|=|\Im\zeta|.
\end{equation}
Using \eqref{eq_1_2} together with the fact that the Laplacian commutes with rotations,  we  may and shall assume, without loss of generality, that $\zeta$ in \eqref{eq_1_1} has the form,
\[
\zeta=s e_1-is e_2, \quad s=|\zeta|/\sqrt{2}>0,
\] 
where $e_1$ and $e_2$ are the first two vectors in the standard orthonormal basis in $\R^n$. 

The symbol of the operator $(-\Delta-2\zeta\cdot \nabla)^m$ is equal to  $(p_\zeta(\xi))^m$, where
\[
p_\zeta(\xi)=|\xi|^2-2i\zeta\cdot\xi=|\xi-s e_2|^2-s^2 -2is\xi_1.
\]
The characteristic set of $(-\Delta-2\zeta\cdot \nabla)^m$ is given by
\begin{equation}
\label{eq_5_sigma_zeta}
\Sigma_\zeta=\{\xi\in\R^n: p_\zeta(\xi)=0\}=\{\xi\in\R^n: \xi_1=0, |\xi-s e_2|=s\}.
\end{equation}
Thus, $\Sigma_\zeta$ is the codimension $2$ sphere, which is obtained as the intersection of the hyperplane $\xi_1=0$
and the $(n-1)$--dimensional sphere, centered at $se_2$ and of radius $s$. 

In what follows, we shall use the standard notation $a \lesssim b$ for $a, b>0$, which means that there exists a constant $C>0$ independent of $\zeta$ such that $a\le C b$. We shall also write $a\sim b$ when $a\lesssim b$ and $b\lesssim a$.  When the constant $C$ depends on a large parameter $M$, we indicate this dependence by writing $a\sim_M b$.

We shall need the following result concerning the behavior of  $p_\zeta(\xi)$ in $\R^n$, established in \cite{Sylvester_Uhlmann_1987}, see also \cite{Haberman_Tataru}.   We shall present the proof  since some of the estimates in the proof will be used in what follows. 
\begin{lem}
\label{lem_behavior_p_zeta}
For $|\xi|\ge 4|\zeta|$, we have  
\begin{equation}
\label{eq_1_3_0}
|p_\zeta(\xi)|\sim |\xi|^2.
\end{equation}
For $|\xi|\le M |\zeta|$ with a fixed constant $M$, 
\begin{equation}
\label{eq_1_3}
|p_\zeta (\xi)|\sim_M s d(\xi,\Sigma_\zeta),
\end{equation}
 where $d(\xi,\Sigma_\zeta)$ is the distance from $\xi$ to $\Sigma_\zeta$.
\end{lem}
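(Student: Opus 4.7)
\textbf{Proof plan for Lemma \ref{lem_behavior_p_zeta}.}

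The two regimes are geometrically distinct, so I would treat them separately. The estimate \eqref{eq_1_3_0} is the easy case: by the triangle inequality
\[
\bigl||\xi|^{2} - 2i\zeta\cdot\xi\bigr| \leq |\xi|^{2} + 2|\zeta||\xi|, \qquad \bigl||\xi|^{2} - 2i\zeta\cdot\xi\bigr| \geq |\xi|^{2} - 2|\zeta||\xi|,
\]
and under the hypothesis $|\xi| \geq 4|\zeta|$ the cross term $2|\zeta||\xi|$ is bounded by $|\xi|^{2}/2$, so $\tfrac12 |\xi|^{2} \leq |p_\zeta(\xi)| \leq \tfrac32 |\xi|^{2}$.

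For \eqref{eq_1_3}, I would exploit the specific form $\zeta = se_1 - ise_2$. Splitting into real and imaginary parts gives the key identity
\[
|p_\zeta(\xi)|^{2} = \bigl(|\xi-se_2|^{2} - s^{2}\bigr)^{2} + 4s^{2}\xi_1^{2}.
\]
To relate this to the distance, I would introduce coordinates adapted to $\Sigma_\zeta$: write $\xi = (\xi_1, \xi')$ with $\xi' \in \R^{n-1}$, set $r = |\xi' - se_2'|$ (where $e_2'$ is the first standard basis vector of $\R^{n-1}$), and record the elementary fact that the distance to the codimension-$2$ sphere $\Sigma_\zeta$ factors as
\[
d(\xi,\Sigma_\zeta)^{2} = \xi_1^{2} + (r-s)^{2}.
\]
Since $|\xi-se_2|^{2} - s^{2} = \xi_1^{2} + (r-s)(r+s)$, the identity above becomes
\[
|p_\zeta(\xi)|^{2} = \bigl(\xi_1^{2} + (r-s)(r+s)\bigr)^{2} + 4s^{2}\xi_1^{2}.
\]

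With this in hand, the equivalence $|p_\zeta(\xi)|^{2} \sim_M s^{2}(\xi_1^{2} + (r-s)^{2})$ reduces to comparing the two sides. For the upper bound, the hypothesis $|\xi| \leq Ms$ yields $r \leq (M+1)s$, so $(r+s)^{2} \lesssim_M s^{2}$ and $\xi_1^{2} \lesssim_M s |\xi_1|$, and I would expand the square and bound each term by $C_M s^{2}(\xi_1^{2} + (r-s)^{2})$ by straightforward applications of Young's inequality. For the lower bound, I would pass to the scaled polar coordinates $\xi_1 = d\sin\alpha$, $r-s = d\cos\alpha$ with $d = d(\xi,\Sigma_\zeta)$ and $t = d/s$, reducing the problem to proving
\[
(t + 2\cos\alpha)^{2} + 4\sin^{2}\alpha = t^{2} + 4t\cos\alpha + 4 \geq c > 0,
\]
subject to the geometric constraint $r \geq 0$, i.e.\ $t\cos\alpha \geq -1$. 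A short case analysis on the sign and size of $\cos\alpha$ shows this quantity is bounded below by $1$ uniformly, which gives the lower bound with an absolute constant.

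\emph{Main obstacle.} The easy part is the upper bound; the delicate step is the lower bound in \eqref{eq_1_3}. The term $\bigl(\xi_1^{2} + (r-s)(r+s)\bigr)^{2}$ can be made small by cancellation between $\xi_1^{2}$ and $(r-s)(r+s)$ when $r < s$, and one must show that such cancellation is always compensated by the remaining $4s^{2}\xi_1^{2}$ term. The constraint $r \geq 0$ (which excludes the spurious zero at $\cos\alpha = -1$, $t = 2$) is precisely what saves the estimate, and the cleanest way I see to capture this is via the polar parametrization above.
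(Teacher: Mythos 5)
Your proof is correct, including the delicate lower bound in \eqref{eq_1_3}: the reduction to $t^{2}+4t\cos\alpha+4\ge c$ under the constraint $t\cos\alpha\ge -1$ does check out (if $t\ge 1$ use $4t\cos\alpha\ge -4$ to get $\ge t^{2}\ge 1$; if $t<1$ use $\cos\alpha\ge-1$ to get $\ge(t-2)^{2}>1$), and the first part is identical to the paper's. For the second part, however, you take a genuinely different route. The paper never squares the symbol: it uses $|a+ib|\sim|a|+|b|$ to obtain $|p_\zeta(\xi)|\sim_M s\big(\,\big||\xi-se_2|-s\big|+|\xi_1|\big)$, and the point is that keeping the full $n$-dimensional norm $|\xi-se_2|$ intact makes the real part factor as $(|\xi-se_2|-s)(|\xi-se_2|+s)$ with second factor $\ge s$, so the cancellation you identify as the main obstacle never arises there. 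The price the paper pays is a separate comparison of $\big||\xi-se_2|-s\big|+|\xi_1|$ with the distance: the lower bound comes from projecting onto the $\xi_1$-axis and onto radii through $se_2$ (as in \eqref{eq_1_4_0}), and the upper bound from the exact in-plane formula \eqref{eq_1_4} together with a short case analysis on the sign of $|\xi'-se_2|-s$. You instead expand $|\xi-se_2|^{2}=\xi_1^{2}+r^{2}$ at the outset, which creates the cancellation between $\xi_1^{2}$ and $(r-s)(r+s)$, and you dispose of it with the exact distance identity $d^{2}=\xi_1^{2}+(r-s)^{2}$ and the polar minimization. Each approach buys something: yours yields the lower bound $|p_\zeta(\xi)|\ge s\,d(\xi,\Sigma_\zeta)$ for \emph{all} $\xi$ with an absolute constant, not only for $|\xi|\le M|\zeta|$; the paper's intermediate inequalities \eqref{eq_1_4_0} and \eqref{eq_1_4} are reused verbatim later (in the construction of the open cover of $N_{s/2n}(\Sigma(s))$ and in the proof of Lemma \ref{lem_tempered}), which is why the proof is organized around them.
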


\begin{proof}
Assume first that $|\xi|\ge 4|\zeta|$. Then  \eqref{eq_1_3_0} follows from the following estimates, 
\[
|p_\zeta(\xi)|\le |\xi|^2+2|\zeta||\xi|\le \frac{3}{2}|\xi|^2,
\]
and
\[
|p_\zeta(\xi)|\ge |\xi|^2-2|\zeta||\xi|\ge \frac{1}{2}|\xi|^2.
\]

To see \eqref{eq_1_3}, we first observe that for $|\xi|\le M |\zeta|$,
\begin{equation}
\label{eq_1_3_1}
|p_\zeta(\xi)|\sim ||\xi-se_2|^2-s^2|+2s|\xi_1|\sim_M s(||\xi-s e_2|-s|+|\xi_1|).
\end{equation}
Let $\xi\in \R^n$. Then for any $\eta\in \Sigma_\zeta$, we get
\begin{equation}
\label{eq_1_4_0}
\begin{aligned}
&|\xi-\eta|\ge |\xi_1|,\\
&|\xi-\eta|\ge ||\xi-s e_2|-|\eta-s e_2|| = ||\xi-s e_2|-s|,
\end{aligned}
\end{equation}
and therefore,
\begin{equation}
\label{eq_1_3_2}
d(\xi,\Sigma_\zeta)\ge \frac{1}{2}(||\xi-s e_2|-s|+|\xi_1|). 
\end{equation}
On the other hand, we have 
\begin{equation}
\label{eq_1_4}
d(\xi,\Sigma_\zeta)\sim |\xi_1|+\inf_{\eta':  |\eta'-s e_2|=s} |\xi'-\eta'|= |\xi_1| +||\xi'-se_2|-s|.
\end{equation}
Here we have used the fact that the distance from the point $\xi'=(\xi_2,\dots, \xi_n)\in \R^{n-1}$ to the sphere $\{\eta'=(\eta_2,\dots, \eta_n)\in \R^{n-1}:  |\eta'-s e_2|=s\}$ is given by $||\xi'-se_2|-s|$.

If $|\xi'-se_2|-s\ge 0$, then 
\[
||\xi'-se_2|-s|=|\xi'-se_2|-s\le |\xi-se_2|-s= ||\xi-se_2|-s|.
\]
If $|\xi'-se_2|-s<0$, then 
\[
||\xi'-se_2|-s|=s-|\xi'-se_2|\le |\xi_1|+s-|\xi-se_2|\le |\xi_1|+|s-|\xi-se_2||. 
\]
Thus, 
\begin{equation}
\label{eq_1_3_3}
d(\xi,\Sigma_\zeta)\lesssim ||\xi-s e_2|-s|+|\xi_1|,
\end{equation}
uniformly in $s>0$. 
Hence, using \eqref{eq_1_3_1}, \eqref{eq_1_3_2} and  \eqref{eq_1_3_3}, we obtain \eqref{eq_1_3}. The proof is complete.  
\end{proof}

The next result is well-known and  is presented here for the convenience of the reader. 
\begin{lem}
\label{lem_tempered}
Let $n\ge 3$. 
Then for every $\zeta\in \C^n$, $\zeta\cdot\zeta=0$, and $|\zeta|\ge 4$,   we have 
\[
 \frac{1}{p_\zeta(\xi)}\in L^q_{\emph{\text{loc}}}(\R^n),
\]
if and only if $1\le q<2$, and for such $q$, 
\begin{equation}
\label{eq_dist_l_q_l_p}
\frac{1}{p_\zeta(\xi)}\in L^q(\R^n)+\langle \xi \rangle^{-2} L^\infty(\R^n) \subset \mathcal{S}'(\R^n).
\end{equation}
\end{lem}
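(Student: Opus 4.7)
My strategy rests on the two-sided bounds from Lemma~\ref{lem_behavior_p_zeta}. Since $p_\zeta$ is continuous on $\R^n$ and vanishes exactly on $\Sigma_\zeta$, the only obstruction to local integrability sits near $\Sigma_\zeta$, while membership in $\mathcal{S}'(\R^n)$ will follow from a low/high frequency splitting relative to $|\zeta|$.

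First I would address local integrability. Off $\Sigma_\zeta$ the function $1/p_\zeta$ is smooth and hence locally bounded, so it suffices to estimate $\int_U |p_\zeta(\xi)|^{-q}\,d\xi$ on a small tubular neighborhood $U$ of $\Sigma_\zeta$. Since $|\zeta|\ge 4$, on $U$ we are in the regime $|\xi|\le M|\zeta|$ for a fixed $M$, and \eqref{eq_1_3} gives $|p_\zeta(\xi)|^{-q}\sim_M s^{-q}\,d(\xi,\Sigma_\zeta)^{-q}$. As $\Sigma_\zeta$ is a smooth compact submanifold of $\R^n$ of codimension $2$, I would parametrize $U$ by tangential coordinates along $\Sigma_\zeta$ together with polar coordinates $(r,\theta)$ in the two-dimensional normal plane. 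The Jacobian is smooth and contains the factor $r$, so the integral reduces to a constant multiple of $\int_0^{r_0} r^{1-q}\,dr$, which converges precisely when $q<2$. The two-sided nature of \eqref{eq_1_3} then yields both the ``if'' and the ``only if'' direction.

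For the decomposition claim, I would take $\chi\in C_0^\infty(\R^n)$ with $\chi(\xi)=1$ on $\{|\xi|\le 4|\zeta|\}$ and $\supp\chi\subset\{|\xi|\le 5|\zeta|\}$, and split
\[
\frac{1}{p_\zeta(\xi)}=\chi(\xi)\frac{1}{p_\zeta(\xi)}+(1-\chi(\xi))\frac{1}{p_\zeta(\xi)}.
\]
The first summand has compact support and is locally $L^q$ by the previous step, hence belongs to $L^q(\R^n)$. On $\supp(1-\chi)$ we have $|\xi|\ge 4|\zeta|$, so \eqref{eq_1_3_0} gives $|p_\zeta(\xi)|^{-1}\lesssim |\xi|^{-2}\lesssim \langle\xi\rangle^{-2}$, and therefore $(1-\chi)/p_\zeta\in \langle\xi\rangle^{-2} L^\infty(\R^n)$. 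Both summands are tempered distributions, which yields the inclusion in $\mathcal{S}'(\R^n)$.

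The only delicate point is the tubular-neighborhood calculation: one must confirm that the codimension-$2$ structure of $\Sigma_\zeta$ together with the two-sided estimate \eqref{eq_1_3} produces the sharp threshold $q=2$, and not merely an upper bound. Everything else reduces to a routine cutoff argument combined with a direct application of Lemma~\ref{lem_behavior_p_zeta}.
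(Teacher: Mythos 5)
Your proposal is correct and follows essentially the same route as the paper: both arguments rest on the two-sided estimates of Lemma~\ref{lem_behavior_p_zeta}, reduce the local integrability question to the integrability of $d(\xi,\Sigma_\zeta)^{-q}$ near the codimension-two sphere $\Sigma_\zeta$, and obtain the temperedness via the identical cutoff decomposition at $|\xi|\sim 4|\zeta|$. The only (cosmetic) difference is that you invoke the tubular neighborhood theorem to reduce to $\int_0^{r_0} r^{1-q}\,dr$, whereas the paper computes the same integral explicitly using $d(\xi,\Sigma_\zeta)\sim|\xi_1|+\big||\xi'-se_2|-s\big|$ and polar coordinates in $\xi'$, arriving at the model integral $\int\!\!\int_{|\xi_1|,|\xi_2|\le 1}(|\xi_1|+|\xi_2|)^{-q}\,d\xi_1 d\xi_2$ with the same threshold $q=2$.
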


\begin{proof}
The complex vector $\zeta$ will be kept fixed in the proof.  Let $K\subset\R^n$ be a fixed compact set. Then by \eqref{eq_1_3}, for $\xi\in K$, we have $|p_\zeta(\xi)|\sim  d(\xi,\Sigma_\zeta)$.
Let $q\ge 1$ and let us write 
\[
\int_K \frac{1}{|p_\zeta(\xi)|^q}d\xi\sim \int_{\{\xi\in K:d(\xi, \Sigma_\zeta)\le 1\}} \frac{1}{(d(\xi,\Sigma_\zeta))^q}d\xi+\int_{\{\xi\in K:d(\xi, \Sigma_\zeta)\ge 1\}} \frac{1}{( d(\xi,\Sigma_\zeta))^q}d\xi,
\]
where the last integral is finite. Recalling from \eqref{eq_1_4} that 
\[
d(\xi,\Sigma_\zeta)\sim |\xi_1| +||\xi'-se_2|-s|,
\]
and passing to the polar coordinates in $\xi'$, centered at $s e_2$, i.e. $\xi'=s e_2+r\theta$, $r>0$, $\theta\in \mathbb{S}^{n-2}$, we get 
\begin{align*}
&\int_{\{\xi\in K:d(\xi, \Sigma_\zeta)\le 1\}} \frac{1}{d(\xi,\Sigma_\zeta)^q}d\xi\sim \int_{\mathbb{S}^{n-2}}\int_{s-1}^{s+1}\int_{|\xi_1|\le 1}
\frac{r^{n-2}}{(|\xi_1|+|r-s|)^q} d\xi_1dr d\theta\\
&\sim
\int_{s-1}^{s+1}\int_{|\xi_1|\le 1}
\frac{1}{(|\xi_1|+|r-s|)^q} d\xi_1dr=
\int_{|\xi_2|\le 1}\int_{|\xi_1|\le 1} \frac{1}{(|\xi_1|+|\xi_2|)^q} d\xi_1d\xi_2.
\end{align*}
Here we have used the fact that $s-1\le r\le s+1$ and $s>2$ is fixed.  The latter integral is finite precisely when $q<2$. 

To check \eqref{eq_dist_l_q_l_p}  it suffices to consider the decomposition, 
\[
\frac{1}{p_\zeta(\xi)}= \chi(\xi)\frac{1}{p_\zeta(\xi)}+(1-\chi(\xi)) \frac{1}{p_\zeta(\xi)}.
\]
Here $\chi\in C_0^\infty(\R^n)$ is such that  $\chi=1$ on $\{\xi: |\xi|<4|\zeta|\}$.
The proof is complete. 
\end{proof}

In what follows we shall consider convolutions of tempered distributions and  Schwartz functions.  Let us recall briefly the standards facts about them,  see \cite{Hormander_book_1}. 
Let $u\in \mathcal{S}'(\R^n)$ and  $f\in\mathcal{S}(\R^n)$. Then the convolution $u*f$ is defined by
\[
(u*f)(x)=u_y(f(x-y)).
\]
We have 
\[
u*f\in \mathcal{S}'(\R^n)\cap C^\infty(\R^n),
\]
and 
\[
\mathcal{F}(u*f)= \hat f \hat u\in \mathcal{S}'(\R^n). 
\]

When $m=1$, the distribution 
\[
g^{(1)}_\zeta(x)=\mathcal{F}^{-1}\bigg(\frac{1}{p_\zeta(\xi)}\bigg)\in\mathcal{S}'(\R^n)
\]
is a tempered fundamental solution of the operator $-\Delta-2\zeta\cdot \nabla$. 
This fundamental solution was introduced in \cite{Faddeev_1965} and \cite{Sylvester_Uhlmann_1987}. 
The convolution  operator 
\[
G_\zeta^{(1)}f:=g_\zeta^{(1)}*f:\mathcal{S}(\R^n)\to \mathcal{S}'(\R^n)
\]
is continuous, and   in \cite{Sylvester_Uhlmann_1987}, it was shown that for $|\zeta|\ge s_0>0$, we have  $G_\zeta^{(1)}:L^2_{\sigma+1}(\R^n)\to L^2_\sigma(\R^n)$, $-1<\sigma<0$, with the bound
\[
\|G_\zeta^{(1)}f\|_{L^2_\sigma(\R^n)}\le \frac{C}{|\zeta|}\|f\|_{L^2_{\sigma+1}(\R^n)}.
\]

When $m\ge 2$, according to Lemma \ref{lem_tempered} we have $\frac{1}{(p_\zeta(\xi))^m}\notin L^1_{\text{loc}}(\R^n)$, and therefore, it cannot be viewed as a distribution directly.  Hence, we shall proceed to regularize it.  Following \cite{Sylvester_Uhlmann_1987} let us  introduce an open cover of $\R^n$ in the following way. 
Here it will be convenient  to denote by $\Sigma(s)$ the characteristic set $\Sigma_\zeta$, given by 
\eqref{eq_5_sigma_zeta}.  
Let us set
\[
V_1(s)=\R^n\setminus N_{\frac{s}{2n}}(\Sigma(s)). 
\]
where 
\[
N_r(\Sigma(s))=\{\xi\in\R^n: d(\xi,\Sigma(s))\le r\},  \quad r>0.
\] 
To construct an open cover of the compact set $N_{\frac{s}{2n}}(\Sigma(s))$, we observe from  \eqref{eq_1_4_0} that if $\xi\in N_{\frac{s}{2n}}(\Sigma(s))$ then $|\xi_1|\le \frac{s}{2n}$ and $|\xi-s e_2|\ge s-\frac{s}{2n}$.  Therefore, the length of at least one component of $\xi-s e_2$ must be greater or equal to  $\frac{1}{\sqrt{n}}(s-\frac{s}{2n})>\frac{s}{2n}$, $n\ge 2$. Thus, letting
\begin{equation}
\label{eq_5_sets_V_j}
\begin{aligned}
V_2(s)&=\{\xi\in \R^n: |\xi_2-s|>\frac{s}{2n}\}\cap N_{s}(\Sigma(s))^0,\\
V_j(s)&=\{\xi\in \R^n: |\xi_j|>\frac{s}{2n}\}\cap N_{s}(\Sigma(s))^0, \quad j=3,\dots, n,
\end{aligned}
\end{equation}
where $N_{s}(\Sigma(s))^0$ is the interior of $N_{s}(\Sigma(s))$, 
we have 
\begin{equation}
\label{eq_5_open_cover_ind_s_1}
N_{\frac{s}{2n}}(\Sigma(s))\subset\cup_{j=2}^n V_j(s). 
\end{equation}

It will be convenient to decompose the open sets $V_j(s)$ further,
\begin{equation}
\label{eq_5_sets_V_j_pm}
\begin{aligned}
V_{2,\pm}(s)&=\{\xi\in V_2(s): \pm(\xi_2-s)>0 \},\\
 V_{j,\pm}(s)&=\{\xi\in V_j(s): \pm \xi_j>0 \},  \quad j=3,\dots, n.
\end{aligned}
\end{equation}

We have the scaling relations, 
\begin{equation}
\label{eq_scalling_relations_sets}
\begin{aligned}
&\Sigma(s)=s\Sigma(1), \quad N_{\frac{s}{2n}}(\Sigma(s))=s N_{\frac{1}{2n}}(\Sigma(1)),\\
&V_1(s)=s V_1(1), \quad V_{j,\pm}(s)=s V_{j,\pm}(1), \quad j=2, 3,\dots, n.
\end{aligned}
\end{equation}
Thus, \eqref{eq_5_open_cover_ind_s_1} is equivalent to 
\begin{equation}
\label{eq_5_open_cover_ind_s}
N_{\frac{1}{2n}}(\Sigma(1))\subset \cup_{j=2}^{n} (V_{j,+}(1)\cup V_{j,-}(1)).
\end{equation}

Let $\chi_{j,\pm}(1)$, $j=2,\dots, n$, be a partition of unity subordinate to the open cover \eqref{eq_5_open_cover_ind_s} of the compact set $N_{\frac{1}{2n}}(\Sigma(1))$, i.e.
$0\le \chi_{j,\pm}(1)\in C^\infty_0(V_{j,\pm}(1))$,  and $\sum_{j=2}^n (\chi_{j,+}(1)+\chi_{j,-}(1))=1$ near $N_{\frac{1}{2n}}(\Sigma(1))$. We set 
$\chi_1(1)=1-\sum_{j=2}^n (\chi_{j,+}(1)+\chi_{j,-}(1))\in C^\infty(\R^n)$ and we have $\chi_1(1)=0$ near $N_{\frac{1}{2n}}(\Sigma(1))$.  

Setting 
\begin{equation}
\label{eq_5_partician_scalling}
\chi_1(s)(\xi)=\chi_1(1)(\xi/s),\quad \chi_{j,\pm}(s)(\xi)=\chi_{j,\pm}(1)(\xi/s),\quad j=2,\dots,n,
\end{equation}
we see that $\chi_{j,\pm}(s)$, $j=2,\dots, n$, is a partition of unity subordinate to the open cover $\cup_{j=2}^{n} (V_{j,+}(s)\cup V_{j,-}(s))$ of the compact set $N_{\frac{s}{2n}}(\Sigma(s))$, i.e.
$0\le \chi_{j,\pm}(s)\in C^\infty_0(V_{j,\pm}(s))$,  and $\sum_{j=2}^n (\chi_{j,+}(s)+\chi_{j,-}(s))=1$ near $N_{\frac{s}{2n}}(\Sigma(s))$. Furthermore, $\chi_1(s)=1-\sum_{j=2}^n (\chi_{j,+}(s)+\chi_{j,-}(s))\in C^\infty(\R^n)$ and $\chi_1(s)=0$ near $N_{\frac{s}{2n}}(\Sigma(s))$.

To solve the equation 
\begin{equation}
\label{eq_5_0_1}
(|\xi|^2-2i \zeta\cdot \xi)^m \hat w=1\quad \text{in}\quad \R^n,
\end{equation}
 we seek a tempered distribution $\hat w$ of the form
\begin{equation}
\label{eq_5_0_2}
\hat w=\hat w_1+\sum_{j=2}^n (\hat w_{j,+}+\hat w_{j,-}),
\end{equation}
where 
$\hat w_1$ satisfies the equation
\begin{equation}
\label{eq_5_1_hat_w_1}
(|\xi|^2-2i \zeta\cdot \xi)^m \hat w_1=\chi_1(s)\quad \text{in}\quad \R^n,
\end{equation}
and 
$\hat w_{j,\pm}$ solves the equation
\begin{equation}
\label{eq_5_1}
(|\xi|^2-2i \zeta\cdot \xi)^m \hat w_{j,\pm}= \chi_{j,\pm}(s)\quad \text{in}\quad \R^n,\quad j=2,\dots,n.
\end{equation}

Since $\supp (\chi_1(s))\subset \R^n\setminus N_{\frac{s}{2n}}(\Sigma(s))$, we see that the function 
\begin{equation}
\label{eq_5_0_3}
\hat w_1(\xi)=\frac{\chi_1(s)(\xi)}{(p_\zeta(\xi))^m}\in C^\infty(\R^n)\cap \mathcal{S}'(\R^n)
\end{equation}
solves the equation \eqref{eq_5_1_hat_w_1}.

To solve the equation \eqref{eq_5_1}, we define $\Phi^{(j,\pm)}(s): V_{j,\pm}(s)\to \Phi^{(j,\pm)}(s)(V_{j,\pm}(s))$, $j=2,\dots,n$,  by
\begin{equation}
\label{eq_5_diffeomorphism_def}
\begin{aligned}
\eta_1&=\Phi_1^{(j,\pm)}(s)(\xi)=-2\xi_1,\\
\eta_j&=\Phi_j^{(j,\pm)}(s)(\xi) =\frac{\xi_1^2+(\xi_2-s)^2+\xi_3^2+\dots+\xi_n^2-s^2}{s},\\
\eta_l& =\Phi_l^{(j,\pm)}(s)(\xi)=\xi_l,\quad l\ne 1,\ j.
\end{aligned}
\end{equation}
The Jacobian of this transformation is given by
\[
|\det [\frac{\p \eta}{\p \xi}]|=\begin{cases}  \frac{4|\xi_2-s|}{s}, & j=2,\\
\frac{4|\xi_j|}{s}, & j=3,\dots, n. 
\end{cases}
\]
Thus, for $\xi\in V_{j,\pm}(s)$, we have $\frac{2}{n}<|\det [\frac{\p \eta}{\p \xi}]|<8$ and hence,  $\Phi^{(j,\pm)}(s)$ is a local diffeomorphism. Furthermore, since  $\Phi^{(j,\pm)}(s): V_{j,\pm}(s)\to \Phi^{(j,\pm)}(s)(V_{j,\pm}(s))$ is injective, we conclude that it is a global diffeomorphism.

We have also the  scaling relation, 
\[
\Phi^{(j,\pm)}(s)(\xi)=s\Phi^{(j,\pm)}(1)(\xi/s),
\]
where the map
\[
\Phi^{(j,\pm)}(1): V_{j,\pm}(1)\to \Phi^{(j,\pm)}(1) (V_{j,\pm}(1))
\]
is a smooth diffeomorphism between bounded open sets, which are independent of $s$.

Now in the new coordinates  we have
\[
 p_\zeta((\Phi^{(j,\pm)}(s))^{-1}(\eta))=s(\eta_j+i\eta_1),
 \]
 and therefore, the equation \eqref{eq_5_1} becomes
 \begin{equation}
 \label{eq_5_1_2}
 s^m(\eta_j+i\eta_1)^m \hat w_{j,\pm}((\Phi^{(j,\pm)}(s))^{-1}(\eta))=\chi_{j,\pm}(s)((\Phi^{(j,\pm)}(s))^{-1}(\eta))\quad  \text{in}\quad \R^n.
 \end{equation}

To proceed we shall need the following result.  
\begin{lem}
\label{lem_fundam_solution}
Let $m\ge 2$, $j=2,\dots, n$, and let  
\begin{equation}
\label{eq_e_zeta_m_j}
E_\zeta^{(m, j)}=\frac{(-1)^{m-1}}{s^m(m-1)!}\p_{\eta_j}^{(m-1)}\bigg( \frac{1}{\eta_j+i\eta_1} \bigg)\in \mathcal{S}'(\R^n),
\end{equation}
where the derivatives are taken in the sense of distributions. 
Then 
\begin{equation}
\label{eq_5_2}
s^m(\eta_j+i\eta_1)^m E_\zeta^{(m,j)}=1\quad \text{in}\quad \R^n.
\end{equation}
\end{lem}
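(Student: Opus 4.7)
First I would justify that $E_\zeta^{(m,j)}$ makes sense as an element of $\mathcal{S}'(\R^n)$. The function $\frac{1}{\eta_j+i\eta_1}$ depends only on the two variables $(\eta_1,\eta_j)$, and in the $(\eta_1,\eta_j)$-plane it is locally integrable since $|\eta_j+i\eta_1|^{-1}$ is just $|z|^{-1}$ in $\R^2$. A cylindrical coordinate computation in $\R^n$ then shows that $\frac{1}{\eta_j+i\eta_1}$ belongs to $L^1_{\text{loc}}(\R^n)$ and is polynomially bounded, so it defines a tempered distribution; consequently its distributional derivative $\partial_{\eta_j}^{m-1}$ is also tempered, and $E_\zeta^{(m,j)}\in \mathcal{S}'(\R^n)$. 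After cancelling the overall scalar factor $\frac{(-1)^{m-1}}{s^m(m-1)!}$, the identity \eqref{eq_5_2} reduces to the purely distributional statement
\begin{equation*}
(\eta_j+i\eta_1)^{m}\,\partial_{\eta_j}^{m-1}\!\left(\frac{1}{\eta_j+i\eta_1}\right)=(-1)^{m-1}(m-1)!\quad\text{in }\mathcal{S}'(\R^n).
\end{equation*}

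The natural way to establish this is by induction on $m\ge 1$. For the base case $m=1$, the smooth factor $\eta_j+i\eta_1$ may be multiplied pointwise against the $L^1_{\text{loc}}$ function $\frac{1}{\eta_j+i\eta_1}$, and the product equals $1$ almost everywhere, hence equals $1$ as a distribution. For the inductive step, I would start from the hypothesis $(\eta_j+i\eta_1)^{m-1}\partial_{\eta_j}^{m-2}\frac{1}{\eta_j+i\eta_1}=(-1)^{m-2}(m-2)!$ and differentiate both sides in $\eta_j$. Since $(\eta_j+i\eta_1)^{m-1}$ is a smooth (polynomial) multiplier, the Leibniz rule is valid for the distributional product, and the derivative of the constant on the right vanishes, yielding
\begin{equation*}
(m-1)(\eta_j+i\eta_1)^{m-2}\partial_{\eta_j}^{m-2}\tfrac{1}{\eta_j+i\eta_1} + (\eta_j+i\eta_1)^{m-1}\partial_{\eta_j}^{m-1}\tfrac{1}{\eta_j+i\eta_1}=0.
\end{equation*}
Multiplying through by the smooth factor $\eta_j+i\eta_1$ and invoking the inductive hypothesis on the first term turns this into $(\eta_j+i\eta_1)^{m}\partial_{\eta_j}^{m-1}\frac{1}{\eta_j+i\eta_1}=-(m-1)(-1)^{m-2}(m-2)!=(-1)^{m-1}(m-1)!$, which is exactly what is needed. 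Restoring the prefactor then gives $s^m(\eta_j+i\eta_1)^m E_\zeta^{(m,j)}=1$ in $\R^n$.

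The only place one really has to be careful is the interplay between pointwise and distributional products: multiplication of a distribution by a $C^\infty$ function, and the Leibniz rule for such a product, are standard, so each step is legitimate. There are no convergence issues since everything happens inside the space of tempered distributions, and no delta-type correction terms can creep in because we never divide by $\eta_j+i\eta_1$ after taking a derivative---we always isolate $(\eta_j+i\eta_1)^m\partial_{\eta_j}^{m-1}\frac{1}{\eta_j+i\eta_1}$ as a whole. The main (mild) obstacle is thus simply bookkeeping: checking that the formal pointwise identity, which is obvious away from the codimension-two set $\{\eta_1=\eta_j=0\}$, is not spoiled by distributions supported on that set, and the inductive scheme above is precisely what rules this out.
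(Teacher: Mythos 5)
Your proof is correct, but it takes a genuinely different route from the paper's. The paper argues directly: it pairs $s^m(\eta_j+i\eta_1)^m E_\zeta^{(m,j)}$ with a test function $\varphi$, rewrites the pairing as $\frac{1}{(m-1)!}\lim_{\varepsilon\to 0}$ of an integral over $\R^2\setminus B_\varepsilon(0)$ (times $\R^{n-2}$), integrates by parts $m-1$ times in $\eta_j$, and checks that each boundary term on $\p B_\varepsilon(0)$ is $\mathcal{O}(\varepsilon^2)$ by combining $\p_{\eta_j}^{(l)}\big(\frac{1}{\eta_j+i\eta_1}\big)=(-1)^l l!\,(\eta_j+i\eta_1)^{-l-1}$ with $\p_{\eta_j}^{(k)}\big((\eta_j+i\eta_1)^m\varphi\big)=\mathcal{O}(|(\eta_1,\eta_j)|^{m-k})$; the surviving bulk term gives $(m-1)!\int\varphi$. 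Your induction on $m$ via the Leibniz rule for smooth-function-times-distribution products (together with associativity $f(gu)=(fg)u$) reaches the same identity with no $\varepsilon$-excision and no boundary-term estimates at all, and your observation that one never needs to interpret $(\eta_j+i\eta_1)^{-l-1}$ as a distribution on its own is exactly the right point to emphasize. The trade-off is that the paper's explicit regularized pairing formula is not just a proof device: essentially the same computation is reused later (in the proof of Proposition \ref{prop_uniform_l_p_poly}) to evaluate $\langle(\hat f\chi_{j,\pm}(s)\circ(\Phi^{(j,\pm)}(s))^{-1})E_\zeta^{(m,j)},\varphi\rangle$ for $\hat f$ vanishing near the characteristic set, so the direct approach earns its keep elsewhere; your argument is the cleaner proof of the lemma in isolation.
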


\begin{proof}
To fix the ideas let us take $j=2$. Let $\varphi\in \mathcal{S}(\R^n)$.  We have
\begin{equation}
\label{eq_5_3}
\begin{aligned}
\langle s^m(\eta_2&+i\eta_1)^m E_\zeta^{(m,2)},\varphi \rangle=\frac{1}{(m-1)!}\int_{\R^n} \frac{1}{\eta_2+i\eta_1} \p_{\eta_2}^{(m-1)}( (\eta_2+i\eta_1)^m \varphi(\eta))d\eta\\
&= \frac{1}{(m-1)!}\lim_{\varepsilon\to 0}\int_{\R^{n-2}}\int_{\R^2\setminus  B_\varepsilon(0)} \frac{1}{\eta_2+i\eta_1} \p_{\eta_2}^{(m-1)}( (\eta_2+i\eta_1)^m \varphi(\eta))d\eta_1 d\eta_2 d\eta'',
\end{aligned}
\end{equation}
where $\eta=(\eta_1,\eta_2,\eta'')$ and $B_\varepsilon(0)=\{(\eta_1,\eta_2)\in \R^2: |\eta_1|^2+|\eta_2|^2\le \varepsilon^2\}$.
Here and in what follows $\langle \cdot, \cdot \rangle$ denotes the distributional duality.

Integrating by parts and recalling that $(\eta_2+i\eta_1)^m\varphi\in \mathcal{S}(\R^n)$, we get
\begin{align*}
I_\varepsilon:&=\int_{\R^2\setminus  B_\varepsilon(0)} \frac{1}{\eta_2+i\eta_1} \p_{\eta_2}^{(m-1)}( (\eta_2+i\eta_1)^m \varphi(\eta))d\eta_1 d\eta_2\\
&= (-1)^{m-1}  \int_{\R^2\setminus B_\varepsilon(0)} \p_{\eta_2}^{(m-1)}\bigg(  \frac{1}{\eta_2+i\eta_1}\bigg) (\eta_2+i\eta_1)^m \varphi(\eta)d\eta_1 d\eta_2\\
&+\sum_{k=0}^{m-2} (-1)^{m-2-k}\int_{\p B_\varepsilon(0)} \nu_2   \p_{\eta_2}^{(m-2-k)}\bigg(  \frac{1}{\eta_2+i\eta_1}\bigg)  \p_{\eta_2}^{(k)}( (\eta_2+i\eta_1)^m \varphi(\eta)) dS,
\end{align*}
where $dS$ is the Euclidean arc measure on $\p B_\varepsilon(0)$ and $\nu=(\nu_1,\nu_2)$ is the interior unit normal to $\p B_\varepsilon(0)$.  

Writing $\eta_1=\varepsilon\cos\theta$, $\eta_2=\varepsilon\sin\theta$  on $\p B_\varepsilon(0)$, and using the facts that
\begin{equation}
\label{eq_5_4}
\p_{\eta_2}^{(l)} \bigg(\frac{1}{\eta_2+i\eta_1}\bigg)=(-1)^l l! \frac{1}{(\eta_2+i\eta_1)^{l+1}}
\end{equation}
and 
\[
\p_{\eta_2}^{(k)}( (\eta_2+i\eta_1)^m \varphi(\eta))=\mathcal{O}(|(\eta_1,\eta_2)|^{m-k}), \quad k\le m,
\]
we see that 
\[
\bigg| \int_{\p B_\varepsilon(0)} \nu_2   \p_{\eta_2}^{(m-2-k)}\bigg(  \frac{1}{\eta_2+i\eta_1}\bigg)  \p_{\eta_2}^{(k)}( (\eta_2+i\eta_1)^m \varphi(\eta)) dS \bigg|\le \int_0^{2\pi}\mathcal{O}(\varepsilon^2)d\theta \to 0, 
\]
as $\varepsilon \to 0$. Therefore, also using \eqref{eq_5_4}, we obtain that 
\[
\lim_{\varepsilon\to 0} I_\varepsilon= (m-1)!\int_{\R^2} \varphi(\eta)d\eta_1 d\eta_2.
\]
This together with \eqref{eq_5_3} implies \eqref{eq_5_2}. The proof is complete. 
\end{proof}
 
Lemma \ref{lem_fundam_solution} implies that the distribution
\[
\chi_{j,\pm}(s)((\Phi^{(j,\pm)}(s))^{-1}(\eta)) E_\zeta^{(m,j)}\in \mathcal{E}'(\R^n)
\] 
is a solution of \eqref{eq_5_1_2}.  Thus, 
\begin{equation}
\label{eq_5_5}
\hat w_{j,\pm}(\xi)=(\Phi^{(j,\pm)}(s))^*( (\chi_{j,\pm}(s) \circ (\Phi^{(j,\pm)}(s))^{-1})  E_\zeta^{(m,j)})
\in \mathcal{E}'(V_{j,\pm}(s))
\end{equation}
is a solution of  \eqref{eq_5_1}. Here $(\Phi^{(j,\pm)}(s))^*$ is the pullback  by the diffeomorphism $\Phi^{(j,\pm)}(s)$, see \cite{Friedlander_book}.

Let
\begin{equation}
\label{eq_5_fundam_sol_f}
\begin{aligned}
 \mathcal{S}'(\R^n)\ni E_\zeta^{(m)}=\hat w=\frac{\chi_1(s)}{(p_\zeta)^m} &+
\sum_{j=2}^n  (\Phi^{(j,+)}(s))^*( (\chi_{j,+}(s) \circ (\Phi^{(j,+)}(s))^{-1})  E_\zeta^{(m,j)})\\ 
&+ \sum_{j=2}^n  (\Phi^{(j,-)}(s))^*( (\chi_{j,-}(s) \circ (\Phi^{(j,-)}(s))^{-1})  E_\zeta^{(m,j)}).
\end{aligned}
\end{equation}

Summing up the discussion so far, we have the following result. 
\begin{prop}
\label{prop_fundam_solution_polyharmonic}
The  distribution $g_\zeta^{(m)}= \mathcal{F}^{-1} (E_\zeta^{(m)})$  is a tempered fundamental solution of the operator $(-\Delta-2\zeta\cdot \nabla)^m$.  The convolution operator 
\begin{equation}
\label{eq_solution_op}
G_\zeta^{(m)} f=g_\zeta^{(m)}* f
\end{equation}
is  continuous $\mathcal{S}(\R^n)\to \mathcal{S}'(\R^n)$, and $w=G_\zeta^{(m)}f$ is a solution to the equation
\[
(-\Delta-2\zeta\cdot\nabla)^m w=f\quad \text{in}\quad \R^n.
\]
\end{prop}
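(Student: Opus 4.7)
The plan is to verify, on the Fourier side, that $p_\zeta(\xi)^m\, E_\zeta^{(m)}(\xi) = 1$ in $\mathcal{S}'(\R^n)$, and then transfer everything back to $x$-space and to the convolution operator by standard arguments about convolutions of tempered distributions with Schwartz functions.

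First I would treat the ``easy'' term $\hat w_1 = \chi_1(s)/p_\zeta^m$ from \eqref{eq_5_0_3}. Because $\chi_1(s)$ vanishes on $N_{s/2n}(\Sigma(s))$, the quotient is smooth away from $\Sigma_\zeta$, and on the support of $\chi_1(s)$ Lemma \ref{lem_behavior_p_zeta} gives $|p_\zeta| \gtrsim \min(s\cdot s/n,\,|\xi|^2)$, so $\hat w_1 \in C^\infty(\R^n)\cap \mathcal{S}'(\R^n)$ and trivially $p_\zeta^m \hat w_1 = \chi_1(s)$.

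Next I would verify the identity $p_\zeta^m \hat w_{j,\pm} = \chi_{j,\pm}(s)$ for each of the pieces in \eqref{eq_5_5}. The map $\Phi^{(j,\pm)}(s): V_{j,\pm}(s)\to \Phi^{(j,\pm)}(s)(V_{j,\pm}(s))$ is a global diffeomorphism (as established before the statement), so pullback of distributions is well defined; moreover, by the direct computation in the new coordinates, $p_\zeta\circ (\Phi^{(j,\pm)}(s))^{-1}(\eta) = s(\eta_j+i\eta_1)$. Multiplication by the smooth function $p_\zeta^m$ commutes with pullback, so applying $p_\zeta^m$ to $\hat w_{j,\pm}$ is the same as pulling back the product $s^m(\eta_j+i\eta_1)^m\,(\chi_{j,\pm}(s)\circ (\Phi^{(j,\pm)}(s))^{-1})\,E_\zeta^{(m,j)}$. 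By Lemma \ref{lem_fundam_solution} the factor $s^m(\eta_j+i\eta_1)^m E_\zeta^{(m,j)}=1$, so this collapses to $(\Phi^{(j,\pm)}(s))^*(\chi_{j,\pm}(s)\circ (\Phi^{(j,\pm)}(s))^{-1}) = \chi_{j,\pm}(s)$, which is exactly \eqref{eq_5_1}. The compact support of $\hat w_{j,\pm}$ in $V_{j,\pm}(s)$, which ensures tempered-ness and allows this manipulation with no growth concerns, is the cleanest part.

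Adding all pieces and using the partition-of-unity identity $\chi_1(s)+\sum_{j=2}^n(\chi_{j,+}(s)+\chi_{j,-}(s))=1$ on $\R^n$, I obtain $p_\zeta(\xi)^m E_\zeta^{(m)}(\xi)=1$ in $\mathcal{S}'(\R^n)$. Taking inverse Fourier transform gives $(-\Delta-2\zeta\cdot\nabla)^m g_\zeta^{(m)}=\delta$, i.e.\ $g_\zeta^{(m)}$ is a tempered fundamental solution. For the convolution statement, continuity of $G_\zeta^{(m)}:\mathcal{S}(\R^n)\to \mathcal{S}'(\R^n)$ is the standard fact that convolution of a tempered distribution with a Schwartz function lies in $\mathcal{S}'(\R^n)\cap C^\infty(\R^n)$ and depends continuously on the Schwartz input, and the PDE $(-\Delta-2\zeta\cdot\nabla)^m w=f$ follows from the identity $(-\Delta-2\zeta\cdot\nabla)^m(g_\zeta^{(m)}*f)=((-\Delta-2\zeta\cdot\nabla)^m g_\zeta^{(m)})*f=\delta*f=f$.

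The main obstacle, already dispatched by Lemma \ref{lem_fundam_solution}, is interpreting the singular factor $1/(\eta_j+i\eta_1)^m$; everything else in this proposition is bookkeeping: checking that the pullbacks, partition of unity, and distributional multiplication combine consistently to recover $1$ on the Fourier side.
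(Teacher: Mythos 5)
Your proposal is correct and follows exactly the route of the paper, which states this proposition as a summary of the preceding construction: the identity $p_\zeta^m\hat w_1=\chi_1(s)$ away from the characteristic set, the identity $p_\zeta^m\hat w_{j,\pm}=\chi_{j,\pm}(s)$ obtained by pulling back Lemma \ref{lem_fundam_solution} through the diffeomorphisms $\Phi^{(j,\pm)}(s)$ and using $\Phi^*(fu)=(\Phi^*f)(\Phi^*u)$, the partition-of-unity summation, and the standard facts about convolution of a tempered distribution with a Schwartz function. No gaps.
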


Our next goal is to extend the convolution operator $G_\zeta^{(m)}$ to a bounded operator between suitable  weighed $L^2$ spaces, introduced in \eqref{eq_int_weighted_L_2},  and to  obtain  estimates for the corresponding operator norm.  To that end, we shall prepare by proving some auxiliary results.  

\begin{lem}
\label{lem_diffeom_invarience}
Let $W_1$ and $W_2$ be open subsets of $\R^n$,  let  $\Phi:W_1\to W_2$ be a $C^\infty$ diffeomorphism and let $W_2'\Subset W_2$ be open. Then for $\sigma \in \R$, we have 
\begin{equation}
\label{eq_5_6}
\|\mathcal{F}^{-1} (\Phi^* u)\|_{L^2_\sigma(\R^n)}\le C\|\mathcal{F}^{-1}(u)\|_{L^2_\sigma(\R^n)}, \quad u\in \mathcal{E}'(W_2'),\quad \mathcal{F}^{-1}(u)\in L^2_\sigma(\R^n).
\end{equation}
Here the constant $C$ depends only on $\sigma$,  $\|\p^{\alpha} \Phi\|_{L^\infty(\Phi^{-1}(W_2'))}$, $\|\p^{\alpha} \Phi^{-1}\|_{L^\infty(W_2')}$ for $|\alpha|\ge 1$.
\end{lem}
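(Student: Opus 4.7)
The strategy is to translate the weighted $L^2$ estimate for inverse Fourier transforms into the classical diffeomorphism invariance of $L^2$-Sobolev spaces, and then to invoke the standard proof of the latter.

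The first step is the Plancherel-type identity, valid for all $\sigma \in \R$,
\[
\|\mathcal{F}^{-1}(v)\|_{L^2_\sigma(\R^n)} = c_{n,\sigma}\,\|v\|_{H^\sigma(\R^n)}, \qquad v\in \mathcal{S}'(\R^n),
\]
with constant depending only on $n$ and $\sigma$. For $\sigma = 2k$ a non-negative even integer, this is immediate from $\mathcal{F}((1+|x|^2)^k\mathcal{F}^{-1}v) = (1-\Delta_\xi)^k v$ combined with Plancherel's theorem; the case of general real $\sigma$ follows by interpreting $\langle x\rangle^\sigma$ as a Bessel Fourier multiplier, or equivalently by complex interpolation. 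Consequently, \eqref{eq_5_6} becomes the Sobolev-invariance inequality
\[
\|\Phi^* u\|_{H^\sigma(\R^n)} \le C\,\|u\|_{H^\sigma(\R^n)}, \qquad u\in H^\sigma(\R^n)\cap \mathcal{E}'(W_2').
\]

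Since $u$ is compactly supported in $W_2' \Subset W_2$, I would then extend $\Phi$ outside a compact neighborhood of $\Phi^{-1}(\overline{W_2'})$ by a standard cutoff construction to a $C^\infty$ diffeomorphism $\widetilde\Phi : \R^n \to \R^n$ agreeing with $\Phi$ on $\Phi^{-1}(W_2')$ (so that $\widetilde\Phi^* u = \Phi^* u$), and with all derivatives of $\widetilde\Phi$ and $\widetilde\Phi^{-1}$ of order $\ge 1$ bounded on $\R^n$ by the corresponding $L^\infty$ norms on $\Phi^{-1}(W_2')$ and $W_2'$. This reduces the problem to the well-known invariance of $H^\sigma(\R^n)$ under global smooth diffeomorphisms with bounded derivatives. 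The base case $\sigma = 0$ is the change-of-variables formula, with constant depending on $\|\det D\widetilde\Phi^{-1}\|_{L^\infty}$; a positive integer $\sigma = k$ follows by the Fa\`a di Bruno formula, which expresses $\p^\alpha(u\circ\widetilde\Phi)$ for $|\alpha|\le k$ as a sum of products $((\p^\beta u)\circ \widetilde\Phi)\cdot P_{\alpha,\beta}(\p\widetilde\Phi,\dots,\p^k\widetilde\Phi)$ and reduces to the $\sigma=0$ case; a negative integer $\sigma = -k$ follows by duality applied to $\widetilde\Phi^{-1}$ (using $\langle \Phi^* u, \varphi\rangle = \langle u, (\Phi^{-1})^*\varphi \cdot |\det D\Phi^{-1}|\rangle$); and an arbitrary real $\sigma$ by complex interpolation between consecutive integer orders.

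The principal technical obstacle I expect is maintaining the explicit dependence of the constant on only finitely many $L^\infty$ norms of derivatives of $\Phi$ and $\Phi^{-1}$ through all of these steps, in particular through the complex interpolation. This dependence is standard but requires careful bookkeeping in the Fa\`a di Bruno expansion and in the choice of interpolation scale, so that the integer cases come with quantitatively explicit bounds that can be interpolated.
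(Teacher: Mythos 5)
Your proposal follows exactly the paper's route: the Plancherel identity $\|\mathcal{F}^{-1}(u)\|_{L^2_\sigma}=c_n\|u\|_{H^\sigma}$ converts \eqref{eq_5_6} into the invariance of $H^\sigma(\R^n)$ under smooth diffeomorphisms for compactly supported distributions, which the paper then simply cites from H\"ormander (Theorem 2.6.1 of \emph{Linear partial differential operators}). The only difference is that you go on to sketch a proof of that classical invariance (integer cases, duality, interpolation) instead of citing it, so the two arguments are essentially the same.
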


\begin{proof}
First notice that 
\[
\|\mathcal{F}^{-1}(u)\|_{L^2_\sigma(\R^n)}^2=(2\pi)^{-n}\int_{\R^n}(1+|\xi|^2)^{\sigma} |\mathcal{F}(u)(-\xi)|^2d\xi =  (2\pi)^{-n}\|u\|_{H^\sigma(\R^n)}^2. 
\]
Therefore, \eqref{eq_5_6} is equivalent to the fact that
\[
\|\Phi^* u\|_{H^\sigma(\R^n)} \le C\|u\|_{H^\sigma(\R^n)}, \quad u\in \mathcal{E}'(W_2')\cap H^\sigma(\R^n),
\]
which expresses the invariance of the Sobolev space $H^\sigma(\R^n)$ under a smooth diffeomorphism, see   \cite[Theorem 2.6.1]{Hormander_book_linear}. The proof is complete. 
\end{proof}

We shall have to apply Lemma \ref{lem_diffeom_invarience} to the diffeomorphisms $\Phi^{(j,\pm)}(s)$, $j=2,\dots, n$, introduced in \eqref{eq_5_diffeomorphism_def},  which depend on a large parameter $s$, and to make sure that the constants in \eqref{eq_5_6} are independent of $s$ we require the following result. 

\begin{lem}
\label{lem_diffeom_invarience_s}
Let $\Phi^{(j,\pm)}(s)$, $j=2,\dots, n$,  be the diffeomorphism, defined in \eqref{eq_5_diffeomorphism_def}. Then for all $s\ge s_0>0$, we have
\[
\| \p^\alpha \Phi^{(j,\pm)}(s) \|_{L^\infty(V_{j,\pm}(s))}\le C_\alpha,\ \|\p^{\alpha}(\Phi^{(j,\pm)}(s))^{-1}\|_{L^\infty
(\Phi^{(j,\pm)}(s)(V_{j,\pm}(s)))
}\le C_\alpha, \ |\alpha|\ge 1,
\]
uniformly in $s$.  
\end{lem}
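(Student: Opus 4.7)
The plan is to reduce everything to the already-known $s=1$ case via the scaling identity
\[
\Phi^{(j,\pm)}(s)(\xi)=s\,\Phi^{(j,\pm)}(1)(\xi/s),
\]
noted earlier in the paper. Differentiating this $|\alpha|$ times with respect to $\xi$ gives
\[
\partial^\alpha \Phi^{(j,\pm)}(s)(\xi)=s^{1-|\alpha|}\bigl(\partial^\alpha \Phi^{(j,\pm)}(1)\bigr)(\xi/s),
\]
and, similarly, from $(\Phi^{(j,\pm)}(s))^{-1}(\eta)=s\,(\Phi^{(j,\pm)}(1))^{-1}(\eta/s)$ one gets
\[
\partial^\alpha (\Phi^{(j,\pm)}(s))^{-1}(\eta)=s^{1-|\alpha|}\bigl(\partial^\alpha (\Phi^{(j,\pm)}(1))^{-1}\bigr)(\eta/s).
\]
So the claim will reduce to two facts: (a) $\Phi^{(j,\pm)}(1)$ and its inverse have all their derivatives bounded on the relevant sets, and (b) the prefactor $s^{1-|\alpha|}$ is uniformly bounded for $s\ge s_0$ and $|\alpha|\ge 1$.

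For (a), recall from \eqref{eq_5_diffeomorphism_def} that $\Phi^{(j,\pm)}(1)$ is just a polynomial in $\xi$. The set $V_{j,\pm}(1)$ lies inside $N_1(\Sigma(1))^0$, which is a tubular neighborhood of the compact codimension-$2$ sphere $\Sigma(1)$, hence is bounded. So all derivatives of the polynomial $\Phi^{(j,\pm)}(1)$ are bounded on $V_{j,\pm}(1)$. For the inverse, the image $\Phi^{(j,\pm)}(1)(V_{j,\pm}(1))$ is the continuous image of a bounded set and therefore also bounded; moreover, the Jacobian determinant satisfies $\tfrac{2}{n}<|\det[\partial\eta/\partial\xi]|<8$ on $V_{j,\pm}(1)$, so it is bounded away from zero. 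Consequently $(\Phi^{(j,\pm)}(1))^{-1}$ is smooth on its (bounded) domain with all derivatives bounded, by the inverse function theorem combined with the Fa\`a di Bruno formula (which expresses derivatives of the inverse as polynomials in the derivatives of $\Phi^{(j,\pm)}(1)$ divided by powers of its Jacobian).

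For (b), when $s\ge s_0$ and $|\alpha|\ge 1$, one has $s^{1-|\alpha|}\le\max(1,s_0^{1-|\alpha|})$, which is a finite constant depending only on $s_0$ and $|\alpha|$. Combining this with (a) yields the uniform estimates stated in the lemma. There is really no significant obstacle here; the only point requiring care is confirming that the image sets $\Phi^{(j,\pm)}(1)(V_{j,\pm}(1))$ remain bounded so that we can bound the inverse's derivatives uniformly, but this follows immediately from the boundedness of $V_{j,\pm}(1)$ and the continuity (in fact, polynomial nature) of $\Phi^{(j,\pm)}(1)$.
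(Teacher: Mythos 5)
Your proof is correct and follows essentially the same route as the paper: both reduce to the $s=1$ case via the scaling identities $\partial^\alpha \Phi^{(j,\pm)}(s)(\xi)=s^{1-|\alpha|}(\partial^\alpha \Phi^{(j,\pm)}(1))(\xi/s)$ and its analogue for the inverse, bound $s^{1-|\alpha|}$ using $s\ge s_0$ and $|\alpha|\ge 1$, and then invoke boundedness of all derivatives of $\Phi^{(j,\pm)}(1)$ and its inverse on the bounded, $s$-independent sets $V_{j,\pm}(1)$ and their images. Your justification of the boundedness of the derivatives of the inverse (via the lower bound $2/n$ on the Jacobian determinant together with the chain rule) is a slightly more explicit version of the paper's remark that the diffeomorphism extends smoothly to a neighborhood of $\overline{V_{j,\pm}(1)}$.
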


\begin{proof}
Recall that 
\[
V_{j,\pm}(s)=sV_{j,\pm}(1),\quad
\Phi^{(j,\pm)}(s)(\xi)=s\Phi^{(j,\pm)}(1)(\xi/s),
\]
and therefore, 
\begin{equation}
\label{eq_lem_diff_s_1}
\p^\alpha_\xi (\Phi^{(j,\pm)}(s))(\xi) =s^{1-|\alpha|} \p^\alpha_\xi (\Phi^{(j,\pm)}(1))(\xi/s).
\end{equation}

Now writing 
\[
\eta=\Phi^{(j,\pm)}(s)(\xi)=s\Phi^{(j,\pm)}(1)(\xi/s),
\]
we see that 
\[
(\Phi^{(j,\pm)}(s))^{-1}(\eta)=\xi=s (\Phi^{(j,\pm)}(1))^{-1}(\eta/s).
\]
Hence, 
\begin{equation}
\label{eq_lem_diff_s_2}
\p^{\alpha}_\eta((\Phi^{(j,\pm)}(s))^{-1})(\eta)=s^{1-|\alpha|} \p_\eta^{\alpha}(\Phi^{(j,\pm)}(1))^{-1}(\eta/s).
\end{equation}

The map
\[
\Phi^{(j,\pm)}(1): V_{j,\pm}(1)\to \Phi^{(j,\pm)}(1)(V_{j,\pm}(1))
\]
is a smooth diffeomorphism between bounded open sets, which are independent of $s$, and it can easily be seen to extend to a smooth diffeomorphism on  a neighborhood of $\overline{V_{j,\pm}(1)}$. The claim of the Lemma then follows from \eqref{eq_lem_diff_s_1} and \eqref{eq_lem_diff_s_2}. The proof is complete. 
\end{proof}

We shall need the following consequence of \cite[Lemma 3.1]{Sylvester_Uhlmann_1987}. 
\begin{lem}
\label{lem_SU_modified}
Let $n\ge 3$, $x=(x_1,x_2,x'')\in \R^n$,  and let 
\[
H(x_1,x_2)=\frac{1}{|x_2+ix_1|}\in L^1_{\emph{\text{loc}}}(\R^2)\cap\mathcal{S}'(\R^2).
\] 
Then for  $-1<\sigma<0$, 
\begin{equation}
\label{eq_SU_module}
\| (H(x_1,x_2)\otimes \delta(x''))* f\|_{L^2_\sigma}\le C\|f\|_{L^2_{\sigma+1}},\quad f\in\mathcal{S}(\R^n).
\end{equation}
\end{lem}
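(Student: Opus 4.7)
The plan is to treat this as a two-dimensional weighted $L^2$ convolution estimate in which the variables $x''\in \R^{n-2}$ play the role of passive spectators. The tensor-product kernel $H(x_1,x_2)\otimes \delta(x'')$ convolves only in the first two coordinates, so I would first observe that, for each fixed $x''$,
\[
((H\otimes \delta)*f)(x_1,x_2,x'')=(H*_{(1,2)} f(\cdot,\cdot,x''))(x_1,x_2),
\]
where $*_{(1,2)}$ denotes convolution on $\R^2$. This immediately reduces the proof to a slicewise two-dimensional estimate.

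The second step is to invoke the two-dimensional weighted $L^2$ bound contained in Lemma~3.1 of \cite{Sylvester_Uhlmann_1987}, namely that for $-1<\sigma<0$ and all $g\in \mathcal{S}(\R^2)$,
\[
\|H*g\|_{L^2_\sigma(\R^2)}\le C(\sigma)\,\|g\|_{L^2_{\sigma+1}(\R^2)},
\]
which is either read off directly from the $n=2$ case or extracted as the scale-invariant limit of the Faddeev--Green function estimate applied to the 2D Riesz kernel $|x|^{-1}$. Applying this with $g=f(\cdot,\cdot,x'')$, squaring, and integrating over $x''\in \R^{n-2}$ by Fubini yields
\[
\int_{\R^n}(1+x_1^2+x_2^2)^{\sigma}|((H\otimes \delta)*f)(x)|^2\,dx\le C\int_{\R^n}(1+y_1^2+y_2^2)^{\sigma+1}|f(y)|^2\,dy.
\]

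Finally, I would compare these partial weights with the full weights $(1+|x|^2)^\sigma$ appearing in the definition of $L^2_\sigma(\R^n)$. The exponent signs align in exactly the way we need: since $\sigma<0$, $(1+|x|^2)^\sigma\le (1+x_1^2+x_2^2)^\sigma$, which lower-bounds the left-hand integral by $\|(H\otimes \delta)*f\|_{L^2_\sigma(\R^n)}^2$; since $\sigma+1>0$, $(1+y_1^2+y_2^2)^{\sigma+1}\le (1+|y|^2)^{\sigma+1}$, which upper-bounds the right-hand integral by $C\|f\|_{L^2_{\sigma+1}(\R^n)}^2$. Chaining these two comparisons delivers \eqref{eq_SU_module}.

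The main obstacle, in my view, is the second step --- correctly extracting the two-dimensional weighted $L^2$ estimate for convolution with the 2D Riesz kernel $H(x)=|x|^{-1}$ in the sharp range $-1<\sigma<0$. The range is dictated by the behavior of $\widehat H(\xi)\sim |\xi|^{-1}$, which sits exactly at the borderline of local square integrability in $\R^2$, so weighted bounds fail at both endpoints. Once this 2D input is secured, the remainder is just Fubini together with an elementary pointwise comparison of weights.
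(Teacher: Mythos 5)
Your argument is correct and is essentially the paper's own proof: reduce to a slicewise two-dimensional convolution in $(x_1,x_2)$, invoke the weighted $L^2$ estimate for convolution with $1/|y_2+iy_1|$ extracted from the proof of Lemma~3.1 in \cite{Sylvester_Uhlmann_1987}, and then use $\sigma<0$ and $\sigma+1>0$ to replace the partial weights $(1+x_1^2+x_2^2)^{\sigma}$ and $(1+y_1^2+y_2^2)^{\sigma+1}$ by the full weights before (or after) integrating in $x''$. The only difference is the order in which the weight comparison and the Fubini integration are performed, which is immaterial.
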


\begin{proof}
Writing $x'=(x_1,x_2)$,  we have
\[
(H\otimes \delta(x''))* f=(H\otimes \delta(x''))_y(f(x-y))=\int_{\R^2} \frac{f(x'-y',x'')}{|y_2+i y_1|}dy_1dy_2.
\]
By inspection of the proof of \cite[Lemma 3.1]{Sylvester_Uhlmann_1987}, we get 
\begin{align*}
\int_{\R^2} (1+|x'|^2)^\sigma& \bigg| \int_{\R^2} \frac{f(x'-y',x'')}{|y_2+i y_1|}dy_1dy_2 \bigg|^2 dx_1 dx_2\\
&\le C
\int_{\R^2} (1+|x'|^2)^{\sigma+1} |f(x',x'')|^2dx_1dx_2,
\end{align*}
for all $x''\in \R^{n-2}$. Using the fact that $\sigma<0$
and $\sigma+1>0$, we obtain that 
\begin{align*}
\int_{\R^2} (1+|x|^2)^\sigma& \bigg| \int_{\R^2} \frac{f(x'-y',x'')}{|y_2+i y_1|}dy_1dy_2 \bigg|^2 dx_1 dx_2\\
&\le C
\int_{\R^2} (1+|x|^2)^{\sigma+1} |f(x',x'')|^2dx_1dx_2,
\end{align*}
and hence, the estimate \eqref{eq_SU_module} follows by integration with respect to $x''$. The proof is complete. 
\end{proof}

Lemma \ref{lem_SU_modified} will be used in the proof of the following result which will be needed later.

\begin{lem}
\label{lem_SU_modified_second}
Let $n\ge 3$,   $x=(x_1,x_2,x'')\in \R^n$, and $m=1,2,\dots$. Then for $-m<\sigma<1-m$, we have 
\begin{equation}
\label{eq_SU_module_next}
\bigg\| \bigg(\frac{x_2^{m-1}}{x_2+ix_1}\otimes \delta(x'')\bigg)*g  \bigg\|_{L^2_\sigma}\le C \|g\|_{L^2_{\sigma+2m-1}}, \quad g\in \mathcal{S}(\R^n).
\end{equation}
\end{lem}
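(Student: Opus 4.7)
My plan is to reduce the estimate directly to Lemma \ref{lem_SU_modified} by a single pointwise split of the kernel, with no need for induction on $m$. Since $|y_2+iy_1|=(y_1^2+y_2^2)^{1/2}$, the kernel $\frac{y_2^{m-1}}{y_2+iy_1}\otimes\delta(y'')$ has modulus $\frac{|y_2|^{m-1}}{|y_2+iy_1|}\otimes\delta(y'')$, so I may replace $g$ by $|g|$ and work with a non-negative integrand. Next, applying the elementary inequality $|y_2|^{m-1}\le C_m(|x_2|^{m-1}+|x_2-y_2|^{m-1})$ (valid for all $m\ge 1$) inside the convolution integral, I dominate the modulus of the convolution by a sum of two terms,
\[
C_m|x_2|^{m-1}\,\bigl[(H\otimes\delta(x''))*|g|\bigr](x) \;+\; C_m\,\bigl[(H\otimes\delta(x''))*h\bigr](x),
\]
where $H(y_1,y_2):=1/|y_2+iy_1|$ and, after the substitution $z=x-y$ in the second term, $h(z):=|z_2|^{m-1}|g(z)|$. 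The essential combinatorial move is this change of variable: the factor $|x_2-y_2|^{m-1}$ reappears as $|z_2|^{m-1}$ absorbed \emph{inside} the argument of the convolution, rather than hanging outside as a multiplier.

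Next, I invoke Lemma \ref{lem_SU_modified} at the weight $\sigma'=\sigma+m-1$. The hypothesis $-m<\sigma<1-m$ is exactly equivalent to $\sigma'\in(-1,0)$, so the lemma applies. (Although stated for Schwartz $f$, its proof is an $x''$-slicewise estimate for the positive kernel $H$, so it extends without difficulty to non-negative inputs like $|g|$ and $h$.) Using in addition the trivial inequalities $\|\cdot\|_{L^2_\sigma}\le\|\cdot\|_{L^2_{\sigma+m-1}}$ (valid for $m\ge 1$ since $\langle x\rangle\ge 1$), $|x_2|^{m-1}\le\langle x\rangle^{m-1}$, and $|z_2|^{m-1}\le\langle z\rangle^{m-1}$, I control the first piece by
\[
\||x_2|^{m-1}\bigl((H\otimes\delta)*|g|\bigr)\|_{L^2_\sigma}\le\|(H\otimes\delta)*|g|\|_{L^2_{\sigma+m-1}}\le C\|g\|_{L^2_{\sigma+m}}\le C\|g\|_{L^2_{\sigma+2m-1}},
\]
and the second piece by
\[
\|(H\otimes\delta)*h\|_{L^2_\sigma}\le\|(H\otimes\delta)*h\|_{L^2_{\sigma+m-1}}\le C\|h\|_{L^2_{\sigma+m}}\le C\|g\|_{L^2_{\sigma+2m-1}}.
\]
Summing the two bounds proves the lemma.

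The only subtle point, and what I expect to be the main thing to verify, is the bookkeeping of weights. For the tight (second) piece, the $m-1$ units coming from $|z_2|^{m-1}\le\langle z\rangle^{m-1}$, the $m-1$ units coming from the trivial inequality $\|\cdot\|_{L^2_\sigma}\le\|\cdot\|_{L^2_{\sigma+m-1}}$, and the single unit gained by one application of Lemma \ref{lem_SU_modified} add up to exactly the shift $2m-1$ appearing in the statement. The case $m=1$ is automatically subsumed: the split $|y_2|^0=1$ is vacuous and the argument collapses to a single application of Lemma \ref{lem_SU_modified} to $|g|$.
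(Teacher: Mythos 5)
Your proof is correct and follows essentially the same route as the paper's: the same splitting $|y_2|^{m-1}\lesssim |x_2|^{m-1}+|x_2-y_2|^{m-1}$ inside the convolution, reduction to Lemma \ref{lem_SU_modified} at the shifted weight $\sigma+m-1\in(-1,0)$, and the same bookkeeping assembling the total shift $2m-1$ (the paper uses $|x'|^{m-1}$ and $|x'-y'|^{m-1}$ in place of your $|x_2|^{m-1}$ and $|x_2-y_2|^{m-1}$, which is immaterial). Your remark that Lemma \ref{lem_SU_modified} applies to the non-negative, non-Schwartz inputs $|g|$ and $h$ because its proof is a slicewise positive-kernel estimate is exactly the justification the paper relies on implicitly.
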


\begin{proof}
Writing $x'=(x_1,x_2)$, we have 
\begin{align*}
\bigg|  \bigg(\frac{x_2^{m-1}}{x_2+ix_1}\otimes \delta(x'')\bigg)*g \bigg|&\le \int_{\R^2}\frac{|y_2|^{m-1}}{|y_2+i y_1|} |g(x'-y',x'')|dy''\\
&\le C|x'|^{m-1} \int_{\R^2}\frac{1}{|y_2+i y_1|} |g(x'-y',x'')|dy''\\
&+ C \int_{\R^2}\frac{|x'-y'|^{m-1}}{|y_2+i y_1|} |g(x'-y',x'')|dy''\\
&\le C(1+|x|^2)^{(m-1)/2} \bigg( \bigg( \frac{1}{|x_2+ix_1|}\otimes \delta(x'')\bigg)*|g| \\
&+ 
 \bigg( \frac{1}{|x_2+ix_1|}\otimes \delta(x'')\bigg)*(|\cdot|^{m-1}|g|)
 \bigg).
\end{align*}
Thus, 
\begin{align*}
\bigg\| \bigg(\frac{x_2^{m-1}}{x_2+ix_1}\otimes \delta(x'')\bigg)*g  \bigg\|_{L^2_\sigma}&\le 
C \bigg\| \bigg( \frac{1}{|x_2+ix_1|}\otimes \delta(x'')\bigg)*|g| \bigg\|_{L^2_{\sigma+m-1}}\\
&+C \bigg\| \bigg( \frac{1}{|x_2+ix_1|}\otimes \delta(x'')\bigg)*(|\cdot|^{m-1}|g|) \bigg\|_{L^2_{\sigma+m-1}}.
\end{align*}
As $-1<\sigma+m-1<0$, applying Lemma \ref{lem_SU_modified}, we see that the expression above does not exceed  
\[
 C\|g\|_{L^2_{\sigma+m}}+C\||\cdot|^{m-1} g\|_{L^2_{\sigma+m}}\le C\|g\|_{L^2_{\sigma+2m-1}},
\]
which shows \eqref{eq_SU_module_next}. The proof is complete. 
\end{proof}

For future reference we shall also need the following result.  
\begin{lem}
\label{lem_eq_5_9}
We have
\begin{equation}
\label{eq_lem_eq_5_9}
\| f*\varphi  \|_{L^2_{\sigma}}\le C_\varphi \| f \|_{L^2_{\sigma}}, \quad f\in L^2_\sigma(\R^n),\quad \varphi\in \mathcal{S}(\R^n),
\end{equation}
where $\sigma>0$.  
\end{lem}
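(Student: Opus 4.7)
The plan is to reduce the weighted bound to the unweighted Young's inequality $\|g*h\|_{L^2}\le \|g\|_{L^2}\|h\|_{L^1}$ by moving the polynomial weight onto $f$ and $\varphi$ separately, using a Peetre-type inequality.

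First I would establish the elementary bound
\[
1+|x|^2\le 2(1+|x-y|^2)(1+|y|^2),\qquad x,y\in\R^n,
\]
which follows from $|x|^2\le 2|x-y|^2+2|y|^2$ and $(1+a)(1+b)\ge 1+a+b$ for $a,b\ge 0$. Raising to the power $\sigma/2\ge 0$ (this is where the hypothesis $\sigma>0$ is used) yields
\[
(1+|x|^2)^{\sigma/2}\le 2^{\sigma/2}(1+|x-y|^2)^{\sigma/2}(1+|y|^2)^{\sigma/2}.
\]

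Next, I would insert this into the convolution integral. For $f\in L^2_\sigma(\R^n)$ and $\varphi\in\mathcal{S}(\R^n)$, writing $w_\sigma(x)=(1+|x|^2)^{\sigma/2}$, we get pointwise
\[
w_\sigma(x)|(f*\varphi)(x)|\le 2^{\sigma/2}\int_{\R^n} w_\sigma(x-y)|f(x-y)|\cdot w_\sigma(y)|\varphi(y)|\,dy
= 2^{\sigma/2}\bigl[(w_\sigma|f|)*(w_\sigma|\varphi|)\bigr](x).
\]
Young's inequality for the $L^2*L^1\to L^2$ convolution then gives
\[
\|f*\varphi\|_{L^2_\sigma}=\|w_\sigma(f*\varphi)\|_{L^2}\le 2^{\sigma/2}\|w_\sigma f\|_{L^2}\|w_\sigma\varphi\|_{L^1}
=2^{\sigma/2}\|w_\sigma\varphi\|_{L^1}\|f\|_{L^2_\sigma}.
\]
Since $\varphi\in\mathcal{S}(\R^n)$, the weighted $L^1$ norm $\|w_\sigma\varphi\|_{L^1}$ is finite, and it plays the role of the constant $C_\varphi$.

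There is no real obstacle here; the only thing to note is that the Peetre-type submultiplicativity of $(1+|x|^2)^{\sigma/2}$ with exponent $\sigma\ge 0$ is precisely what makes the weight cooperate with the convolution structure, which is why the hypothesis $\sigma>0$ enters (the analogous statement for negative $\sigma$ would require the weight on the other side of the inequality and a dual argument).
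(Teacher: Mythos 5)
Your proof is correct and follows essentially the same strategy as the paper: split the weight $(1+|x|^2)^{\sigma/2}$ between the two convolution factors and apply Young's inequality, using $\sigma>0$ to make the weight cooperate. The only difference is cosmetic: you use the multiplicative (Peetre) form $(1+|x|^2)^{\sigma/2}\le 2^{\sigma/2}(1+|x-y|^2)^{\sigma/2}(1+|y|^2)^{\sigma/2}$, which yields a single application of Young, whereas the paper uses the additive bound $(1+|x|^2)^{\sigma/2}\le C((1+|x-y|^2)^{\sigma/2}+(1+|y|^2)^{\sigma/2})$ and handles the two resulting terms separately.
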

\begin{proof}
Using that
\[
(1+|x|^2)^{\sigma/2}\le C((1+|x-y|^2)^{\sigma/2} +(1+|y|^2)^{\sigma/2}), 
\] 
we have
\begin{align*}
\| f*\varphi  \|_{L^2_{\sigma}}&\le \bigg( \int \bigg(\int (1+|x|^2)^{\sigma/2} |\varphi(x-y)| |f(y)| dy \bigg)^2 dx  \bigg)^{1/2}\\
&\le C(\| (1+|\cdot|^2)^{\sigma/2}|\varphi|*|f|\|_{L^2} +\|  |\varphi|* (1+|\cdot|^2)^{\sigma/2} |f| \|_{L^2})\\
&\le C (\| (1+|\cdot|^2)^{\sigma/2}\varphi  \|_{L^1}\|f\|_{L^2}+ \|\varphi\|_{L^1}\|f\|_{L^2_\sigma})\le C_\varphi\|f\|_{L^2_\sigma},
\end{align*}  
where in the last line we have used  Young's inequality for convolutions. The proof is complete. 
\end{proof}

In our considerations we shall apply Lemma \ref{lem_eq_5_9} to the function $\varphi=\mathcal{F}^{-1}(\chi_{j,\pm}(s))$, $j=2,\dots,n$, where $\chi_{j,\pm}(s)$ is defined by \eqref{eq_5_partician_scalling}, and consequently depends on the large parameter $s$. In order to conclude that the constant in \eqref{eq_lem_eq_5_9} is independent of $s$, we shall prove the following result.  Notice that here it is important that our partition of unity is chosen so that it respects the scaling relations 
\eqref{eq_scalling_relations_sets}.

\begin{lem}
\label{lem_eq_5_9_chi}
Let  $\sigma>0$ and let $\chi_{j,\pm}(s)$ be defined by \eqref{eq_5_partician_scalling}. Then the norms
\[
\|\mathcal{F}^{-1}(\chi_{j,\pm}(s))\|_{L^1(\R^n)},\quad \| (1+|\cdot|^2)^{\sigma/2}\mathcal{F}^{-1}(\chi_{j,\pm} (s)) \|_{L^1(\R^n)}, \quad j=2,\dots, n, 
\]
are $\mathcal{O}(1)$,  uniformly in $s\ge s_0>0$. 
\end{lem}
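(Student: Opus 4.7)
The plan is to exploit the scaling relation \eqref{eq_5_partician_scalling}, namely $\chi_{j,\pm}(s)(\xi) = \chi_{j,\pm}(1)(\xi/s)$, to reduce everything to the fixed function $\chi_{j,\pm}(1) \in C_0^\infty(\R^n)$. Setting $\psi_{j,\pm} := \mathcal{F}^{-1}(\chi_{j,\pm}(1))$, which lies in $\mathcal{S}(\R^n)$ since $\chi_{j,\pm}(1)$ is compactly supported and smooth, the Fourier dilation identity gives
\[
\mathcal{F}^{-1}(\chi_{j,\pm}(s))(x) = s^n\, \psi_{j,\pm}(sx).
\]

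First I would handle the unweighted $L^1$-norm by the substitution $y=sx$:
\[
\|\mathcal{F}^{-1}(\chi_{j,\pm}(s))\|_{L^1(\R^n)} = \int_{\R^n} s^n |\psi_{j,\pm}(sx)|\,dx = \|\psi_{j,\pm}\|_{L^1(\R^n)},
\]
which is finite and independent of $s$. This already takes care of the first norm.

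For the weighted norm, the same substitution yields
\[
\|(1+|\cdot|^2)^{\sigma/2}\mathcal{F}^{-1}(\chi_{j,\pm}(s))\|_{L^1(\R^n)} = \int_{\R^n} \Bigl(1+\tfrac{|y|^2}{s^2}\Bigr)^{\sigma/2} |\psi_{j,\pm}(y)|\,dy.
\]
Here I use that $s \ge s_0 > 0$ to estimate $1 + |y|^2/s^2 \le 1 + |y|^2/s_0^2 \le \max(1,s_0^{-2})(1+|y|^2)$, so that for $\sigma > 0$,
\[
\Bigl(1+\tfrac{|y|^2}{s^2}\Bigr)^{\sigma/2} \le C_{s_0,\sigma}\,(1+|y|^2)^{\sigma/2},
\]
with $C_{s_0,\sigma}$ depending only on $s_0$ and $\sigma$. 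The integral is thus bounded by $C_{s_0,\sigma}\|(1+|\cdot|^2)^{\sigma/2}\psi_{j,\pm}\|_{L^1(\R^n)}$, which is finite because $\psi_{j,\pm} \in \mathcal{S}(\R^n)$ decays faster than any polynomial.

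There is no real obstacle here; the only subtlety is to notice that the lower bound $s \ge s_0 > 0$ is exactly what one needs to absorb the factor $1/s^2$ inside the weight, since for small $s$ the dilation would concentrate mass far from the origin and spoil a polynomially weighted estimate. The uniformity across $j=2,\dots,n$ and the choice of sign is immediate because there are only finitely many fixed Schwartz functions $\psi_{j,\pm}$ involved.
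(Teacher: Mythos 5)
Your proof is correct and follows essentially the same route as the paper: both use the dilation identity $\mathcal{F}^{-1}(\chi_{j,\pm}(s))(x)=s^n\mathcal{F}^{-1}(\chi_{j,\pm}(1))(sx)$, a change of variables to make the unweighted $L^1$ norm $s$-independent, and the bound $1+|y|^2/s^2\le 1+|y|^2/s_0^2$ (valid since $\sigma>0$ and $s\ge s_0$) together with the Schwartz decay of $\mathcal{F}^{-1}(\chi_{j,\pm}(1))$ for the weighted norm. No issues.
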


\begin{proof}
By  \eqref{eq_5_partician_scalling}, we get 
\[
\mathcal{F}^{-1}(\chi_{j,\pm}(s))(x)=s^n \mathcal{F}^{-1}(\chi_{j,\pm}(1))(sx),
\]
and therefore, 
\begin{equation}
\label{lem_eq_5_9_chi_1}
\|\mathcal{F}^{-1}(\chi_{j,\pm}(s))\|_{L^1(\R^n)}=\|\mathcal{F}^{-1}(\chi_{j,\pm}(1))\|_{L^1(\R^n)}.
\end{equation}
For $s\ge s_0>0$, as $\sigma>0$,  we also have 
\begin{equation}
\label{lem_eq_5_9_chi_2}
\| (1+|\cdot|^2)^{\sigma/2}\mathcal{F}^{-1}(\chi_{j,\pm} (s)) \|_{L^1(\R^n)}\le \int_{\R^n} \bigg(1+\frac{|y|^2}{s_0^2}\bigg)^{\sigma/2}|\mathcal{F}^{-1}(\chi_{j,\pm}(1))(y)|dy.
\end{equation}
As $\mathcal{F}^{-1}(\chi_{j,\pm}(1))\in \mathcal{S}(\R^n)$, the expressions in \eqref{lem_eq_5_9_chi_1} and \eqref{lem_eq_5_9_chi_2} are finite. 
The proof is complete. 
\end{proof}

We are now ready to prove the following result. 
\begin{prop}
\label{prop_G_s_m_L_2}
Let $m=1,2,\dots$, and $-m<\sigma < 1-m$. Then for $|\zeta|\ge s_0>0$,  the  operator $G_\zeta^{(m)}$ can be extended to a bounded operator $L^2_{\sigma+2m-1}(\R^n)\to L^2_\sigma(\R^n)$ such that
\begin{equation}
\label{eq_5_10}
\| G_\zeta^{(m)}f\|_{L^2_\sigma}\le \frac{C}{|\zeta|^m} \|f\|_{L^2_{\sigma+2m-1}}.
\end{equation}
\end{prop}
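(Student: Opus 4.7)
The plan is to bound $G_\zeta^{(m)}$ by analyzing the pieces in the decomposition \eqref{eq_5_fundam_sol_f} separately. The ``regular'' piece, with Fourier symbol $\chi_1(s)(\xi)/p_\zeta(\xi)^m$, is easy: on $\supp \chi_1(s)$ one has $d(\xi,\Sigma(s)) \gtrsim s$, so Lemma \ref{lem_behavior_p_zeta} gives $|p_\zeta(\xi)| \gtrsim s^2$ and the symbol is bounded by $Cs^{-2m}$. By Plancherel, the corresponding convolution operator is $L^2 \to L^2$ with norm $\lesssim |\zeta|^{-2m}$. The assumption $-m < \sigma < 1-m$ forces $\sigma+2m-1 > 0$ and $\sigma < 0$, so the trivial embeddings $L^2_{\sigma+2m-1} \hookrightarrow L^2 \hookrightarrow L^2_\sigma$ upgrade this to the claimed mapping, with the excess $|\zeta|^{-2m}$ absorbed into $|\zeta|^{-m}$ using $|\zeta| \ge s_0$.

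For each singular piece $g_{j,\pm} := \mathcal{F}^{-1}(\hat w_{j,\pm})$ the strategy is to pull everything back to $\eta$-coordinates via $\Phi = \Phi^{(j,\pm)}(s)$, where the symbol becomes the model $s^{-m}(\eta_j+i\eta_1)^{-m}$, and then to apply Lemma \ref{lem_SU_modified_second}. Setting $G(\eta) := \hat f(\Phi^{-1}(\eta))$ and $\tilde\chi_{j,\pm}(s) := \chi_{j,\pm}(s) \circ \Phi^{-1}$, multiplicativity of the pullback gives
\[
\hat w_{j,\pm}(\xi)\, \hat f(\xi) = \Phi^*\bigl[\tilde\chi_{j,\pm}(s)(\eta)\, E_\zeta^{(m,j)}(\eta)\, G(\eta)\bigr].
\]
Since the bracketed distribution is compactly supported in $\eta$, Lemmas \ref{lem_diffeom_invarience} and \ref{lem_diffeom_invarience_s} yield
\[
\|g_{j,\pm} * f\|_{L^2_\sigma} \le C\, \|\mathcal{F}^{-1}\bigl(\tilde\chi_{j,\pm}(s)\, E_\zeta^{(m,j)}\, G\bigr)\|_{L^2_\sigma}
\]
with constant uniform in $s$.

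Next I compute $\mathcal{F}^{-1}(E_\zeta^{(m,j)})$ explicitly. Combining \eqref{eq_e_zeta_m_j} with the identity $\mathcal{F}^{-1}(\partial_{\eta_j}^{m-1} u) = (ix_j)^{m-1}\mathcal{F}^{-1}(u)$ and the two-dimensional computation $\mathcal{F}^{-1}\bigl(1/(\eta_j+i\eta_1)\bigr) \propto 1/(x_j+ix_1)$, one obtains
\[
\mathcal{F}^{-1}(E_\zeta^{(m,j)}) \;=\; \frac{c_m}{s^m}\, \frac{x_j^{m-1}}{x_j+ix_1} \otimes \delta(x''),
\]
with a constant $c_m$ independent of $s$. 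Writing the Fourier product as $E_\zeta^{(m,j)} \cdot (\tilde\chi_{j,\pm}(s) G)$ and passing to $x$-space turns the inverse Fourier transform into a convolution, and Lemma \ref{lem_SU_modified_second} (used with $x_j$ in place of $x_2$, which is valid by the rotational symmetry built into its statement) bounds
\[
\|\mathcal{F}^{-1}(\tilde\chi_{j,\pm}(s)\, E_\zeta^{(m,j)}\, G)\|_{L^2_\sigma} \;\le\; \frac{C}{s^m}\, \|\mathcal{F}^{-1}(\tilde\chi_{j,\pm}(s)\, G)\|_{L^2_{\sigma+2m-1}}.
\]
Finally, since $\tilde\chi_{j,\pm}(s)\, G = (\Phi^{-1})^*[\chi_{j,\pm}(s) \hat f]$ is compactly supported in $\xi$, one more application of Lemmas \ref{lem_diffeom_invarience} and \ref{lem_diffeom_invarience_s} (now with $\Phi^{-1}$ playing the role of $\Phi$) reduces this to $\|\mathcal{F}^{-1}(\chi_{j,\pm}(s)) * f\|_{L^2_{\sigma+2m-1}}$, and, since $\sigma+2m-1>0$, Lemmas \ref{lem_eq_5_9} and \ref{lem_eq_5_9_chi} complete the bound by $C\|f\|_{L^2_{\sigma+2m-1}}$ with constant uniform in $s$. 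Summing over $j=2,\dots,n$ and the two signs yields \eqref{eq_5_10}.

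The main obstacle is the uniformity of all constants in the large parameter $s = |\zeta|/\sqrt{2}$: both $\Phi^{(j,\pm)}(s)$ and $\chi_{j,\pm}(s)$ depend genuinely on $s$, so each pullback and each convolution with a cutoff carries an a priori $s$-dependent constant. The whole apparatus --- the scaling relations \eqref{eq_scalling_relations_sets}, the rescaled partition \eqref{eq_5_partician_scalling}, and Lemmas \ref{lem_diffeom_invarience_s} and \ref{lem_eq_5_9_chi} --- is engineered precisely to make these constants $s$-independent, and once this bookkeeping is in place the estimate is assembled by chaining the inequalities above.
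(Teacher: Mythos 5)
Your proposal is correct and follows essentially the same route as the paper's proof: the regular piece is handled by Plancherel and the embeddings forced by $-m<\sigma<1-m$, and each singular piece is treated by pullback multiplicativity, the explicit formula for $\mathcal{F}^{-1}(E_\zeta^{(m,j)})$, Lemma \ref{lem_SU_modified_second}, and the $s$-uniform diffeomorphism and cutoff lemmas, exactly as in the paper (which fixes $j=2$ and notes the other cases are identical rather than invoking rotational symmetry).
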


\begin{proof}
It suffices to prove \eqref{eq_5_10} when $f\in \mathcal{S}(\R^n)$. When doing so we shall make use of the fact that 
\[
G_\zeta^{(m)} f=\mathcal{F}^{-1}(E_\zeta^{(m)})*f,
\]
where $E_\zeta^{(m)}$ is given by
\eqref{eq_5_fundam_sol_f}. 

First notice that 
\[
\supp(\chi_1(s))\subset\{\xi\in \R^n: \dist(\xi,\Sigma_\zeta)>\frac{s}{2n}\},
\]
and therefore, an application of Lemma \ref{lem_behavior_p_zeta} shows that 
\[
|p_\zeta(\xi)|\gtrsim s^2,\quad \xi\in \supp(\chi_1(s)). 
\]
By Parseval's formula and using that $\sigma<0$ and $\sigma+2m-1>0$, we have
\begin{equation}
\label{eq_5_11}
\begin{aligned}
\bigg\| \mathcal{F}^{-1}\bigg(\frac{\chi_1(s)}{(p_\zeta(\xi))^m}\bigg)*f \bigg\| _{L^2_\sigma}& \le \bigg\| \mathcal{F}^{-1}\bigg(\frac{\chi_1(s)}{(p_\zeta(\xi))^m}\bigg)*f \bigg\| _{L^2}=(2\pi)^{-n/2} \bigg\| \frac{\chi_1(s)}{(p_\zeta(\xi))^m}\hat f \bigg\| _{L^2}\\
&\le\frac{C}{|\zeta|^{2m}}\|f\|_{L^2} \le\frac{C}{|\zeta|^{2m}}\|f\|_{L^2_{\sigma+2m-1}}.
\end{aligned}
\end{equation}

Let now $1<j\le  n$, and assume to fix the ideas that $j=2$, all other cases being identical.  Writing $x=(x_1,x_2,x'')$, by \eqref{eq_e_zeta_m_j}, we have
\begin{equation}
\label{eq_5_8}
\begin{aligned}
\mathcal{F}^{-1}(  E_\zeta^{(m,2)})&=\frac{(-1)^{m-1}}{s^m(m-1)!}\mathcal{F}^{-1}\bigg(\p_{\eta_2}^{(m-1)} \bigg( \frac{1}{\eta_2+i\eta_1}\bigg)\bigg)\\
&=\frac{i^{m-1}}{s^m(m-1)!} x_2^{m-1} \mathcal{F}^{-1}\bigg( \frac{1}{\eta_2+i\eta_1}\bigg)
\\
&=\frac{i^{m} x_2^{m-1}}{2\pi s^m(m-1)! (x_2+i x_1)}\otimes \delta(x''),
\end{aligned}
\end{equation}
see \cite[Exercise 7.1.40]{Hormander_book_1}.

Using that 
\[
\Phi^*(fu)=(\Phi^* f) (\Phi^*u), \quad f\in C^\infty(\R^n),\quad u\in \mathcal{D}'(\R^n),
\]
see \cite[p. 135]{Hormander_book_1},
for $s\ge s_0>0$, with the help of Lemma \ref{lem_diffeom_invarience} combined with Lemma \ref{lem_diffeom_invarience_s},   we get  
\begin{equation}
\label{eq_5_12}
\begin{aligned}
\|\mathcal{F}^{-1}& ((\Phi^{(2,\pm)}(s))^*( (\chi_{2,\pm}(s) \circ  (\Phi^{(2,\pm)}(s))^{-1})  E_\zeta^{(m,2)})) *f\|_{L^2_\sigma}\\
&= 
\| \mathcal{F}^{-1}\big(  (\Phi^{(2,\pm)}(s))^* \big( (\hat f\chi_{2,\pm}(s))\circ (\Phi^{(2,\pm)}(s))^{-1}   E_\zeta^{(m,2)}  \big) \big)\|_{L^2_\sigma}\\
& \le C\|  \mathcal{F}^{-1}\big( (\hat f\chi_{2,\pm}(s))\circ (\Phi^{(2,\pm)}(s))^{-1}   E_\zeta^{(m,2)}  \big)  \|_{L^2_\sigma}\\
&=  C\| \mathcal{F}^{-1}(  E_\zeta^{(m,2)})* \mathcal{F}^{-1}\big( (\hat f\chi_{2,\pm}(s))\circ (\Phi^{(2,\pm)}(s))^{-1} \big)  \|_{L^2_\sigma}\\
& \le \frac{C}{s^m}\bigg\| \bigg(\frac{ x_2^{m-1}}{x_2+i x_1} \otimes \delta(x'')\bigg) * \mathcal{F}^{-1}\big( (\hat f\chi_{2,\pm}(s))\circ (\Phi^{(2,\pm)}(s))^{-1} \big)   \bigg\|_{L^2_\sigma}.
\end{aligned}
\end{equation}
In the last line we have used \eqref{eq_5_8}.  

An application of Lemma \ref{lem_SU_modified_second} shows that the last expression can be estimated as follows, 
\begin{equation}
\label{eq_5_12_second_part}
\begin{aligned}
& \le \frac{C}{s^m} \|  \mathcal{F}^{-1}\big( (\hat f\chi_{2,\pm}(s))\circ (\Phi^{(2,\pm)}(s))^{-1} \big)  \|_{L^2_{\sigma+2m-1}}
\le \frac{C}{s^m} \| \mathcal{F}^{-1} (\hat f\chi_{2,\pm}(s))  \|_{L^2_{\sigma+2m-1}}\\
&= \frac{C}{s^m} \| f*\mathcal{F}^{-1} (\chi_{2,\pm}(s))  \|_{L^2_{\sigma+2m-1}} \le \frac{C}{s^m} \| f \|_{L^2_{\sigma+2m-1}}.
\end{aligned}
\end{equation}
Here we have used Lemma \ref{lem_diffeom_invarience}, combined with Lemma \ref{lem_diffeom_invarience_s}, and  Lemma 
\ref{lem_eq_5_9}, combined with Lemma \ref{lem_eq_5_9_chi}.

The estimate \eqref{eq_5_10} follows from \eqref{eq_5_11}, \eqref{eq_5_12} and \eqref{eq_5_12_second_part}. The proof is complete. 
\end{proof}

We now proceed to discuss estimates for the convolution operator $G_\zeta^{(m)}$ in suitable $L^p$ spaces. 
\begin{prop}
\label{prop_uniform_l_p_poly}
Let $\zeta\in \C^n\setminus\{0\}$, $\zeta\cdot\zeta=0$. Then 
$G_\zeta^{(m)}$ can be extended to a bounded operator $L^{\frac{2n}{n+2m}}(\R^n)\to L^{\frac{2n}{n-2m}}(\R^n)$  such that 
\begin{equation}
\label{eq_5_13_0}
\| G_\zeta^{(m)} f\|_{L^{\frac{2n}{n-2m}}}\le C\| f\|_{L^{\frac{2n}{n+2m}}}, \quad f\in L^{\frac{2n}{n+2m}}(\R^n),
\end{equation}
uniformly in $\zeta$. 
\end{prop}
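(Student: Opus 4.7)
The plan is to derive the estimate \eqref{eq_5_13_0} from the $L^p$ Carleman estimate \eqref{eq_KRS} with linear weights, via a standard a priori bound + duality + identification scheme. Writing $P_\zeta = (-\Delta - 2\zeta\cdot\nabla)^m$, the three steps are: (a) transfer \eqref{eq_KRS} into a uniform a priori estimate for $P_\zeta$; (b) use Hahn--Banach and the duality $(L^p)^* = L^q$ to produce an $L^q$-solution of $P_\zeta w = f$ with $\|w\|_{L^q} \le C\|f\|_{L^p}$; (c) identify this solution with $G_\zeta^{(m)}f$ and extend by density.

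For step (a), I would apply \eqref{eq_KRS} to the compactly supported complex-valued test function $u = e^{-x\cdot\zeta}v$ with $v \in C_0^\infty(\R^n)$ and $k = \hbox{Re}\,\zeta$. The identity $(-\Delta)^m(e^{-x\cdot\zeta}v) = e^{-x\cdot\zeta}(-\Delta + 2\zeta\cdot\nabla)^m v$, valid because $\zeta\cdot\zeta = 0$, combined with the observation $|e^{k\cdot x - x\cdot\zeta}| = |e^{-i x\cdot \hbox{Im}\,\zeta}| = 1$, shows that the real exponential weights cancel exactly. Consequently \eqref{eq_KRS} reduces to $\|v\|_{L^q} \le C\|(-\Delta + 2\zeta\cdot\nabla)^m v\|_{L^p}$, and replacing $\zeta$ by $-\zeta$ (which still satisfies the quadratic condition) yields
\[
\|v\|_{L^q(\R^n)} \le C\|P_\zeta v\|_{L^p(\R^n)}, \quad v \in C_0^\infty(\R^n),
\]
with $C$ independent of $\zeta$.

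For step (b), given $f \in L^p(\R^n)$, I would define an antilinear functional on the subspace $P_{-\zeta}(C_0^\infty) \subset L^p$ by $\ell(P_{-\zeta}\varphi) = \int f\varphi\,dx$. Injectivity of $P_{-\zeta}$ on $C_0^\infty$, guaranteed by step (a) applied to $-\zeta$, makes $\ell$ well defined; the chain $|\ell(P_{-\zeta}\varphi)| \le \|f\|_{L^p}\|\varphi\|_{L^q} \le C\|f\|_{L^p}\|P_{-\zeta}\varphi\|_{L^p}$ shows it is bounded with norm $\le C\|f\|_{L^p}$. By Hahn--Banach and Riesz representation there exists $w \in L^q$ with $\|w\|_{L^q} \le C\|f\|_{L^p}$ such that $\int w\, P_{-\zeta}\varphi = \int f\varphi$ for every $\varphi \in C_0^\infty$, i.e. $P_\zeta w = f$ in $\mathcal{D}'(\R^n)$.

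For step (c), I would take $f \in \mathcal{S}(\R^n) \cap L^p$ and note that both $w$ and $G_\zeta^{(m)}f \in \mathcal{S}'(\R^n)$ solve $P_\zeta u = f$; their difference $r$ satisfies $(p_\zeta(\xi))^m \hat r = 0$, so $\hat r$ is supported in the codimension-two characteristic set $\Sigma_\zeta$. A Fourier-support argument for tempered distributions concentrated on this thin set forces $r = 0$, yielding the identification $w = G_\zeta^{(m)}f$ and simultaneously the uniqueness statement in part (ii) of Theorem \ref{thm_main_2}. Thus $\|G_\zeta^{(m)}f\|_{L^q} \le C\|f\|_{L^p}$ on the dense subspace $\mathcal{S}(\R^n) \cap L^p$, and \eqref{eq_5_13_0} follows by density. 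The main obstacle is step (c): steps (a)--(b) are essentially formal, but translating the abstract Hahn--Banach solution into the explicit convolution $G_\zeta^{(m)}f$ requires either a careful structural analysis of distributions supported on $\Sigma_\zeta$ or, alternatively, a mollification-and-cutoff argument extending the Carleman bound directly to general $L^q$-solutions, which would bypass the identification but is technically heavier.
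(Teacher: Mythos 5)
Your steps (a) and (b) are sound and step (a) coincides with how the paper begins (the uniform a priori bound $\|v\|_{L^{q}}\le C\|(-\Delta-2\zeta\cdot\nabla)^m v\|_{L^{p}}$ deduced from \eqref{eq_KRS} by conjugating with $e^{x\cdot\zeta}$ and noting that the imaginary part of the exponent is harmless). The genuine gap is in step (c), and you have correctly sensed where it is. The assertion that ``a Fourier-support argument for tempered distributions concentrated on $\Sigma_\zeta$ forces $r=0$'' is false: for any $\xi_0\in\Sigma_\zeta$ the function $e^{i\xi_0\cdot x}$ is a nonzero tempered solution of $(-\Delta-2\zeta\cdot\nabla)^m r=0$ with $\hat r=\delta_{\xi_0}$ supported on $\Sigma_\zeta$; more generally the structure theorem only gives $r=\sum_j \hat u_j(x')(ix_1)^j$ with $\hat u_j$ smooth of polynomial growth. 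To kill these solutions one must know that $r$ lies in a space such as $L^{q}(\R^n)$ with $q<\infty$ (this is exactly how the paper proves the uniqueness clause of Theorem \ref{thm_main_2}). But $r=w-G^{(m)}_\zeta f$, and for a general $f\in\mathcal{S}(\R^n)$ you only know $G^{(m)}_\zeta f\in\mathcal{S}'(\R^n)$ (or, from Proposition \ref{prop_G_s_m_L_2}, that it lies in a weighted $L^2$ space with \emph{negative} weight, which allows growth). Knowing $G^{(m)}_\zeta f\in L^{q}$ is essentially the conclusion you are trying to prove, so the identification as stated is circular.

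The paper closes this loop by a different device that makes both your Hahn--Banach step and the identification unnecessary: it introduces the subspace $X_\zeta=\{f\in\mathcal{S}:\hat f\in C_0^\infty(\R^n\setminus\Sigma_\zeta)\}$, verifies by direct computation that $G^{(m)}_\zeta$ maps $X_\zeta$ into itself (so that $u=G^{(m)}_\zeta f$ is a legitimate Schwartz test function for the a priori estimate and $(-\Delta-2\zeta\cdot\nabla)^mG^{(m)}_\zeta f=f$ trivially), obtains \eqref{eq_5_13_0} on $X_\zeta$, and then proves that $X_\zeta$ is dense in $L^{p}(\R^n)$ (Lemma \ref{lem_density}, which is itself the $L^{q}$-uniqueness argument you allude to). If you want to salvage your duality route, the minimal repair is the same: restrict the identification to $f\in X_\zeta$, where $G^{(m)}_\zeta f\in\mathcal{S}\subset L^{q}$ and the difference of two $L^{q}$-solutions does vanish, and then invoke the density of $X_\zeta$; at that point the Hahn--Banach construction becomes redundant.
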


\begin{proof}
First 
as a consequence of \eqref{eq_KRS}, we have the following estimate
\begin{equation}
\label{eq_5_13}
\|u\|_{L^{\frac{2n}{n-2m}}}\le C\|(-\Delta-2\zeta\cdot\nabla)^m u\|_{L^{\frac{2n}{n+2m}}}
\end{equation}
uniformly in $\zeta\in \C^n$, $\zeta\cdot\zeta=0$,  and $u\in \mathcal{S}(\R^n)$. 

Next we would like to substitute $u=G_\zeta^{(m)}f$, $f\in \mathcal{S}(\R^n)$, in \eqref{eq_5_13}. However, the operator $G_\zeta^{(m)}$ does not preserve the Schwartz space. To overcome this difficulty, let us consider the space of $\mathcal{S}(\R^n)$, given by
\[
X_\zeta=\{f\in \mathcal{S}(\R^n): \hat f\in C^\infty_0(\R^n\setminus \Sigma_\zeta)\}.
\]
Let us show that 
\begin{equation}
\label{eq_5_14}
G_\zeta^{(m)}: X_\zeta\to X_\zeta.
\end{equation}
Indeed, let $f\in X_\zeta$. Using \eqref{eq_5_fundam_sol_f}, we get 
\begin{align*}
\widehat{G_\zeta^{(m)}f}=\frac{\hat f\chi_1(s)}{(p_\zeta)^m}&+\sum_{j=2}^n  (\Phi^{(j,+)}(s))^* \big( (\hat f \chi_{j,+}(s)\circ (\Phi^{(j,+)}(s))^{-1}) E_\zeta^{(m,j)}\big)\\
 &+  \sum_{j=2}^n (\Phi^{(j,-)}(s))^* \big( (\hat f \chi_{j,-}(s)\circ (\Phi^{(j,-)}(s))^{-1}) E_\zeta^{(m,j)}\big).
\end{align*}
Notice that $\hat f\chi_{j,\pm}(s)\circ (\Phi^{(j,\pm)}(s))^{-1}\in C_0^\infty(\R^n\setminus\{\eta_j=0,\eta_1=0\})$. Therefore, for $\varphi\in \mathcal{S}(\R^n)$, we have 
\begin{align*}
\langle  (\hat f \chi_{j,\pm}(s)&\circ (\Phi^{(j,\pm)}(s))^{-1}) E_\zeta^{(m,j)}, \varphi\rangle\\
&=\frac{1}{s^m(m-1)!}\int_{\R^n} \frac{1}{\eta_j+i\eta_1}
\p_{\eta_j}^{(m-1)} ( \hat f \chi_{j,\pm}(s)\circ (\Phi^{(j,\pm)}(s))^{-1}\varphi)d\eta\\
&=\int_{\R^n}  \frac{1}{s^m(\eta_j+i\eta_1)^m} \hat f \chi_{j,\pm}(s)\circ (\Phi^{(j,\pm)}(s))^{-1}\varphi d\eta.
\end{align*}
Hence, 
\[
\widehat{G_\zeta^{(m)}f}\in C^\infty_0(\R^n\setminus \Sigma_\zeta),
\]
which shows \eqref{eq_5_14}. 

Substituting $u=G_\zeta^{(m)}f$, $f\in X_\zeta$, into \eqref{eq_5_13}, and using that
\[
(-\Delta-2\zeta\cdot\nabla)^m G_\zeta^{(m)}f=f,\quad f\in X_\zeta,
\]
we get 
\begin{equation}
\label{eq_5_15}
\|G_\zeta^{(m)}f\|_{L^{\frac{2n}{n-2m}}}\le C\|f\|_{L^{\frac{2n}{n+2m}}},\quad f\in X_\zeta,
\end{equation}
where the constant $C$ is independent of $\zeta$.  According to Lemma \ref{lem_density} below the space $X_\zeta$ is dense in $L^{\frac{2n}{n+2m}}(\R^n)$, and hence the estimate \eqref{eq_5_15} can be extended to all $f\in L^{\frac{2n}{n+2m}}(\R^n)$. 
This completes the proof. 
\end{proof}

\begin{lem}
\label{lem_density}
For every $\zeta\in \C^n\setminus\{0\}$ such that $\zeta\cdot\zeta=0$,  the space 
\[
X_\zeta=\{f\in \mathcal{S}(\R^n): \hat f\in C^\infty_0(\R^n\setminus \Sigma_\zeta)\},
\]
where 
\[
\Sigma_\zeta=\{\xi\in \R^n: p_\zeta(\xi)=0\},
\]
is dense in $L^p(\R^n)$, $1< p<\infty$. 
\end{lem}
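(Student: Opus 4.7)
The plan is to argue by duality and reduce to a statement about tempered distributions supported on $\Sigma_\zeta$. Let $1<p<\infty$ and $p'=p/(p-1)\in (1,\infty)$. To show that $X_\zeta$ is dense in $L^p(\R^n)$, it suffices to verify that any $h\in L^{p'}(\R^n)$ satisfying
\[
\int_{\R^n} f(x)\,h(x)\,dx=0\quad\text{for every }f\in X_\zeta
\]
must vanish. Taking $f=\mathcal{F}^{-1}\phi$ with $\phi\in C^\infty_0(\R^n\setminus \Sigma_\zeta)$, Parseval's formula, interpreted as a distributional pairing, shows that the tempered distribution $T:=\mathcal{F}^{-1}(h)\in\mathcal{S}'(\R^n)$ must satisfy $\langle T,\phi\rangle=0$ for every $\phi\in C^\infty_0(\R^n\setminus \Sigma_\zeta)$. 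Hence $\supp T\subset\Sigma_\zeta$.

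Next, I would exploit that $\Sigma_\zeta$ is compact and contained in the hyperplane $\{\xi_1=0\}$. Thus $T\in \mathcal{E}'(\R^n)$, is of finite order $N$, and by Schwartz's classical structure theorem for distributions supported in a hyperplane, admits a representation
\[
T=\sum_{k=0}^N \delta^{(k)}(\xi_1)\otimes S_k,
\]
with $S_k\in \mathcal{E}'(\R^{n-1})$ compactly supported in the $(n-2)$-sphere $\Sigma_\zeta'=\{\xi'\in\R^{n-1}:|\xi'-se_2|=s\}$.

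Applying the Fourier transform and writing $x=(x_1,x'')\in \R\times \R^{n-1}$, I obtain
\[
h(x)=\mathcal{F}(T)(x)=\sum_{k=0}^N (ix_1)^k\,\widehat{S_k}(x''),
\]
where each $\widehat{S_k}$ is smooth (indeed entire, by the Paley--Wiener theorem). For almost every $x''$, the function $x_1\mapsto h(x_1,x'')$ is therefore a polynomial of degree $\le N$; by Fubini and the hypothesis $h\in L^{p'}(\R^n)$ with $p'<\infty$, this polynomial must belong to $L^{p'}(\R_{x_1})$ for a.e.\ $x''$, forcing it to be identically zero. Consequently $\widehat{S_k}(x'')=0$ for a.e.\ $x''$ and every $k$; by continuity $\widehat{S_k}\equiv 0$, so $S_k=0$, $T=0$, and $h=0$, as desired.

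The only non-routine step is the invocation of Schwartz's structure theorem and the verification that the resulting decomposition interacts cleanly with the Fourier transform; this is where the restriction $p<\infty$ (equivalently $p'<\infty$) is essential, since a nonzero polynomial fails to lie in $L^{p'}(\R)$ precisely when $p'<\infty$. Everything else is standard manipulation with tempered distributions.
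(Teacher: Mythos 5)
Your proposal is correct and follows essentially the same route as the paper: duality via Hahn--Banach, the observation that the annihilator's inverse Fourier transform is supported in $\Sigma_\zeta\subset\{\xi_1=0\}$, the structure theorem for compactly supported distributions carried by a hyperplane, and the Fubini argument that a nonzero polynomial in $x_1$ cannot lie in $L^{p'}(\R)$ for $p'<\infty$. No gaps.
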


\begin{proof}
Without loss of generality, we may assume that $\zeta=s(e_1-ie_2)$, and therefore, 
\[
\Sigma_\zeta=\{\xi\in \R^n: \xi_1=0,|\xi-s e_2|=s\}.
\]
Let $g\in L^{q}(\R^n)$, $\frac{1}{p}+\frac{1}{q}=1$, be such that 
\begin{equation}
\label{eq_5_16}
\langle g, f\rangle=0
\end{equation}
 for all $f\in X_\zeta$. According to the Hahn--Banach theorem, it suffices to  show that $g=0$.  It follows from \eqref{eq_5_16} that 
 \[
\langle \mathcal{F}^{-1} g,  \hat f\rangle=0,
 \]
where $\hat f$ is an arbitrary function in $C^\infty_0(\R^n\setminus \Sigma_\zeta)$. Hence, 
\[
\supp (\mathcal{F}^{-1} g)\subset \Sigma_\zeta.
\]
As $\Sigma_\zeta$ is a compact subset contained in the hyperplane $\{\xi_1=0\}$, we have 
\[
\mathcal{F}^{-1} g=\sum_{j=0}^k u_j\otimes \p^{j} \delta(\xi_1), \quad u_j\in\mathcal{E}'(\R^{n-1}_{\xi'}),
\]
where $k$ is the order of the distribution $\mathcal{F}^{-1} g\in \mathcal{E}'(\R^n)$, see \cite[Example 5.1.2, p. 128]{Hormander_book_1}.  Thus, 
\begin{equation}
\label{eq_q_prop_density}
g=\sum_{j=0}^k \hat u_j\otimes (ix_1)^{j},\quad \hat u_j\in \mathcal{S}'\cap C^\infty(\R^{n-1}_{x'}).
\end{equation}
Since $g\in L^q(\R^n)$, $q<\infty$, by Fubini's theorem we have   $x_1\mapsto g(x_1,x')$ in $L^q(\R)$ for almost all $x'$.  As $q<\infty$, the latter is only possible if all $\hat u_j$ in \eqref{eq_q_prop_density} vanish identically. The proof is complete. 
\end{proof}

\begin{rem}
 It might be interesting to mention that there is another way to prove the density of the space $X_\zeta$ in $L^p(\R^n)$ with $\frac{2n}{n+2}\le p <\infty$, which is based  on the fact that $\Sigma_\zeta$ is a smooth manifold of codimension two, and the fact that if $g\in L^q(\R^n)$ and $\supp \hat g$ is carried by a manifold of codimension two then $g=0$ provided $1\le q\le \frac{2n}{n-2}$.  The latter fact is established in  \cite{Agranovsky_2004} by refining the proof of \cite[Theorem 7.1.27]{Hormander_book_1}.
\end{rem}

Let us finally discuss the uniqueness statement in Theorem \ref{thm_main_2}.  To that end  it remains to show that the homogeneous equation 
\begin{equation}
\label{proof_cor_1}
((-\Delta)^m-2\zeta\cdot \nabla)^m w=0
\end{equation}
has only a trivial solution in $L^{\frac{2n}{n-2m}}(\R^n)$.  Taking the Fourier transform in \eqref{proof_cor_1}, we see that $\supp(\hat w)\subset \Sigma_\zeta\subset\{\xi\in \R^n:\xi_1=0\}$. As in the proof of Lemma \ref{lem_density}, we conclude that $w=0$.  The proof of Theorem \ref{thm_main_2}  is now complete.

\section{Construction of complex geometric optics solutions}

\label{sec_cgo}

Let $q\in L^{\frac{n}{2m}}(\Omega)$ and $n>2m$. Viewing $q$ as an element of $ (L^{\frac{n}{2m}}\cap \mathcal{E}')(\R^n)$, with $\supp q\subset\overline{\Omega}$, 
consider the equation,
\begin{equation}
\label{eq_4_1}
((-\Delta)^m+q)u=0\quad \text{in}\quad  \R^n.
\end{equation}
 
The next result provides us with the existence of complex geometric optics solutions to the equation \eqref{eq_4_1}. 
\begin{prop}
\label{prop_cgo}
For each $\zeta\in \C^n$  such that $\zeta\cdot\zeta=0$ and $|\zeta|$ is sufficiently large, there exists a solution of the equation \eqref{eq_4_1} of the form
\begin{equation}
\label{eq_4_2}
u=e^{x\cdot\zeta} (1+r),
\end{equation}
where the remainder $r$ satisfies
\begin{equation}
\label{eq_4_2_remainder}
\|r\|_{L^{\frac{2n}{n-2m}}(\R^n)}=\mathcal{O}(1),\quad |\zeta|\to \infty, 
\end{equation}
and for any compact set $K\subset \R^n$, 
\[
\|r\|_{L^2(K)}\to 0,\quad |\zeta|\to \infty. 
\]
\end{prop}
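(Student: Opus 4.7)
The plan is to substitute the ansatz $u=e^{x\cdot\zeta}(1+r)$ into \eqref{eq_4_1} and, using the conjugation identity $e^{-x\cdot\zeta}(-\Delta)^m e^{x\cdot\zeta}=(-\Delta-2\zeta\cdot\nabla)^m$ together with the fact that constants are annihilated by the conjugated operator, to reduce \eqref{eq_4_1} to
\[
(-\Delta-2\zeta\cdot\nabla)^m r=-q(1+r)\quad\text{in }\R^n.
\]
Applying $G_\zeta^{(m)}$ from Theorem~\ref{thm_main_2} and invoking the uniqueness statement in (ii), this becomes the integral equation
\[
(I+G_\zeta^{(m)}M_q)\,r=-G_\zeta^{(m)}q\quad\text{in }Y:=L^{\frac{2n}{n-2m}}(\R^n),
\]
where $M_q$ denotes multiplication by $q$. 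Since $q\in L^{n/2m}\cap\mathcal{E}'$ is supported in $\overline{\Omega}$ and $n>2m$, H\"older's inequality makes $M_q:Y\to L^{\frac{2n}{n+2m}}(\R^n)$ bounded with norm $\lesssim\|q\|_{L^{n/2m}}$; Theorem~\ref{thm_main_2}(ii) then places both $G_\zeta^{(m)}q\in Y$ and the operator $T_\zeta:=G_\zeta^{(m)}M_q:Y\to Y$ uniformly bounded in $\zeta$.

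The core of the argument is to invert $I+T_\zeta$ on $Y$ for $|\zeta|$ large. For this I would fix a small $\epsilon>0$ (chosen later) and, by density of $C_0^\infty(\Omega)$ in $L^{n/2m}(\Omega)$, decompose $q=q_\flat+q_\sharp$ with $q_\flat\in C_0^\infty(\Omega)$ and $\|q_\sharp\|_{L^{n/2m}(\Omega)}<\epsilon$. H\"older and Theorem~\ref{thm_main_2}(ii) give $\|G_\zeta^{(m)}M_{q_\sharp}\|_{Y\to Y}\leq C\epsilon$ uniformly in $\zeta$, so for $\epsilon$ small, $I+G_\zeta^{(m)}M_{q_\sharp}$ is boundedly invertible. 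The factorization
\[
I+T_\zeta=(I+G_\zeta^{(m)}M_{q_\sharp})\bigl(I+(I+G_\zeta^{(m)}M_{q_\sharp})^{-1}G_\zeta^{(m)}M_{q_\flat}\bigr)
\]
reduces invertibility of $I+T_\zeta$ to that of $I+A_\zeta$, where $A_\zeta:=(I+G_\zeta^{(m)}M_{q_\sharp})^{-1}G_\zeta^{(m)}M_{q_\flat}$.

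The step I expect to be the main technical obstacle is the invertibility of $I+A_\zeta$ on $Y$. The weighted $L^2$ estimate of Theorem~\ref{thm_main_2}(i), combined with the compact support and $L^\infty$ boundedness of $q_\flat$, yields for $-m<\sigma<1-m$
\[
\|G_\zeta^{(m)}M_{q_\flat}r\|_{L^2_\sigma(\R^n)}\lesssim\frac{\|q_\flat\|_{L^\infty}}{|\zeta|^m}\|r\|_Y,
\]
which is smallness in a strictly weaker norm than $Y$. I would handle this via the Fredholm alternative on $Y$: because $q_\flat$ has compact support and $G_\zeta^{(m)}$ effectively inverts a differential operator of order $2m$ on compactly supported inputs, $A_\zeta$ is compact on $Y$ (Rellich compactness on bounded sets combined with control of the tails via the uniform bound in Theorem~\ref{thm_main_2}(ii)), so $I+A_\zeta$ has Fredholm index zero; its injectivity for $|\zeta|$ large follows by applying the weighted $L^2$ smallness to any $r$ with $(I+A_\zeta)r=0$, forcing the $|\zeta|^{-m}$ contraction to make $r$ vanish.

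Granted this, $r=-(I+T_\zeta)^{-1}G_\zeta^{(m)}q\in Y$ satisfies $\|r\|_Y=O(1)$, which is \eqref{eq_4_2_remainder}. For the $L^2(K)$ decay, given $\delta>0$ I would decompose $q=q_\flat+q_\sharp$ as above with $\epsilon<\delta$ and use the integral equation in the form
\[
r=-G_\zeta^{(m)}(q_\flat(1+r))-G_\zeta^{(m)}(q_\sharp(1+r)).
\]
Since $q_\flat(1+r)$ is compactly supported with $L^2(\R^n)$-norm $\lesssim 1+\|r\|_Y$, Theorem~\ref{thm_main_2}(i) bounds the first term in $L^2_\sigma$, hence in $L^2(K)$, by $O(|\zeta|^{-m})\to 0$. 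Theorem~\ref{thm_main_2}(ii) and H\"older bound the second term in $Y$ by $C\epsilon(1+\|r\|_Y)$, hence in $L^2(K)$ by the same (up to constants) via the embedding $L^{\frac{2n}{n-2m}}(K)\hookrightarrow L^2(K)$, valid since $K$ is bounded and $\frac{2n}{n-2m}>2$. Passing to $|\zeta|\to\infty$ and then $\epsilon\to 0$ yields $\|r\|_{L^2(K)}\to 0$, completing the proof.
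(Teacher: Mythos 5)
Your reduction to the integral equation, and the final two steps (the $\mathcal{O}(1)$ bound and the $L^2(K)$ decay), are fine once a uniformly bounded solution $r\in Y$ has been produced, but the inversion of $I+T_\zeta$ on $Y=L^{\frac{2n}{n-2m}}(\R^n)$ --- which you correctly identify as the main obstacle --- has a genuine gap. The two estimates of Theorem \ref{thm_main_2} live in incompatible norms and cannot be combined into a single smallness/injectivity argument on $Y$ as you describe. Concretely, if $(I+T_\zeta)r=0$, i.e. $r=-G_\zeta^{(m)}(q_\flat r)-G_\zeta^{(m)}(q_\sharp r)$, then estimate (i) makes the first term of size $\mathcal{O}(|\zeta|^{-m})\|r\|_{L^2(\Omega)}$ in $L^2_\sigma$, while estimate (ii) makes the second term of size $\mathcal{O}(\varepsilon)\|r\|_Y$ in $Y$; to close the loop you must bound $\|r\|_Y$ by $\|r\|_{L^2(\Omega)}$, and the only available route, $\|r\|_Y\le C\|q_\flat r\|_{L^{2n/(n+2m)}}+C\varepsilon\|r\|_Y\le C\|q_\flat\|_{L^{n/m}}\|r\|_{L^2(\Omega)}+C\varepsilon\|r\|_Y$, introduces the constant $\|q_\flat\|_{L^{n/m}}$ (or $\|q_\flat\|_{L^\infty}$), which blows up as $\varepsilon\to0$ because $q$ is only in $L^{n/2m}$. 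The product $\varepsilon\|q_\flat\|_{L^{n/m}}$ is therefore not small and the contraction does not close. Note also that $Y\not\subset L^2_\sigma$ for the admissible range $\sigma>-m$, so you cannot absorb the $q_\sharp$ term into the weighted space either; and the compactness of $A_\zeta$ on $Y$, which you assert, would require tail control at infinity in $L^{\frac{2n}{n-2m}}(\R^n)$ that neither estimate of Theorem \ref{thm_main_2} provides.

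The paper circumvents exactly this difficulty by the symmetric factorization $q=d_1d_2$ with $d_1=|q|^{1/2}$, $d_2=q/|q|^{1/2}\in L^{n/m}\cap\mathcal{E}'$, seeking $r=G_\zeta^{(m)}(d_1v)$ and solving $(I+d_2G_\zeta^{(m)}d_1)v=-d_2$ on the single space $L^2(\R^n)$. Multiplication by $d_2$ maps $L^{\frac{2n}{n-2m}}\to L^2$ by H\"older, and multiplication by a bounded truncation $d_{2,\tau}$ maps $L^2_\sigma$ (on the compact support) into $L^2$; hence after splitting $d_j=d_{j,\tau}+(d_j-d_{j,\tau})$ both estimates (i) and (ii) return to $L^2$, the cross terms are $\mathcal{O}(\varepsilon)$ uniformly in $\zeta$ and the truncated term is $\mathcal{O}(|\zeta|^{-m})$, so $\|d_2G_\zeta^{(m)}d_1\|_{L^2\to L^2}\to0$ and a Neumann series applies --- no Fredholm alternative or compactness is needed. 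If you wish to keep your framework, injectivity of $I+T_\zeta$ on $Y$ can be recovered from this device (set $v:=d_2r\in L^2$, observe $(I+d_2G_\zeta^{(m)}d_1)v=0$, conclude $v=0$ and then $r=-G_\zeta^{(m)}(d_1v)=0$), but that is precisely the missing ingredient.
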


\begin{proof}
We follow the method of \cite{Lavine_Nachman} and  \cite{Dos_Santos_F_Kenig_Salo_unbound}, where the existence of complex geometric optics solutions for the Schr\"odinger operator $-\Delta+q$ with $q\in L^{\frac{n}{2}}(\Omega)$ was established.  Here the convolution operator  $G_\zeta^{(m)}$ introduced in \eqref{eq_solution_op} together with the estimates \eqref{eq_5_10} and \eqref{eq_5_13_0}  will play a crucial role.

Substituting \eqref{eq_4_2} into  \eqref{eq_4_1}, we get
\begin{equation}
\label{eq_4_3}
((-\Delta-2\zeta\cdot\nabla)^m +q)r=-q\quad \text{in}\quad  \R^n.
\end{equation}
Let us write
\[
q=d_1d_2,\quad d_1=|q|^{1/2},\quad d_2=q/|q|^{1/2}.
\]
We have $d_1,d_2\in L^{\frac{n}{m}}(\R^n)$ and $\|d_j\|_{L^{\frac{n}{m}}(\R^n)}=\|q\|^{1/2}_{L^{\frac{n}{2m}}(\R^n)}$, $j=1,2$. 

We shall look for a solution of \eqref{eq_4_3} in the form 
\begin{equation}
\label{eq_4_4_-1}
r=G_\zeta^{(m)} d_1 v.
\end{equation}
Thus, we have to solve
\begin{equation}
\label{eq_4_4_0}
(I+d_2G_\zeta^{(m)} d_1)v=-d_2\in (L^{\frac{n}{m}}\cap \mathcal{E}')(\R^n)\subset (L^{2}\cap \mathcal{E}')(\R^n),
\end{equation}
as $2m<n$.  To that end  we shall invert the operator $I+d_2G_\zeta^{(m)}d_1$ on $L^2(\R^n)$. 
Let us show that 
\begin{equation}
\label{eq_norm_invert}
\|d_2G_\zeta^{(m)} d_1 \|_{L^2(\R^n)\to  L^2(\R^n)}\to 0,  \quad |\zeta|\to \infty.
\end{equation} 
In doing so, we let, when $\tau>0$,
\[
d_{j,\tau}(x)=\begin{cases} d_j(x), & |d_j(x)|\le \tau,\\
0,& \text{otherwise},
\end{cases}\quad j=1,2.
\]
Thus, for each $\tau$, $d_{j,\tau}\in L^\infty(\R^n)$. Furthermore,   $d_{j,\tau}(x)\to d_j(x)$ almost everywhere as $\tau\to \infty$.  We also have 
$|d_{j,\tau}(x)|\le |d_j(x)|$, and therefore, by  dominated convergence, we get  $\|d_j-d_{j,\tau}\|_{L^{\frac{n}{m}}(\R^n)}\to 0$ as $\tau\to\infty$. 

For $f\in L^2(\R^n)$,  we write 
\begin{equation}
\label{eq_4_5}
\begin{aligned}
\|d_2G_\zeta^{(m)} d_1 f\|_{L^2(\R^n)}\le& \|d_{2,\tau} G_\zeta^{(m)} d_{1,\tau} f \|_{L^2(\R^n)}+ \|d_{2,\tau} G_\zeta^{(m)} (d_1-d_{1,\tau}) f \|_{L^2(\R^n)}\\
&+ \|(d_2-d_{2,\tau}) G_\zeta^{(m)} d_{1} f \|_{L^2(\R^n)}.
\end{aligned}
\end{equation}

Let us now estimate each term in the right hand side of \eqref{eq_4_5}. By H\"older's  inequality and \eqref{eq_5_13_0}, for each $\tau>0$, we obtain that 
\begin{equation}
\label{eq_4_7}
\begin{aligned}
\|d_{2,\tau} G_\zeta^{(m)} (d_1-d_{1,\tau}) &f \|_{L^2(\R^n)}\le \| d_{2,\tau}\|_{L^{\frac{n}{m}}(\R^n)}\| G_\zeta^{(m)} (d_1-d_{1,\tau}) f \|_{L^{\frac{2n}{n-2m}}(\R^n)}\\
&\le C \| d_{2,\tau}\|_{L^{\frac{n}{m}}(\R^n)}
\|  (d_1-d_{1,\tau}) f \|_{L^{\frac{2n}{n+2m}}(\R^n)}\\
&\le C \| q\|_{L^{\frac{n}{2m}}(\R^n)}^{1/2}
\|  (d_1-d_{1,\tau}) \|_{L^{\frac{n}{m}}(\R^n)}\|f\|_{L^2(\R^n)}.
\end{aligned}
\end{equation}
Similarly, for each $\tau>0$, we have
\begin{equation}
\label{eq_4_8}
\begin{aligned}
\|(d_2-d_{2,\tau}) G_\zeta^{(m)} d_{1} f \|_{L^2(\R^n)}\le C\|d_2-d_{2,\tau}\|_{L^{\frac{n}{m}}(\R^n)}
\|  q \|_{L^{\frac{n}{2m}}(\R^n)}^{1/2}\|f\|_{L^2(\R^n)}.
\end{aligned}
\end{equation}
Let $\varepsilon>0$.  Since $\|d_j-d_{j,\tau}\|_{L^{\frac{n}{m}}(\R^n)}\to 0$ as $\tau\to\infty$, it follows from \eqref{eq_4_7} and \eqref{eq_4_8} that there exists $\tau$ large such that 
\begin{equation}
\label{eq_4_6_comb}
\|d_{2,\tau} G_\zeta^{(m)} (d_1-d_{1,\tau}) \|_{L^2(\R^n)\to L^2(\R^n)}+ \|(d_2-d_{2,\tau}) G_\zeta^{(m)} d_{1} \|_{L^2(\R^n)\to L^2(\R^n)}\le 2\varepsilon/3. 
\end{equation}
Let us fix this $\tau$ and obtain the estimate for the first term  in the right hand side of \eqref{eq_4_5}.  Recall that $\supp d_{j,\tau}\subset \supp q:=L$ is compact.  Letting $-m<\sigma<1-m$ and using \eqref{eq_5_10},  we get 
\begin{equation}
\label{eq_4_6}
\begin{aligned}
\|&d_{2,\tau} G_\zeta^{(m)} d_{1,\tau} f \|_{L^2(\R^n)}\le C_{L}\|d_{2,\tau} G_\zeta^{(m)} d_{1,\tau} f \|_{L^2_\sigma(\R^n)}\\
&\le C_{L}\|d_{2,\tau}\|_{L^\infty(\R^n)}\|G_\zeta^{(m)}d_{1,\tau} f \|_{L^2_\sigma(\R^n)}\\
&\le \frac{C_L}{|\zeta|^m}  \|d_{2,\tau}\|_{L^\infty(\R^n)}   \| d_{1,\tau} f\|_{L^2_{\sigma+2m-1}(\R^n)}\le 
\frac{C_L \|d_{2,\tau}\|_{L^\infty(\R^n)}  \|d_{1,\tau}\|_{L^\infty(\R^n)} }{|\zeta|^m } \|f\|_{L^2(\R^n)}.
\end{aligned}
\end{equation}
Now it follows from  \eqref{eq_4_5},   \eqref{eq_4_6_comb} and \eqref{eq_4_6} that 
\[
\|d_2G_\zeta^{(m)} d_1 \|_{L^2(\R^n)\to  L^2(\R^n)}\le \varepsilon,
\]
for  $|\zeta|$ sufficiently large, which implies \eqref{eq_norm_invert}.  

In particular, $\|d_2G_\zeta^{(m)} d_1 \|_{L^2(\R^n)\to  L^2(\R^n)}\le 1/2$ when $|\zeta|$ sufficiently large, and  therefore, \eqref{eq_4_4_0} yields  that 
\[
v=-(I+d_2G_\zeta^{(m)} d_1)^{-1}d_2=-\sum_{j=0}^{\infty} (-d_2G_\zeta^{(m)} d_1)^{j} d_2.
\]
We have 
\[
\|v\|_{L^2(\R^n)}\le 2 \|d_2\|_{L^2(\R^n)}\le \mathcal{O}(1).
\]
Using  \eqref{eq_5_13_0}, from \eqref{eq_4_4_-1}, we obtain that 
\begin{equation}
\label{eq_4_9_0}
\|r\|_{L^{\frac{2n}{n-2m}}(\R^n)}\le C\|d_1 v\|_{L^{\frac{2n}{n+2m}}(\R^n)}\le C
\|d_1 \|_{L^{\frac{n}{m}}(\R^n)}\|v\|_{L^2(\R^n)}\le \mathcal{O}(1),
\end{equation}
for  $|\zeta|$ sufficiently large.  

Let $K\subset \R^n$ be a fixed compact set and let us write 
\begin{equation}
\label{eq_4_9}
\|r\|_{L^2(K)}\le \|G_\zeta^{(m)} d_{1,\tau} v\|_{L^2(K)}+ \|G_\zeta^{(m)} (d_1-d_{1,\tau}) v\|_{L^2(K)}.
\end{equation}
Using the inclusion $L^{\frac{2n}{n-2m}}(K)\subset L^2(K)$, the estimate \eqref{eq_5_13_0} and H\"older's inequality, we get 
\begin{align*}
\|G_\zeta^{(m)}& (d_1-d_{1,\tau}) v\|_{L^2(K)} \le C_K \|G_\zeta^{(m)} (d_1-d_{1,\tau}) v\|_{L^{\frac{2n}{n-2m}}(K)}\\
&\le 
C_K \|d_1-d_{1,\tau}\|_{L^{\frac{n}{m}}(\R^n)} \|v\|_{L^2(\R^n)}\le C_{K,q} \|d_1-d_{1,\tau}\|_{L^{\frac{n}{m}}(\R^n)} . 
\end{align*}
Let $\varepsilon>0$. As $\|d_1-d_{1,\tau}\|_{L^{\frac{n}{m}}(\R^n)}\to 0$ as $\tau\to\infty$, let us fix $\tau>0$ so that 
\begin{equation}
\label{eq_4_11}
\|G_\zeta^{(m)} (d_1-d_{1,\tau}) v\|_{L^2(K)} \le \varepsilon/2.
\end{equation} 

Now let $-m<\sigma<1-m$. Using \eqref{eq_5_10} and the fact that $\supp d_1\subset \supp q:=L$ is compact,  we obtain that 
\begin{equation}
\label{eq_4_10}
\begin{aligned}
\|G_\zeta^{(m)} d_{1,\tau} v\|_{L^2(K)}&\le C_K \|G_\zeta^{(m)} d_{1,\tau} v\|_{L^2_\sigma(\R^n)}\le 
\frac{C_K }{|\zeta|^m}   \| d_{1,\tau} v\|_{L^2_{\sigma+2m-1}(\R^n)}\\
&\le \frac{C_{K,L}}{|\zeta|^m} \|d_{1,\tau}\|_{L^\infty(\R^n)} \|v\|_{L^2(\R^n)}\le \frac{C_{K,L,q}}{|\zeta|^m}\le \frac{\varepsilon}{2},
\end{aligned}
\end{equation}
for $|\zeta|$ sufficiently large.
It follows from \eqref{eq_4_9}, \eqref{eq_4_11} and \eqref{eq_4_10} that 
\[
\|r\|_{L^2(K)}\to 0, \quad |\zeta|\to \infty.
\]
The proof is complete. 
\end{proof}

\begin{rem}
\label{rem_cgo}
Let us mention that $u|_\Omega\in H^{m}(\Omega)$ where $u$ is the complex geometric optics solution  given in \eqref{eq_4_2}.  Indeed, let $\tilde \Omega\subset \R^n$ be open bounded  such that $\Omega\Subset\tilde \Omega$. 
Then it follows from \eqref{eq_4_2_remainder} that $u|_{\tilde \Omega}\in L^{\frac{2n}{n-2m}}(\tilde \Omega)$. By H\"older's inequality, we have 
\[
\|qu\|_{L^{\frac{2n}{n+2m}}(\tilde \Omega)}\le \|q\|_{L^{\frac{n}{2m}}(\tilde \Omega)}\|u\|_{L^{\frac{2n}{n-2m}}(\tilde \Omega)},
\]
 and therefore, $(-\Delta)^m u=-qu\in L^{\frac{2n}{n+2m}}(\tilde \Omega)\subset H^{-m}(\tilde \Omega)$ by the Sobolev embedding, see \cite[Theorem 0.3.7]{Sogge_book}.  Hence, by elliptic regularity, $u\in H^{m}_{\text{loc}}(\tilde \Omega)$, and thus, $u\in H^m(\Omega)$. 
\end{rem}

\section{Proof of Theorem \ref{thm_main}}

\label{sec_proof_thm_main}

An application of Lemma \ref{lem_appendix_integral_iden} and the fact that $\Lambda_{q_1}=\Lambda_{q_2}$ give us  the following integral identity,
\begin{equation}
\label{eq_6_1}
\int_\Omega (q_2-q_1) u_1\overline{u_2}dx=0,
\end{equation}
for any solutions $u_1,u_2\in H^{m}(\Omega)$ of the equations 
\begin{equation}
\label{eq_6_2}
((-\Delta)^m+q_1)u_1=0 \quad\text{in}\quad\Omega,
\end{equation}
and
\begin{equation}
\label{eq_6_3}
((-\Delta)^m+\overline{q_2})u_2=0 \quad\text{in}\quad\Omega,
\end{equation}
respectively. 

Given $\xi\in \R^n$, we set 
\begin{align*}
\zeta_1&=s\eta_1+i\bigg(\frac{\xi}{2}+r\eta_2\bigg),\\
\overline{\zeta_2}&=-s\eta_1+i\bigg(\frac{\xi}{2}-r\eta_2\bigg),
\end{align*}
where $\eta_1,\eta_2\in \mathbb{S}^{n-1}$ satisfy $\xi\cdot\eta_1=\xi\cdot\eta_2=\eta_1\cdot\eta_2=0$
 and $\frac{|\xi|^2}{4}+r^2=s^2$.
The vectors are chosen so that $\zeta_j\cdot\zeta_j=0$, $j=1,2$, and $\zeta_1+\overline{\zeta_2}=i\xi$.  We also have $|\zeta_j|=\sqrt{2}s$, $j=1,2$. 

By Proposition \ref{prop_cgo}, for $s$ sufficiently large,  there exist complex geometric optics solutions,
\begin{align*}
u_1=e^{x\cdot\zeta_1}(1+r_1),\\
u_2=e^{x\cdot\zeta_2}(1+r_2),
\end{align*}
to the equations \eqref{eq_6_2} and \eqref{eq_6_3}, respectively, where the remainders $r_j$ satisfy
\begin{equation}
\label{eq_6_4}
\|r_j\|_{L^{\frac{2n}{n-2m}}(\R^n)}=\mathcal{O}(1), \quad s\to \infty,
\end{equation}
and for any compact set $K\subset\R^n$,
\begin{equation}
\label{eq_6_5}
\|r_j\|_{L^2(K)}\to 0, \quad s\to \infty.
\end{equation}

By Remark \ref{rem_cgo} we know that  $u_1,u_2\in H^{m}(\Omega)$.  Substituting $u_1$ and $u_2$ into the integral identity \eqref{eq_6_1}, we obtain that 
\begin{equation}
\label{eq_6_6}
\int_{\Omega} (q_2-q_1)e^{i\xi\cdot x}(1+r_1+\overline{r_2}+r_1\overline{r_2})dx=0.
\end{equation}

Let us show that 
\begin{equation}
\label{eq_6_7}
\int_{\Omega} (q_2-q_1)e^{i\xi\cdot x}(r_1+\overline{r_2}+r_1\overline{r_2})dx\to 0, \quad s\to \infty.
\end{equation}
To that end, we fix $\varepsilon>0$ and write $q=q_2-q_1$. Let $q^\sharp \in L^\infty(\Omega)$ be such that $\|q-q^\sharp\|_{L^{\frac{n}{2m}}(\Omega)}\le \varepsilon$.  
By H\"older inequality, \eqref{eq_6_4} and \eqref{eq_6_5},  we get
\begin{align*}
&\bigg| \int_{\Omega} q e^{i\xi\cdot x}(r_1+\overline{r_2}+r_1\overline{r_2})dx\bigg|\\
&\le C_\Omega \|q^\sharp\|_{L^\infty(\Omega)} (\|r_1\|_{L^2(\Omega)} +\|r_2\|_{L^2(\Omega)} + \|r_1\|_{L^2(\Omega)} \|r_2\|_{L^2(\Omega)} )+ C_\Omega \|q-q^\sharp\|_{L^{\frac{n}{2m}}(\Omega)} \\
&(\|r_1\|_{L^{\frac{2n}{n-2m}}(\Omega)}+ \|r_2\|_{L^{\frac{2n}{n-2m}}(\Omega)}+ \|r_1\|_{L^{\frac{2n}{n-2m}}(\Omega)}\|r_2\|_{L^{\frac{2n}{n-2m}}(\Omega)})\le \mathcal{O}(\varepsilon),
\end{align*}
for $s$ sufficiently large, which shows \eqref{eq_6_7}. 

Taking the limit as $s\to \infty$ in \eqref{eq_6_6}, we obtain that $q_1=q_2$ in $\Omega$.  The proof of Theorem \ref{thm_main} is complete.

\begin{appendix}
\section{Wellposedness of the Dirichlet problem for $(-\Delta)^m+q$ with potential $q\in L^{\frac{n}{2m}}$}
\label{sec_appendix}

Let $\Omega\subset\R^n$ be a bounded open set with $C^\infty$ boundary, and let $q\in L^{\frac{n}{2m}}(\Omega)$, $n>2m$.  

We have the following chain of continuous inclusions, where the first and the last ones follow from the Sobolev embedding theorem,  see \cite[Theorem 0.3.7]{Sogge_book},
\[
H^m(\Omega)\hookrightarrow L^{\frac{2n}{n-2m}}(\Omega)\hookrightarrow L^2(\Omega) \hookrightarrow L^{\frac{2n}{n+2m}}(\Omega)\hookrightarrow H^{-m}(\Omega).
\]

For $f=(f_0,\dots, f_{m-1})\in\prod_{j=0}^{m-1} H^{m-j -1/2}(\p \Omega)$, consider the following Dirichlet problem, 
\begin{equation}
\label{eq_7_1}
\begin{aligned}
((-\Delta)^m+q)u&=0\quad\textrm{in}\quad \Omega,\\
\gamma u&=f \quad\textrm{on}\quad \p\Omega.
\end{aligned}
\end{equation}
Here 
\[
\gamma:H^{m}(\Omega)\to  \prod_{j=0}^{m-1}H^{m-j-1/2}(\p \Omega),\quad \gamma u=(u|_{\p\Omega},\p_{\nu}u|_{\p \Omega},\dots,\p_{\nu}^{m-1}u|_{\p \Omega})
\]
is the Dirichlet trace of $u\in H^m(\Omega)$ on the boundary of $\Omega$, and $\nu$ is the exterior unit normal to the boundary. 

The purpose of this appendix is to use the standard variational arguments  to show the wellposedness of the problem \eqref{eq_7_1}. 
Consider first the inhomogeneous problem, 
\begin{equation}
\label{eq_7_1_inhom}
\begin{aligned}
((-\Delta)^m+q)u&=F\quad\textrm{in}\quad \Omega,\\
\gamma u&=0 \quad\textrm{on}\quad \p\Omega,
\end{aligned}
\end{equation}
with $F\in H^{-m}(\Omega)$.
Using the multinomial theorem, we write
\[
(-\Delta)^m=\sum_{|\alpha|=m} \frac{m!}{\alpha!} D^{2\alpha}.
\]
To define a natural  sesquilinear form $a$, associated to the problem \eqref{eq_7_1_inhom}, we let $u,v\in C^\infty_0(\Omega)$ and integrate by parts, 
\[
((-\Delta)^m+q) u, v)_{L^2(\Omega)}=  \sum_{|\alpha|=m}  \frac{m!}{\alpha!} \int_\Omega  D^\alpha u \overline{D^\alpha v} dx + \int_\Omega u q\overline{v} dx:=a(u,v).
\]
Notice that this is not a unique form, associate with the problem  \eqref{eq_7_1_inhom}. 

Using the Sobolev embedding $H^m(\Omega)\subset L^{\frac{2n}{n-2m}}(\Omega)$ and H\"older's inequality, we obtain that 
\begin{equation}
\label{eq_7_2_bounded}
\begin{aligned}
|a(u,v)|&\le \sum_{|\alpha|=m}  \frac{m!}{\alpha!} \| D^\alpha u\|_{L^2(\Omega)}  \| D^\alpha v\|_{L^2(\Omega)} + \|q\|_{L^\frac{n}{2m}(\Omega)}\|u\|_{L^\frac{2n}{n-2m}(\Omega)} \|v\|_{L^\frac{2n}{n-2m}(\Omega)}
\\
&\le C\|u\|_{H^m(\Omega)}\|v\|_{H^m(\Omega)}.
\end{aligned}
\end{equation}
Hence,  the sesquilinear form $a(u,v)$ extends to a bounded form on $H^m_0(\Omega)$.

Poincar\'e's inequality implies that for $|\beta|<m$, we have 
\[
\|D^\beta u\|_{L^2(\Omega)}\le C\sum_{|\alpha|=m}\|D^\alpha u\|_{L^2(\Omega)}, \quad u\in H^m_0(\Omega),
\]
and therefore, 
\begin{equation}
\label{eq_7_2}
\|u\|_{H^m(\Omega)}^2\le C\sum_{|\alpha|=m}\|D^\alpha u\|_{L^2(\Omega)}^2, \quad u\in H^m_0(\Omega).
\end{equation}
Using \eqref{eq_7_2}, and
writing $q=q^\sharp+(q-q^\sharp)$ with $q^\sharp\in L^\infty(\Omega)$ and $\|q-q^\sharp\|_{L^{\frac{n}{2m}}(\Omega)}$ small enough, we obtain that 
\begin{align*}
\text{Re}\, a(u,u)&\ge \sum_{|\alpha|=m}  \frac{m!}{\alpha!} \int_\Omega  |D^\alpha u|^2 dx - \int_\Omega |u|^2 |q| dx\\
&\ge c   \sum_{|\alpha|=m} \|D^\alpha u\|^2_{L^2(\Omega)} -\|q^\sharp\|_{L^\infty(\Omega)} \|u\|^2_{L^2(\Omega)}- \|q-q^\sharp\|_{L^{\frac{n}{2m}}(\Omega)} \|u\|_{L^\frac{2n}{n-2m}(\Omega)}^2\\
&\ge  (c/2)\|u\|_{H^m(\Omega)}^2 - C_0\|u\|^2_{L^2(\Omega)}, \quad c>0,\quad u\in H^m_0(\Omega). 
\end{align*}
Thus, the form $a(u,v)$ is coercive on $H^m_0(\Omega)$. As the inclusion map $H^m_0(\Omega)\hookrightarrow L^2(\Omega)$ is compact, the operator 
\begin{equation}
\label{eq_7_3}
(-\Delta)^m+q: H^m_0(\Omega)\to H^{-m}(\Omega)=(H^{m}_0(\Omega))'
\end{equation}
is Fredholm of index zero, see \cite[Theorem 2.34]{McLean_book}.

Furthermore, since the operator $(-\Delta)^m+q +C_0: H^m_0(\Omega)\to H^{-m}(\Omega)$ is positive, by an application of the Lax--Milgram lemma we conclude that it has a bounded inverse. As the embedding $H_0^m(\Omega)\hookrightarrow H^{-m}(\Omega)$ is compact, the operator \eqref{eq_7_3}, viewed as an operator on the Hilbert space $H^{-m}(\Omega)$, has a discrete spectrum. 

To study the well-posedness of \eqref{eq_7_1}, let us assume that 
\begin{itemize}

\item[(A)] $0$ is not in the spectrum of the operator \eqref{eq_7_3}.
\end{itemize}
Let $w\in H^m(\Omega)$ be such that $\gamma w=f$, see  \cite[Theorem 9.5, p. 226]{Grubbbook2009} for the existence of such $w$. Then   $u=v+w\in H^m(\Omega)$, where  $v\in H^m_0(\Omega)$ is the unique solution of the equation, 
\[
((-\Delta)^m+q) v=-((-\Delta)^m+q)w\in H^{-m}(\Omega),
\]
solves the Dirichlet problem \eqref{eq_7_1}.  Furthermore, the solution to the Dirichlet problem \eqref{eq_7_1} is unique.

Under the assumption (A), we define the Dirichlet--to--Neumann map, associated to \eqref{eq_7_1}, in the following way. Let $f, h\in \prod_{j=0}^{m-1}H^{m-j-1/2}(\p \Omega)$, and $v\in H^m(\Omega)$ be such that $\gamma v=h$. Then we set 
\begin{equation}
\label{eq_7_4}
\langle \Lambda_q f, \overline{h} \rangle=a(u,v)=\sum_{|\alpha|=m}  \frac{m!}{\alpha!} \int_\Omega  D^\alpha u \overline{D^\alpha v} dx + \int_\Omega u q\overline{v} dx,
\end{equation}
where $u\in H^m(\Omega)$ is the unique solution of the Dirichlet problem \eqref{eq_7_1}.

Let us now show that the definition \eqref{eq_7_4} of $\Lambda_q f$ is independent of the choice of an extension $v$ of $h$. To that end let $v_1,v_2\in H^m(\Omega)$ be such that $\gamma v_1=\gamma v_2=h$. Then we have to show that 
\begin{equation}
\label{eq_7_5}
\sum_{|\alpha|=m}  \frac{m!}{\alpha!} \int_\Omega  D^\alpha u \overline{D^\alpha (v_1- v_2)}dx + \int_\Omega u q(\overline{v_1}-\overline{v_2}) dx=0. 
\end{equation}
For any $w\in C^\infty_0(\Omega)$, as $u\in H^m(\Omega)$, we have 
\[
0=\langle ((-\Delta)^m+q)u, \overline{w} \rangle=\sum_{|\alpha|=m}  \frac{m!}{\alpha!}(-1)^m \int_\Omega  D^\alpha u D^\alpha \overline{w} dx + \int_\Omega u q \overline{w}dx.
\]
As $C_0^\infty(\Omega)$ is dense in $H^m_0(\Omega)$ and the form is continuous on $H^m_0(\Omega)$, we get \eqref{eq_7_5}.

It follows from \eqref{eq_7_2_bounded} that 
\[
|\langle \Lambda_q f, \overline{h} \rangle|\le C\|u\|_{H^m(\Omega)}\|v\|_{H^m(\Omega)}\le C\|f\|_{\prod_{j=0}^{m-1}H^{m-j-1/2}(\p \Omega)}\|h\|_{\prod_{j=0}^{m-1}H^{m-j-1/2}(\p \Omega)},
\]
where 
\[
\|h\|_{\prod_{j=0}^{m-1}H^{m-j-1/2}(\p \Omega)}=(\|h_0\|_{H^{m-1/2}(\p\Omega)}^2+\cdots+\|h_{m-1}\|_{H^{1/2}(\p\Omega)}^2)^{1/2}
\]
is the product norm on the space $\prod_{j=0}^{m-1}H^{m-j-1/2}(\p \Omega)$. 
Here we have used the fact that the extension operator $\prod_{j=0}^{m-1}H^{m-j-1/2}(\p \Omega)\ni h\mapsto v\in H^m(\Omega)$ is bounded, see \cite[Theorem 9.5, p. 226]{Grubbbook2009}.  
Hence, 
\[
\Lambda_q f\in \bigg(\prod_{j=0}^{m-1}H^{m-j-1/2}(\p \Omega)\bigg)'= \prod_{j=0}^{m-1}H^{-m+j+1/2}(\p \Omega)
\]
is well defined, and the operator 
\[
\Lambda_q: \prod_{j=0}^{m-1}H^{m-j-1/2}(\p \Omega) \to \prod_{j=0}^{m-1}H^{-m+j+1/2}(\p \Omega)
\]
is bounded.

The following integral identity is used in the proof of Theorem \ref{thm_main}. 
\begin{lem}
\label{lem_appendix_integral_iden}
Let $q_1,q_2\in L^{\frac{n}{2m}}(\Omega)$ and $\Lambda_{q_1}=\Lambda_{q_2}$. Then 
\begin{equation}
\label{eq_7_6_lem}
\int_{\Omega} (q_2-q_1)u_1 \overline{u_2}dx=0,
\end{equation}
for any solutions $u_1,u_2\in H^{m}(\Omega)$ of the equations $((-\Delta)^m+q_1)u_1=0$ in $\Omega$,  $((-\Delta)^m+\overline{q_2})u_2=0$ in $\Omega$, respectively. 
\end{lem}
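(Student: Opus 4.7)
The plan is to follow the standard recipe for such integral identities: exploit the fact that the definition of $\Lambda_q$ through the sesquilinear form $a_q$ is independent of the choice of Dirichlet extension, and combine this with the boundary equality $\Lambda_{q_1} = \Lambda_{q_2}$.

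First I would set $f = \gamma u_1 \in \prod_{j=0}^{m-1} H^{m-j-1/2}(\partial\Omega)$ and $h = \gamma u_2$. By well-posedness (assumption (A) for $q_2$), there exists a unique $v_2 \in H^m(\Omega)$ solving $((-\Delta)^m + q_2) v_2 = 0$ in $\Omega$ with $\gamma v_2 = f$. Writing $a_{q_j}(u, v) = \sum_{|\alpha|=m} \frac{m!}{\alpha!} \int_\Omega D^\alpha u \, \overline{D^\alpha v}\, dx + \int_\Omega u\, q_j\, \overline{v}\, dx$, the definition of the Dirichlet--to--Neumann map gives
\[
\langle \Lambda_{q_1} f, \overline{h} \rangle = a_{q_1}(u_1, u_2), \qquad \langle \Lambda_{q_2} f, \overline{h} \rangle = a_{q_2}(v_2, u_2),
\]
where in each case $u_2$ serves as a legitimate $H^m(\Omega)$-extension of $h$. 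Since $\Lambda_{q_1} = \Lambda_{q_2}$, we obtain $a_{q_1}(u_1, u_2) = a_{q_2}(v_2, u_2)$.

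Next I would translate the hypothesis $((-\Delta)^m + \overline{q_2}) u_2 = 0$ into a vanishing statement for the form $a_{q_2}(\cdot, u_2)$ on $H^m_0(\Omega)$. Testing the distributional equation $((-\Delta)^m + \overline{q_2}) u_2 = 0$ against $\overline{w}$ for arbitrary $w \in C_0^\infty(\Omega)$, integrating the principal part by parts (no boundary contribution since $w$ has vanishing Dirichlet data), and taking complex conjugates yields
\[
\sum_{|\alpha|=m} \frac{m!}{\alpha!} \int_\Omega D^\alpha w \, \overline{D^\alpha u_2}\, dx + \int_\Omega w\, q_2\, \overline{u_2}\, dx = 0,
\]
i.e.\ $a_{q_2}(w, u_2) = 0$. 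Density of $C_0^\infty(\Omega)$ in $H^m_0(\Omega)$ combined with continuity of $a_{q_2}$ (estimate \eqref{eq_7_2_bounded} applied with $q = q_2$) extends this to all $w \in H^m_0(\Omega)$.

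Now I would combine the two ingredients. Since $u_1$ and $v_2$ both have Dirichlet trace $f$, the difference $w := u_1 - v_2$ lies in $H^m_0(\Omega)$. Writing
\[
a_{q_1}(u_1, u_2) - a_{q_2}(v_2, u_2) = a_{q_2}(u_1 - v_2, u_2) + \int_\Omega u_1 (q_1 - q_2) \overline{u_2}\, dx,
\]
the first term on the right vanishes by the previous step, while the left-hand side vanishes by the equality of DN maps. This leaves $\int_\Omega (q_2 - q_1) u_1 \overline{u_2}\, dx = 0$, which is \eqref{eq_7_6_lem}.

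The only real subtlety I anticipate is the third step: keeping track of where complex conjugates appear when passing between the distributional equation $((-\Delta)^m + \overline{q_2}) u_2 = 0$ and the sesquilinear identity $a_{q_2}(w, u_2) = 0$. Once this bookkeeping is done correctly, the rest is pure linear algebra. No regularity issues arise because $u_1, u_2, v_2 \in H^m(\Omega)$ and $q_j \in L^{n/(2m)}(\Omega)$ combined with the Sobolev embedding $H^m(\Omega) \hookrightarrow L^{2n/(n-2m)}(\Omega)$ makes every integral appearing in the argument absolutely convergent via Hölder's inequality.
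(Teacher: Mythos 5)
Your proposal is correct and follows essentially the same route as the paper's own proof: both introduce the auxiliary solution $v_2$ with $\gamma v_2=\gamma u_1$, use $\langle\Lambda_{q_1}\gamma u_1,\overline{\gamma u_2}\rangle=\langle\Lambda_{q_2}\gamma v_2,\overline{\gamma u_2}\rangle$, and exploit the vanishing of the form $a_{q_2}(\cdot,u_2)$ on $H^m_0(\Omega)$ (the paper's identity \eqref{eq_7_6} tested with $\varphi=u_1-v_2$). The only difference is cosmetic bookkeeping in how the final subtraction is organized.
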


\begin{proof}
First as $u_2\in H^{m}(\Omega)$ satisfies the equation $((-\Delta)^m+q_2)\overline{u_2}=0$,     we have
\begin{equation}
\label{eq_7_6}
0=\langle \overline{u_2} , ((-\Delta)^m+q_2)\varphi \rangle=
\sum_{|\alpha|=m}  \frac{m!}{\alpha!}(-1)^m \int_\Omega  D^\alpha \overline{u_2} D^\alpha \varphi dx + \int_\Omega \overline{u_2} q_2 \varphi dx,
\end{equation}
for any $\varphi\in C_0^\infty(\Omega)$.  By density and continuity, \eqref{eq_7_6} remains valid for any $\varphi\in H_0^m(\Omega)$. 

Let $v_2\in H^{m}(\Omega)$ be such that 
\begin{align*}
((-\Delta)^m+q_2)v_2&=0\quad \text{in}\quad \Omega,\\
\gamma v_2&=\gamma u_1.
\end{align*}
Substituting $\varphi=u_1-v_2\in H^m_0(\Omega)$ into the identity \eqref{eq_7_6}, we get 
\begin{equation}
\label{eq_7_7}
\sum_{|\alpha|=m}  \frac{m!}{\alpha!} \int_\Omega  \overline{D^\alpha u_2} D^\alpha (u_1-v_2) dx + \int_\Omega \overline{u_2} q_2 (u_1-v_2)dx=0.
\end{equation}
From the equality $\langle \Lambda_{q_1}(\gamma u_1) , \overline{\gamma u_2} \rangle= \langle \Lambda_{q_2}(\gamma v_2) , \overline{\gamma u_2} \rangle$ we conclude that
\begin{equation}
\label{eq_7_8}
\sum_{|\alpha|=m}  \frac{m!}{\alpha!} \int_\Omega  \overline{D^\alpha u_2} D^\alpha (u_1-v_2) dx +
 \int_\Omega  (u_1q_1- v_2q_2)\overline{u_2} dx=0.
\end{equation}
Subtracting \eqref{eq_7_8} from \eqref{eq_7_7}, we obtain \eqref{eq_7_6_lem}. The proof is complete. 
\end{proof}
 
\end{appendix}

\section*{Acknowledgements}

The research of K.K. is partially supported by the
Academy of Finland (project 255580) and the National Science Foundation (DMS 1500703). K.K. would like to thank Alberto Ruiz  for some very helpful discussions.  The research of
G.U. is partially supported by the National Science Foundation.

\end{document}